\newcommand{\RR}{\mathbb R}
\newcommand{\NN}{\mathbb N}
\newcommand{\DerH}{\mathcal{D}_H}
\newcommand{\DiffH}{-_H}
\newtheorem{theorem}{Theorem}
\newtheorem{lemma}{Lemma}
\newtheorem{remark}{Remark}
\newtheorem{definition}{Definition}
\newtheorem{corollary}{Corollary}
\newtheorem{assumption}{Assumption}
\begin{document}

\title[Korneichuk--Stechkin Lemma, inequalities, and optimal recovery problems]{Korneichuk-Stechkin Lemma, Ostrowski and Landau inequalities, and optimal recovery problems  for  $L$-space Valued Functions}


\author{Vladyslav Babenko, Vira Babenko and Oleg Kovalenko}

\address{Vladyslav Babenko, Department of Mathematical Analysis and Theory of Functions, Oles Honchar Dnipro National University, Dnipro, Ukraine}
\email{babenko.vladislav@gmail.com}

\address{Vira Babenko, Department of Mathematics and Computer Science, Drake University, Des Moines, USA}
\email{vira.babenko@drake.edu}

\address{Oleg Kovalenko, Department of Mathematical Analysis and Theory of Functions, Oles Honchar Dnipro National University, Dnipro, Ukraine}
\email{olegkovalenko90@gmail.com}


\dedicatory{Dedicated to the memory of N.~P.~Korneichuk and S. B. Stechkin\\ on the occasion of their centenary}                    


\keywords{Korneichuk--Stechkin lemma; optimal recovery; $L$-space; Stechkin problem; Ostrowski and Landau type inequality}

\subjclass[2010]{41A65, 41A17, 46E40, 41A44}
\begin{abstract}
        We prove an analogue of the Korneichuk--Stechkin lemma for functions with values in $L$-spaces. As applications, we obtain sharp Ostrowski type inequalities and solve problems of optimal recovery of identity and   convexifying operators, as well as the problem of integral recovery,  on the classes of $L$-space valued functions with given majorant of modulus of continuity.
  The recovery is done based on  $n$ mean values of the functions  over some intervals. Moreover, on the classes of functions with
  given majorant of modulus of continuity of their Hukuhara type derivative, we solve the problem of optimal recovery of the function and the Hukuhara type derivative.
     The recovery is done based on $n$ values of the function. We also obtain some sharp Landau type inequalities and solve an analogue of the Stechkin problem about approximation of unbounded operators by bounded ones  and the problem of optimal recovery of an unbounded operator on a class of elements, known with error.
Consideration of $L$-space valued functions gives a unified approach to solution of the mentioned above extremal problems for the classes of multi- and fuzzy-valued functions as well as for the classes of functions with values in Banach spaces, in particular random processes, and many other classes of functions.

\end{abstract}
\maketitle


\section{Introduction}
Let $\omega$ be a modulus of continuity, i.~e. a non-decreasing continuous semi-additive function such that $\omega(0) = 0$. For a segment $[a,b]\subset \RR$ denote by $H^\omega([a,b],\mathbb{R})$ the class of functions $f\colon [a,b]\to \RR$ such that $|f(t)-f(s) |\leq \omega(|t-s|)$ for all $t,s\in [a,b]$. 
The moduli of continuity $\omega(\cdot)$, as independent functions with mentioned above properties, the classes $H^\omega([a,b],\mathbb{R})$, as well as the classes $W^rH^\omega([a,b],\mathbb{R})$, were introduced by Nikol'skii in~\cite{Nikolsky46}. 
For two {positive almost everywhere} and integrable functions $\psi_1\colon [a,a']\to\RR_+$ and $\psi_2\colon  [b',b]\to\RR_+, \;(a<a'\le b'<b)$, the Korneichuk--Stechkin lemma, see~\cite[\S~7.1]{ExactConstants}, gives an estimate for the functional
\begin{equation}\label{numericFunct}
(\psi_1,\psi_2)\to\sup\limits_{f\in H^\omega ([a,b],\RR)}\left|\int_a^{a'}f(t)\psi_1(t)dt-\int_{b'}^{b}f(t)\psi_2(t)dt\right|,
\end{equation}
which is sharp in the case of concave modulus of continuity $\omega$. 
This lemma was published in~\cite{Korneichuk59, Korneichuk61}, (see also a remark in~\cite{Korneichuk59}) for the classes 
$H^\omega([a,b],\RR)$ with $\omega(t)=t^\alpha$, $0<\alpha\le 1$, and was generalized to the case of arbitrary modulus of continuity in~\cite{Korneichuk62}. The Korneichuk--Stechkin lemma played an important role in the solution of many extremal problems of approximation theory, see~\cite[Chapter~7]{ExactConstants}
and references therein. Some of its generalizations and more applications can be found in~\cite{bagdasarov2012,stepanets2018}.

The theory of Banach space valued, multi-valued and fuzzy-valued functions was actively developed over the last several decades (see~\cite{Aubin,Borisovich,Diamond2}), in particular, due to its applications in optimization theory, approximation theory, mathematical economics, numerical analysis and other branches of applied mathematics. Some results on approximation of multi- and fuzzy- valued functions can be found in~\cite{nira2014,anastassiou2010}.

 Banach spaces, spaces of sets and spaces of fuzzy sets belong to the class of so-called $L$ spaces (i.~e. semi-linear metric spaces with two additional axioms, which connect the metric with the algebraic operations). The notion of an $L$-space was introduced in~\cite{Vahrameev}, see also~\cite{Aseev}. In Section~\ref{s::LSpace} we present necessary definitions and facts related to $L$-spaces. In particular, for the sake of completeness, we present the definition and some properties of the Lebesgue integral for bounded $L$-space valued functions.

In  Section~\ref{s::KSLemma} we generalize the Korneichuk--Stechkin lemma to the case of $L$-space valued functions. 
Let $(X,h_X)$ be an $L$-space and $H^\omega([a,b],X)$ be the class of functions $f\colon [a,b]\to X$ 
such that $h_X(f(t'),f(t''))\le \omega(|t'-t''|)$ for all  $t',t''\in [a,b]$. Let also $\psi_1\colon[a,a']\to \RR$ and $\psi_2\colon [b',b]\to \RR$, $a<a'\le b'<b,$ be positive almost everywhere, measurable, and bounded functions. We obtain, see Lemma~\ref{l::KSLemma}, an estimate for the functional
\begin{equation}\label{LspacevaluedFunct}
S(\psi_1,\psi_2):=\sup\limits_{f\in H^\omega ([a,b],X)}h_X\left(\int_a^{a'}f(t)\psi_1(t)dt,\int_{b'}^{b}f(t)\psi_2(t)dt\right),
\end{equation}
which is sharp in the case of concave modulus of continuity $\omega$.
 In a series of applications that we consider in this article, we show that our generalization may be an important tool for solution of extremal problems involving $L$-space valued functions.

In Section 4 we obtain a general estimate of the functional~\eqref{LspacevaluedFunct} for rather arbitrary functions $\psi_1$ and $\psi_2$ in terms of the Korneichuk $\Sigma$-rearrangement of the function $\Psi(t)=\int_a^t(\psi_1(u)-\psi_2(u))du$. This estimate generalizes the estimate for the functional~\eqref{numericFunct}, obtained by Korneichuk in~\cite{Korneichuk71}, see also~\cite[Theorem~7.1.9]{ExactConstants}. 

In 1938 Ostrowski~\cite{Ostrowski38} proved  a sharp inequality that estimates the deviation of a value of a function  from its mean value using the uniform norm of the function's derivative.
Such inequalities 
have been intensively studied, see~\cite{Dragomir17} for a survey of the obtained results. 
It is worth noting, that the general estimate for the functional~\eqref{numericFunct}, which was obtained by Korneichuk some~50 years ago, essentially contains a series of results on the Ostrowski type inequalities, which were obtained much later. In particular, from this estimate, one can easily obtain the main result from~\cite{barnet} and one of the results in~\cite{Guessab02}. In~\cite{Barnett02,Chalco12, Chalco15, Anastassiou03,Roman18,Zhao19,Barnett01,Dragomir03,Anastassiou12} such type of inequalities 
are investigated for non-real-valued functions. The obtained in this article estimate for functional~\eqref{LspacevaluedFunct} implies some of the main results in  papers~\cite{Barnett02,Anastassiou03, Anastassiou12, Chalco12}.

An important part of approximation theory and optimal algorithms theory is theory of optimal recovery of operators. 
Statements of the problems of this theory, many results and further references can be found in monographs~\cite{TraubWozhniakowski, Zhensikbaev03}.
We consider the optimal recovery problem in the following statement.
 
Let a metric space $(X, h_X)$, sets  $Z$, $Y$, a class of elements $W\subset Z$, as well as mappings $\Lambda \colon Z\to X$ and $I\colon W\to Y$  be given. We call an arbitrary mapping $\Phi\colon Y\to X$ a method of recovery of the mapping $\Lambda$ on the class $W$ based on the information given by the mapping $I$. The error of recovery of the mapping $\Lambda$ on the class $W$ by the method $\Phi$ based on the information given by the mapping $I$ is given by the formula
$$
{\mathcal E}(\Lambda,W,I,\Phi, X)={\sup\nolimits_{z\in W}h_X(\Lambda(z), \Phi(I(z)))}.
$$
The quantity 
\begin{equation}\label{errorOfRecovery}
    {\mathcal E}(\Lambda,W, I, X)=\inf\nolimits_{\Phi} {\mathcal E}(\Lambda,W,I,\Phi, X)
\end{equation}
is called the optimal error of recovery of the mapping $\Lambda$ on the class $W$ based on the information given by the mapping $I$.
The problem of optimal recovery  of the mapping $\Lambda$ on the class  $W$ with the information given by $I$ in the metric of the space $X$ is to find quantity \eqref{errorOfRecovery}
and a method $\Phi^*$ (if such a method exists) on which the infimum in the right-hand side of \eqref{errorOfRecovery} is attained.
If $\mathcal{I}$ is some class of information operators, then it is also of interest to find the quantity
$$
    {\mathcal E}(\Lambda,W, {\mathcal{I}}, X)=\inf\nolimits_{I\in\mathcal{I}} {\mathcal E}(\Lambda,W,I, X)
$$
and the best information operator.

In Section~\ref{s::HOmegaRecovery} we consider the problem of optimal recovery of the convexifying operator (see Section~\ref{s::LSpace} for definitions)
and of the integral on the class $H^\omega([a,b],X)$. Under some additional assumptions (in particular in the case of Banach space valued functions), the convexifying operator turns into the identity operator.
For real-valued functions, these problems are well studied when the informational operator maps a function from the class to its values at $n$ points of the segment $[a,b]$. Regarding recovery of a function we refer to~\cite[Chapter~5,6]{SplinesInApproxTh}; regarding the recovery of the integral we refer to~\cite{Korneichuk68}. In~\cite{Babenko15} the problem of optimization of  approximate integration was solved for the class of multi-valued functions.
Here as informational operators, we use the ones that map functions from the class to their mean values on $n\in\NN$ intervals belonging to $[a,b]$. This kind of information operators is of interest, since the analog measuring devices give such mean values of the measured functions. Moreover, the results on optimal recovery given such type of information, easily imply corresponding results on optimal recovery for the case, when the information operators map functions to their values at $n$ points of the interval $[a,b]$. 
The problem of optimization of approximate integration given the ''interval'' information for the functions from the class $H^\omega([a,b],\RR)$ was solved in~\cite{Borodachov98}.
Since  a random process can be viewed as a function into a Banach space of random variables, our results can be applied to recovery problems for random processes. Some results in this direction can be found in~\cite{Drozhina, Kumar, Kovalenko20}.

In Section~\ref{s::PolylineApproximation} we consider the problem of optimal recovery of the identity operator and the operator $\DerH$ of Hukuhara type derivative on the class $W^1H^\omega([a,b],X)$ (see Section~\ref{s::PolylineApproximation} for the definitions).
In these problems the recovery is done based on the  information operator that maps a function to its values at $n$ points of the interval $[a,b]$.
We again refer to~\cite[Chapter~5,6]{SplinesInApproxTh} for the results on optimal recovery of functions on the class $W^1H^\omega([a,b],\RR)$ and its periodic analog, as well as on optimal recovery of the derivative of the functions on these classes. 

In Section~\ref{s::StechkinPr}, we obtain sharp inequalities of Landau type for divided differences of Hukuhara type as well as for derivatives of Hukuhara type of the functions from the classes $\overline{W}^1H^\omega([a,b],X)=\bigcup_{k>0}k\cdot W^1H^\omega([a,b],X)$. 
Many known results on Landau and Landau--Kolmogorov type inequalities can be found in ~\cite{BKKP, Mitrinovich, Arestov}. For the functions with values in Banach spaces, some inequalities were obtained in~\cite{ANASTASSIOU2012312,XIAO}; 
for the $L$-spaces valued functions defined on $\RR$ or $\RR_+$ --- in~\cite{VeraBabenko_JANO}.

Inequalities of such type are intimely connected to the Stechkin problem about approximation of an operator by the ones with smaller norm, in particular approximation of unbounded operators by bounded ones. The problem was first stated in~\cite{Stechkin}, where the first results on the solution of the problem were obtained. Information on further results can be found in~\cite{Arestov,BKKP}. In~\cite{VeraBabenko_JANO}, a generalization of the Stechkin problem for the case of unbounded operators acting in $L$-spaces was proposed; some results about approximation of Hukuhara type derivatives by Lipschitz operators on the classes $W^1H^\omega(J,X)$, where $J=\RR$ or $J=\RR_+$ were obtained. Here we consider this problem for the operator of Hukuhara type divided difference and the Hukuhara type derivative. We also consider the problem of optimal recovery of the operator $\DerH$ on the class $W^1H^\omega([a,b],X)$ in the case, when the elements of the class are known with error. Known results and further references can be found in \cite{Arestov,BKKP,VeraBabenko_JANO}.

\section{$L$-spaces}\label{s::LSpace}
\subsection{Definitions}

\begin{definition}
A set $X$ is called a semilinear space, if  operations of addition of elements and their multiplication on real numbers are defined in $X$, and the following conditions are satisfied for all $x,y,z\in X$ and $\alpha,\beta\in\mathbb{R}$:
\begin{gather*}
x+y=y+x;
\\ x+(y+z)=(x+y)+z;
\\ \exists\; \theta\in X\colon x+\theta=x;
\\ \alpha(x+y)=\alpha x +\alpha y;
\\ \alpha(\beta x)=\left(\alpha\beta\right)x;
\\ 1\cdot x=x,\; 0\cdot x=\theta.
\end{gather*}
\end{definition}

\begin{definition}
We call an element $x\in X$ convex, if for all $\alpha,\beta\geq 0$, $(\alpha + \beta)x=\alpha x+ \beta x.$
Denote by $ X^{\rm c}$ the subspace of all convex elements of the space $X$.
\end{definition}
\begin{remark}
Some authors (see e.~g.~\cite{Borisovich}) include into the axioms of a semi-linear space the requirement $X=X^{\rm c}$.
\end{remark}

\begin{definition}
A  semilinear space $X$, endowed with a metric $h_X$, is called an $L$-space, if it is complete and separable and for all $x,y,z\in X$, and $\alpha\in\mathbb{R}$
$$
h_X(\alpha x,\alpha y)=|\alpha|h_X(x,y);
$$
\begin{equation}\label{ax::LSemiIsotropic}
h_X(x+z,y+z)\leq h_X(x,y).
\end{equation}
\end{definition}

\begin{remark}
It follows from the triangle inequality and~\eqref{ax::LSemiIsotropic}, that 
$$
 \forall x,y,z,w\in X\;\;\; h_X(x+z,y+w)\leq h_X(x,y)+h_X(z,w). 
$$

\end{remark}

\begin{definition}
An $L$-space $X$ is called isotropic, if inequality~\eqref{ax::LSemiIsotropic} turns into equality for all $x,y,z\in X$.
\end{definition}

Next we list some of the examples of $L$-spaces. More details can be found in~\cite{babenko19}. 
Arbitrary separable Banach space and arbitrary complete and separable quasilinear normed space (see~\cite{Aseev}) are $L$-spaces. The space  $\Omega(X)$ of non-empty compact subsets of a separable Banach space $X$ endowed with usual Hausdorff metric, the space $\Omega_{\rm conv}(X)$ of convex elements from $\Omega(X)$, and spaces of fuzzy sets (see e.~g.~\cite{Diamond2}) are also examples of $L$-spaces. All $L$-spaces mentioned above are isotropic.

An example of a non-isotropic $L$ space can be built as follows. Let $X = [0,\infty)$, for $\lambda\in\RR$, $x,y\in X$ set $x\bigoplus y = \max\{x,y\}$, $\lambda\bigodot x = |\lambda|x$. Then $(X,\bigoplus,\bigodot)$ with the metric $h_X(x,y) = |x-y|$, $x,y\in X$, is a non-isotropic $L$-space.

A function $f\colon [a,b]\to X$ is said to be measurable, if 
for any element $x\in X$ the real-valued function $h_X(f(t),x)$ is measurable.
For $[a,b]\subset \RR$ and an $L$-space $(X,h_X)$, denote by $C([a,b],X)$ and  $B([a,b],X)$ the spaces of continuous (resp. bounded and measurable) functions $f\colon [a,b]\to X$ with the metrices 
$$h_{C([a,b],X)}(f,g):= \max\limits_{t\in [a,b]} h_X(f(t),g(t)) \;\;\text{and}\;\;
h_{B([a,b],X)}(f,g):= \sup\limits_{t\in [a,b]} h_X(f(t),g(t)).$$

\subsection{Hukuhara type derivative}
The notion of the Hukuhara difference of two sets was introduced in~\cite{Hukuhara}. 
\begin{definition}
Let $X$ be an $L$-space. We say that $z\in X$ is the Hukuhara type difference of $x,y\in X$, if $x=y+z$.
We denote this difference by $z=x\DiffH y.$
\end{definition}
Note, that in an isotropic $L$-space the Hukuharu difference $x\DiffH y$ is unique, provided it exists. On the other hand, in a non-isotropic $L$-space, uniqueness is not guaranteed. For example, in the space $(X,\bigoplus,\bigodot)$, for arbitrary $x\in X$ the difference $x\DiffH x$ exists and is not unique for each $x\neq 0$. 
Everywhere below, when we consider Hukuharu differences, we assume that the $L$-space is isotropic.
\begin{definition} 
If $t\in(a,b)$, and for all small enough $\gamma>0$ there exist differences $f(t+\gamma)\DiffH f(t)$ and $f(t)\DiffH f(t-\gamma)$, and both limits $\lim\limits_{\gamma\to +0} \gamma^{-1}(f(t+\gamma)\DiffH f(t))$ and $\lim\limits_{\gamma\to +0} \gamma^{-1}(f(t)\DiffH f(t-\gamma))$ exist and are equal to each other, then the function $f$ has a Hukuhara type derivative $\DerH f(t)$ at the point $t$ (if $t=a$ or $t=b$ then there exists only one limit) and
$$
\DerH f(t):=\lim\limits_{\gamma\to +0} \gamma^{-1}(f(t+\gamma)\DiffH  f(t)).
$$
\end{definition}
One can find properties of Hukuhara type differences and elements of calculus based on Hukuhara type difference and derivative in $L$-spaces in~\cite{babenko19}.
\subsection{Integration in $L$-spaces}
For completeness we present the definition and some of the properties of the Lebesgue integral for the functions $f\in B([a,b],X)$, where $X$ is an $L$-space (see~\cite{Vahrameev} and~\cite[\S 5]{Aseev}). 
First, we recall the definition of a convexifying operator.
\begin{definition}\label{def::convexifyingOperator}
A surjective operator $P\colon X\to X^{\rm c}$ is called convexifying, if 
\begin{gather*}
h_X(P(x),P(y))\le h_X (x,y) \text{ for all } x,y\in X; \\
P\circ P = P\notag; \\
P(\alpha x + \beta y) = \alpha P(x)+ \beta P(y)\text{ for all }x,y\in X \text{ and }\alpha,\beta\in\RR.
\end{gather*}
\end{definition}

The operator  ${\rm conv}\colon\Omega(\mathbb{R}^m)\to \Omega(\mathbb{R}^m)$ that maps each $x\in \Omega(\mathbb{R}^m)$ to its convex hull ${\rm conv}\,x$ is an example of a  convexifying operator.

A mapping $f$ is called simple, if it has a finite number of values $\left\{f_k\right\}_{k=1}^n$ on pairwise disjoint measurable sets $\left\{T_k\right\}_{k=1}^n$, $n\in\NN$. The Lebesgue integral of a simple mapping $f$ is by definition
$$
\int_a^b f(s)ds :=\sum\nolimits_{i=1}^n P(f_i)\mu(T_i),
$$
where $\mu$ is the Lebesgue measure. 
The following properties hold for simple  $f,g$.
\begin{enumerate}
\item For all $\alpha,\beta\in \RR$
\begin{equation}\nonumber
\int_a^b \left(\alpha f(t)+\beta g(t)\right) dt=\alpha \int_a^b f(t) dt+\beta \int_a^b g(t) dt.
\end{equation}

\item The function $t\to h_X \left(f(t), g(t)\right)$ is integrable and
    \begin{equation}\nonumber
    h_X\left(\int_a^b f(t) dt, \int_a^b g(t) dt\right)\leq \int_a^b h_X \left(f(t),g(t)\right)dt.
    \end{equation}

\item The function $P(f(\cdot))$ is integrable and
\begin{equation}\nonumber
\int_a^b f(t) dt = P\left( \int_a^b f(t) dt\right) = \int_a^b P(f(t)) dt.
\end{equation}
\item For disjoint measurable sets $T_1$ and $T_2$ such that $[a,b]=T_1\cup T_2$
$$
\int_a^b f(t) dt=\int_{T_1} f(t) dt+\int_{T_2} f(t) dt.
$$
\end{enumerate}

Any function $f\in B([a,b],X)$ is a uniform limit of a sequence $\{f^k\}$ of simple functions.  Using standard arguments, one can prove that the sequence $\left\{\int_a^bf^k(t)dt\right\}_{k\in\NN}$ is fundamental. By definition, the integral $\int_a^bf(t)dt$ of the function $f$ is set to be the limit of this sequence.

 It is clear that properties 1
 -- 4 of the Lebesgue integral for simple functions hold for arbitrary functions from $B([a,b],X)$.
 Moreover, if $\rho$ is an absolutely continuous strictly monotone function from $[c,d]\subset \RR$ onto $[a,b]\subset\RR$, then for all $f\in B([a,b], X)$,
 $$\int_{a}^bf(t)dt = \int_{c}^df(\rho(s))\rho'(s)ds.
 $$
 Indeed, from Properties~1--4 and the possibility to change variables in the integral for real-valued functions, it follows, that this property holds for the case, when $f$ is simple. The general case can be obtained using the limiting procedure.

 Note, that in the case of $X$ being a Banach space, the integral becomes the Bochner integral, see~ \cite[Sections~3.7-3.8]{hille}; in the case $X=\Omega( {\RR^m})$, the integral coincides with the Aumann integral, see~\cite[Theorem~12]{Aseev}.

\subsection{Some Properties of L-spaces}
\begin{definition}
We say that an element $x\in X$ is invertible, if there exists an element $x'\in X$ such that $x+x'=\theta$. In this case the element $x'$ is called the inverse to $x$. Denote by $X^{\rm inv}$ the set of all invertible elements of the space $X$. 
\end{definition}

\begin{assumption}
In what follows we assume that $X^{\rm inv}\cap X^{\rm c}\neq\{\theta\}$.
\end{assumption}
In the space $\Omega(X)$ any element of the form $\{x\},$ $x\in X$, is convex and invertible. 

We need the following lemmas, see~\cite{VeraBabenko_JANO}.

\begin{lemma}
If $x\in X^{\rm inv}$, then its inverse element $x'$ is unique.
\end{lemma}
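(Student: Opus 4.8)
The plan is to give a purely algebraic argument that relies only on the semilinear-space axioms (commutativity and associativity of addition together with the neutral element $\theta$) and does not use the metric at all. Assume that $x'$ and $x''$ are both inverse to $x$, i.e. $x+x'=\theta$ and $x+x''=\theta$. First I would write $x'=x'+\theta$, then substitute $\theta=x+x''$ into the second summand, regroup using associativity, and use $x'+x=x+x'=\theta$ followed by $\theta+x''=x''$:
$$
x'=x'+\theta=x'+(x+x'')=(x'+x)+x''=(x+x')+x''=\theta+x''=x''.
$$
This already proves the lemma.

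Two minor points have to be handled with care. The axioms, as listed, only state $x+\theta=x$, so the step $\theta+x''=x''$ should be justified by commutativity; and one should observe that $\theta$ is a single element common to all of $X$ (it equals $0\cdot x$ for any $x$ by the axiom $0\cdot x=\theta$), so that the two equations $x+x'=\theta$ and $x+x''=\theta$ really do share the same right-hand side, which is what legitimizes routing the computation through $\theta$.

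I do not anticipate a genuine obstacle here: the only pitfall is the temptation to invoke a cancellation law --- deducing $x'=x''$ directly from $x+x'=x+x''$ --- which is \emph{not} available in a general $L$-space. Indeed, as the remark preceding the lemma shows, cancellation may fail in non-isotropic $L$-spaces, and this is exactly why uniqueness of the Hukuhara-type difference $x\DiffH y$ requires isotropy. The content of the present lemma is that for \emph{invertible} elements cancellation is never needed: the group-like relation $x+x'=\theta$ together with commutativity and associativity suffices, so the statement holds in every $L$-space, isotropic or not (in fact in every semilinear space). In the isotropic case one could alternatively argue metrically, $h_X(x',x'')=h_X(x+x',x+x'')=h_X(\theta,\theta)=0$, using the equality version of~\eqref{ax::LSemiIsotropic}, but the algebraic proof is both shorter and more general, so that is the one I would write out.
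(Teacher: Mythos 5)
Your argument is correct: it is the standard monoid argument for uniqueness of inverses, $x'=x'+\theta=x'+(x+x'')=(x+x')+x''=\theta+x''=x''$, using only commutativity, associativity, and the neutral element, and you rightly note that no cancellation law is invoked. The paper itself omits the proof of this lemma (citing an external reference), and the proof it points to can only be this same elementary computation, so there is nothing substantive to compare; your added observation that this is precisely why inverse uniqueness holds in every $L$-space while uniqueness of a general Hukuhara difference requires isotropy is accurate and consistent with the paper's remarks.
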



\begin{lemma}
If $x\in X^{\rm inv}\cap X^{\rm c}$, then $x'\in X^{\rm c}$. 
\end{lemma}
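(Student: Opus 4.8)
If $x \in X^{\rm inv} \cap X^{\rm c}$, then $x' \in X^{\rm c}$.

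The goal is to show that the inverse element $x'$ is convex, i.e. that $(\alpha+\beta)x' = \alpha x' + \beta x'$ for all $\alpha,\beta \ge 0$. The natural strategy is to exploit the fact that $x$ itself is convex together with the defining relation $x + x' = \theta$, and to use the uniqueness of inverses (the previous lemma). First I would fix $\alpha,\beta \ge 0$ and consider the element $\alpha x' + \beta x'$. I want to identify this as the inverse of some element, and the obvious candidate is $(\alpha+\beta)x$. So the key computation is to verify that
\[
(\alpha+\beta)x + (\alpha x' + \beta x') = \theta.
\]
Using distributivity of scalars over the (semilinear) addition and commutativity/associativity, the left-hand side equals $\alpha x + \beta x + \alpha x' + \beta x' = \alpha(x + x') + \beta(x + x') = \alpha\theta + \beta\theta = \theta$, where I used $x + x' = \theta$ and $\gamma\theta = \theta$ for $\gamma \ge 0$ (which follows from $0\cdot x = \theta$ and the scalar axioms, or directly). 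Hence $\alpha x' + \beta x'$ is an inverse of $(\alpha+\beta)x$.

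On the other hand, since $x$ is convex, $(\alpha+\beta)x = \alpha x + \beta x$, and I claim $(\alpha+\beta)x'$ is the inverse of $(\alpha+\beta)x$ as well: indeed $(\alpha+\beta)x + (\alpha+\beta)x' = (\alpha+\beta)(x + x') = (\alpha+\beta)\theta = \theta$. Now by the uniqueness lemma, the inverse of $(\alpha+\beta)x$ is unique, so $(\alpha+\beta)x' = \alpha x' + \beta x'$. Since $\alpha,\beta \ge 0$ were arbitrary, $x' \in X^{\rm c}$, completing the proof.

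The only subtlety — and the place where one must be slightly careful — is the handling of scalar multiplication by $0$ and the identity $\gamma\theta = \theta$ for $\gamma \ge 0$, since in a semilinear space one does not have additive inverses in general and must argue purely from the listed axioms ($1\cdot x = x$, $0\cdot x = \theta$, $\alpha(\beta x) = (\alpha\beta)x$, $\alpha(x+y)=\alpha x + \alpha y$). From $\gamma\theta = \gamma(0\cdot x) = (\gamma\cdot 0)x = 0\cdot x = \theta$ this is immediate, so there is no real obstacle; the argument is essentially a two-line manipulation plus an invocation of uniqueness of inverses. I do not expect any genuine difficulty here — the lemma is a routine consequence of the axioms and the preceding uniqueness lemma, and is presumably stated only because it is needed later (e.g. to make sense of Hukuhara-type differences landing in $X^{\rm c}$).
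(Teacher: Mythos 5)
Your argument is correct: both $(\alpha+\beta)x'$ and $\alpha x'+\beta x'$ are inverses of $(\alpha+\beta)x=\alpha x+\beta x$, so uniqueness of inverses forces them to coincide, and the auxiliary identity $\gamma\theta=\theta$ is justified properly from the axioms. The paper itself gives no proof (it cites an external reference for this lemma), but your route is the standard one and there is nothing to object to.
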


\begin{lemma}\label{l::distBetweenConvexElemMultipliers}
For all $x\in X^c$ and  $\alpha, \beta\in \mathbb{R}$, $
h_X(\alpha x, \beta x)\leq |\alpha -\beta |\cdot h_X (x,\theta).
$
If $X$ is isotropic and $\alpha\cdot\beta\geq 0$, then the inequality becomes equality.
\end{lemma}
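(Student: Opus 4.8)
The plan is to reduce the statement to the case $\alpha,\beta\ge 0$ and then use the convexity of $x$ together with the two metric axioms of an $L$-space. First I would record two elementary facts. The semilinear axioms give $\gamma\theta=\gamma(0\cdot x)=(\gamma\cdot 0)x=0\cdot x=\theta$ for every $\gamma\in\RR$, hence $h_X(\gamma x,\theta)=h_X(\gamma x,\gamma\theta)=|\gamma|\,h_X(x,\theta)$; and taking multiplier $-1$ in the homogeneity axiom yields $h_X(-u,-v)=h_X(u,v)$ for all $u,v\in X$.

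Next comes the core case $\alpha\ge\beta\ge 0$. Since $x\in X^{\rm c}$ and $\alpha-\beta\ge 0$, $\beta\ge 0$, convexity gives $\alpha x=(\alpha-\beta)x+\beta x$. Adding $\beta x$ to both arguments and invoking~\eqref{ax::LSemiIsotropic},
\[
h_X(\alpha x,\beta x)=h_X\big((\alpha-\beta)x+\beta x,\,\theta+\beta x\big)\le h_X\big((\alpha-\beta)x,\theta\big)=(\alpha-\beta)\,h_X(x,\theta),
\]
which is exactly the asserted inequality since here $|\alpha-\beta|=\alpha-\beta$. If $X$ is isotropic the middle step is an equality, so $h_X(\alpha x,\beta x)=|\alpha-\beta|\,h_X(x,\theta)$.

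Then I would dispose of the remaining sign patterns. If $\alpha,\beta\le 0$, apply $h_X(-u,-v)=h_X(u,v)$ with $u=(-\alpha)x$ and $v=(-\beta)x$; this reduces matters to the core case for the nonnegative numbers $-\alpha$ and $-\beta$, and since $|(-\alpha)-(-\beta)|=|\alpha-\beta|$ both the inequality and (in the isotropic case) the equality carry over. Together with the core case this settles the equality claim for all $\alpha\beta\ge 0$. Finally, if $\alpha\beta<0$ only the inequality is asserted, and it follows from the triangle inequality together with the first elementary fact:
\[
h_X(\alpha x,\beta x)\le h_X(\alpha x,\theta)+h_X(\theta,\beta x)=\big(|\alpha|+|\beta|\big)\,h_X(x,\theta)=|\alpha-\beta|\,h_X(x,\theta),
\]
the last equality holding because $\alpha$ and $\beta$ have opposite signs.

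I do not expect any real difficulty here; the only points deserving attention are that the hypothesis $x\in X^{\rm c}$ is used precisely once, to split $\alpha x=(\alpha-\beta)x+\beta x$, so that the argument genuinely fails for non-convex $x$, and that in the opposite-sign case the crude triangle bound happens to equal $|\alpha-\beta|\,h_X(x,\theta)$ --- but it need not be attained, which is exactly why equality is claimed only under $\alpha\beta\ge 0$.
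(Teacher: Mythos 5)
Your proof is correct and complete: the reduction to $\alpha\ge\beta\ge 0$ via homogeneity with multiplier $-1$, the single use of convexity to write $\alpha x=(\alpha-\beta)x+\beta x$, the translation axiom~\eqref{ax::LSemiIsotropic} (an equality in the isotropic case), and the triangle-inequality bound $(|\alpha|+|\beta|)h_X(x,\theta)=|\alpha-\beta|h_X(x,\theta)$ for opposite signs together cover every case, including the equality claim for $\alpha\beta\ge 0$. There is nothing in the paper to compare against: the authors state this lemma without proof, citing \cite{VeraBabenko_JANO}; your argument is the standard one and could serve as the omitted proof verbatim (the only cosmetic omission is the word ``by symmetry of $h_X$'' justifying the reduction to $\alpha\ge\beta$).
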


In addition, we also need the following lemmas.  We omit their elementary proofs.
\begin{lemma}\label{l::distBetweenInverseElems} 
Let $X$ be an isotropic $L$-space. Then for any $ x \in X^c\cap X^{\rm inv}$, 
$$
h_X(x,x')=h_X(x+x,\theta)=2h_X(x,\theta).
$$
\end{lemma}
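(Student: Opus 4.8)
The plan is to split the claimed chain $h_X(x,x')=h_X(x+x,\theta)=2h_X(x,\theta)$ into its two equalities and to verify each one directly from the $L$-space axioms, the definition of a convex element, and the definition of an invertible element.

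For the first equality I would use isotropy. Since $x'$ is the inverse of $x$, we have $x+x'=\theta$, and by commutativity of addition also $x'+x=\theta$. Adding the element $x$ to both arguments of the metric and using that in an isotropic $L$-space inequality~\eqref{ax::LSemiIsotropic} holds with equality, I obtain
\[
h_X(x,x')=h_X(x+x,\,x'+x)=h_X(x+x,\theta).
\]
It is worth noting that this is the only step where isotropy is genuinely used: in a general $L$-space the same manipulation yields only the one-sided estimate $h_X(x,x')\le h_X(x+x,\theta)$.

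For the second equality I would use convexity together with the homogeneity of the metric. Because $x\in X^{\rm c}$, taking $\alpha=\beta=1$ in the definition of a convex element gives $x+x=(1+1)x=2x$. Then the homogeneity axiom $h_X(\alpha u,\alpha v)=|\alpha|\,h_X(u,v)$ applied with $\alpha=2$, $u=x$, $v=\theta$, together with $2\theta=\theta$, gives $h_X(x+x,\theta)=h_X(2x,2\theta)=2h_X(x,\theta)$; equivalently, this is precisely the equality case of Lemma~\ref{l::distBetweenConvexElemMultipliers} with $\alpha=2$, $\beta=0$ (valid since $X$ is isotropic and $\alpha\beta\ge 0$). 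Chaining the two equalities completes the argument.

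I do not anticipate any real obstacle: each step is an immediate consequence of a single axiom or of a previously stated lemma, which is exactly why the paper includes this among the statements whose elementary proofs are omitted. The one point to keep in mind is that the first equality—and hence the full double equality rather than merely a one-sided estimate—relies essentially on the hypothesis that $X$ is isotropic.
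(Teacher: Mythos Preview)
Your proof is correct. The paper actually omits the proof of this lemma, stating only that it is elementary, so there is no explicit argument to compare against; your two-step verification via isotropy for $h_X(x,x')=h_X(x+x,\theta)$ and convexity plus homogeneity (or equivalently Lemma~\ref{l::distBetweenConvexElemMultipliers}) for $h_X(x+x,\theta)=2h_X(x,\theta)$ is precisely the straightforward argument the authors have in mind.
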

 \begin{lemma}\label{l::inversElementNorm} 
For any $x \in X^{\rm inv}\cap X^{\rm c}$, $h_X(x',\theta) = h_X(x,\theta)$.
\end{lemma}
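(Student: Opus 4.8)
The plan is to derive the two inequalities $h_X(x',\theta)\le h_X(x,\theta)$ and $h_X(x,\theta)\le h_X(x',\theta)$ by applying the semi-isotropy axiom~\eqref{ax::LSemiIsotropic} twice, with the roles of $x$ and its inverse $x'$ interchanged. First I would fix, for $x\in X^{\rm inv}\cap X^{\rm c}$, the (unique) inverse $x'$, so that $x+x'=\theta$; by commutativity also $x'+x=\theta$, hence $x'\in X^{\rm inv}$ as well, with $x$ among its inverses. Note that in both arguments I will use only that $x+\theta=x$ for every $x$ (so $\theta+x'=x'$ and $\theta+x=x$), the commutativity of addition, and the metric axiom~\eqref{ax::LSemiIsotropic}; the convexity hypothesis $x\in X^{\rm c}$ is not needed for the equality itself (it guarantees, via the preceding lemma, that $x'\in X^{\rm c}$ too, which is what makes the statement useful elsewhere).

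Step~1: apply~\eqref{ax::LSemiIsotropic} to the pair $(x,\theta)$ translated by $z=x'$:
$$
h_X(x',\theta)=h_X(\theta,x')=h_X(x+x',\theta+x')\le h_X(x,\theta).
$$
Step~2: apply~\eqref{ax::LSemiIsotropic} to the pair $(x',\theta)$ translated by $z=x$, using $x'+x=\theta$:
$$
h_X(x,\theta)=h_X(\theta,x)=h_X(x'+x,\theta+x)\le h_X(x',\theta).
$$
Combining Steps~1 and~2 yields $h_X(x',\theta)=h_X(x,\theta)$, as claimed.

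There is essentially no hard part here: the proof is a direct double application of the translation-contraction axiom. The only points requiring (trivial) care are the bookkeeping identities $\theta+x'=x'$, $\theta+x=x$, and $x'+x=x+x'=\theta$, which let the left-hand sides collapse to $h_X(x',\theta)$ and $h_X(x,\theta)$ respectively, and the observation that $x'$ is itself invertible so that Step~2 is legitimate. This matches the paper's description of the proof as elementary.
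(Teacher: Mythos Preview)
Your proof is correct: two applications of the semi-isotropy axiom~\eqref{ax::LSemiIsotropic}, translating first by $x'$ and then by $x$, give the two opposite inequalities and hence the equality. The paper itself omits the proof as elementary, and your argument is exactly the kind of elementary verification intended; your side remark that convexity is not actually used in the equality itself is also accurate.
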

\subsection{Auxiliarly results}
\begin{definition}
For $f\colon [a,b]\to\RR$ and $x\in X^{\rm inv}$ define the function $f_x\colon  [a,b]\to X$,
\begin{equation}\label{simpleFunc}
f_x(t) = f_+(t)\cdot x + f_-(t)\cdot x',
\end{equation}
where for real $\xi$, $\xi_\pm :=\max\{\pm \xi ,0\}$.
\end{definition}

\begin{lemma}\label{l::LspaceValuedFunc}
Let $X$ be an isotropic $L$-space and $f\in H^\omega ([a,b],\RR)$. If $x\in X^{\rm inv}\cap X^{\rm c}$ is such that $h_X(x,\theta) = 1$, then   $f_x\in H^\omega([a,b],X)$ and 
\begin{equation}\label{integralOfSimpleFunc}
\int_a^b f_x(t)dt  = \int_a^b f_+(t)dt\cdot x + \int_a^b f_-(t)dt\cdot x'= \left(\int_a^b f(t)dt\right)_+\cdot x + \left(\int_a^b f(t)dt\right)_-\cdot x'.
\end{equation}
\end{lemma}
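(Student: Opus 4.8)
The plan is to verify the two assertions of the lemma separately: first that $f_x$ belongs to $H^\omega([a,b],X)$, and then the two displayed formulas for its integral.

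For the membership $f_x \in H^\omega([a,b],X)$, I would fix $t',t'' \in [a,b]$ and estimate $h_X(f_x(t'),f_x(t''))$. Writing $f_x(t) = f_+(t)\cdot x + f_-(t)\cdot x'$, I would use the remark following the definition of an $L$-space (the ``quadrilateral'' inequality $h_X(u+w,v+w') \le h_X(u,v) + h_X(w,w')$) to split
$$
h_X(f_x(t'),f_x(t'')) \le h_X(f_+(t')x, f_+(t'')x) + h_X(f_-(t')x', f_-(t'')x').
$$
Since $x \in X^c$ with $h_X(x,\theta)=1$, Lemma~\ref{l::distBetweenConvexElemMultipliers} gives $h_X(f_+(t')x,f_+(t'')x) \le |f_+(t')-f_+(t'')|$, and similarly by Lemma~\ref{l::inversElementNorm} we have $h_X(x',\theta)=h_X(x,\theta)=1$, so $x' \in X^c$ (by the lemma stating the inverse of a convex invertible element is convex) yields $h_X(f_-(t')x',f_-(t'')x') \le |f_-(t')-f_-(t'')|$. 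The key elementary fact then is that for real numbers $\xi,\eta$ one has $|\xi_+ - \eta_+| + |\xi_- - \eta_-| = |\xi - \eta|$ (both parts change on disjoint sign-regions, with total variation exactly $|\xi-\eta|$). Applying this with $\xi = f(t')$, $\eta = f(t'')$ and using $f \in H^\omega([a,b],\RR)$ gives $h_X(f_x(t'),f_x(t'')) \le |f(t')-f(t'')| \le \omega(|t'-t''|)$, as required.

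For the integral formula, I would first observe that $f_x$ is a bounded measurable ($L$-space valued) function, so $\int_a^b f_x(t)\,dt$ is defined; in fact $f_x = (f_+ \cdot x) + (f_- \cdot x')$ is a sum of functions of the form (scalar function)$\cdot$(fixed element), and by linearity (Property~1 of the integral) it suffices to handle each summand. For a summand $g(t)\cdot x$ with $g \ge 0$ bounded measurable and $x \in X^c$, I would approximate $g$ uniformly by nonnegative simple scalar functions $g_k$; then $g_k \cdot x$ is a simple $L$-space valued function whose integral, using that $x$ is convex and the definition of the integral of a simple function, equals $\big(\sum_i g_{k,i}\mu(T_i)\big)\cdot x = \big(\int_a^b g_k(t)\,dt\big)\cdot x$ (the convexity of $x$ is exactly what lets one pull the scalar sum inside). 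Passing to the limit (using continuity of scalar multiplication, i.e. the axiom $h_X(\alpha u,\alpha v) = |\alpha|h_X(u,v)$ together with Lemma~\ref{l::distBetweenConvexElemMultipliers}) gives $\int_a^b g(t)x\,dt = \big(\int_a^b g(t)\,dt\big)\cdot x$. Applying this to $g=f_+$ and $g=f_-$ (and $x$, $x'$ respectively) yields the first equality in~\eqref{integralOfSimpleFunc}. The second equality then reduces to the scalar identities $\int_a^b f_+(t)\,dt = \big(\int_a^b f(t)\,dt\big)_+$ and $\int_a^b f_-(t)\,dt = \big(\int_a^b f(t)\,dt\big)_-$, which hold because $f \in H^\omega([a,b],\RR)$ is continuous and hence does not change sign on $[a,b]$ unless it is identically zero on each sign-interval — more precisely, a continuous function on an interval has $\int f_+ = (\int f)_+$ and $\int f_- = (\int f)_-$ whenever $f$ does not change sign; but in general this can fail, so here one must invoke that $H^\omega$ functions, while they may change sign, still satisfy the identity because...

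Actually the cleanest route for the second equality is to note it is a purely real-variable statement and that it need not hold for arbitrary $f$, so the intended reading is that it holds precisely when $f$ has constant sign on $[a,b]$, which is the situation in which this lemma will be applied; alternatively the paper may intend $f \ge 0$ or $f \le 0$ throughout. I expect the main (though still modest) obstacle to be getting the $L$-space bookkeeping exactly right in the limiting argument for the integral — in particular making sure the convexity hypotheses on $x$ and $x'$ are genuinely used where the scalar coefficients are combined, since without convexity $\int g\cdot x\,dt$ need not equal $(\int g\,dt)\cdot x$. Everything else is routine.
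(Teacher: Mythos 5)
Your argument for $f_x\in H^\omega([a,b],X)$ is correct and in fact a bit slicker than the paper's (the paper does a three-way case analysis on the signs of $f(s),f(t)$, while you use the quadrilateral inequality together with the identity $|\xi_+-\eta_+|+|\xi_--\eta_-|=|\xi-\eta|$; both are valid). Your derivation of the first equality in~\eqref{integralOfSimpleFunc} via simple-function approximation and convexity of $x$, $x'$ is also fine and is more detailed than the paper, which essentially asserts it.

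The genuine gap is in the second equality of~\eqref{integralOfSimpleFunc}. You read it as the componentwise scalar identity $\int_a^b f_+=\bigl(\int_a^b f\bigr)_+$, correctly observe that this fails when $f$ changes sign, and then propose to rescue the lemma by restricting to constant-sign $f$. That is the wrong diagnosis: the equality is an identity in the $L$-space $X$, not in $\RR$, and it holds for \emph{every} $f$ because inverse elements cancel. Writing $A=\int_a^b f_+(t)dt$, $B=\int_a^b f_-(t)dt$ and assuming say $A\ge B$, convexity of $x$ gives $Ax=Bx+(A-B)x$, hence
\begin{equation*}
Ax+Bx'=(A-B)x+B(x+x')=(A-B)x+\theta=\left(\textstyle\int_a^b f\right)_+\cdot x+\left(\textstyle\int_a^b f\right)_-\cdot x',
\end{equation*}
since $\int_a^b f=A-B\ge 0$ and $0\cdot x'=\theta$. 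This cancellation (using $x+x'=\theta$ and convexity of $x$ and $x'$) is exactly the paper's argument. Your proposed restriction to $f$ of constant sign would moreover be fatal to the applications: the lemma is invoked for the extremal functions $G=g+\alpha$ in Lemma~\ref{l::KSLemma}, whose proof explicitly treats the case $I\le 0\le J$, i.e.\ sign-changing $G$.
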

\begin{proof}
If $s,t\in [a,b]$ are such that $f(s),f(t)\geq 0$, then due to Lemma~\ref{l::distBetweenConvexElemMultipliers},
$$
h_X(f_x(s), f_x(t))
=
h_X(f(s)\cdot x, f(t)\cdot x)
\leq \omega(|s-t|).
$$
Analogously, due to Lemma~\ref{l::inversElementNorm}, in the case, when  $f(s),f(t)\leq 0$. When $f(s)\geq 0\geq f(t)$, 
\begin{multline*}
h_X(f_x(s), f_x(t))
=
h_X(f_+(s)\cdot x, f_-(t)\cdot x')
= 
h_X(f_+(s)\cdot x +f_-(t)\cdot x, \theta) 
\\=
|f(s)-f(t)|h_X(x,\theta)
\leq \omega(|t-s|).
\end{multline*}
Hence $f_x\in H^\omega([a,b],X)$. Equality~\eqref{integralOfSimpleFunc} follows from~\eqref{simpleFunc} and convexity of $x$ and $x'$.
 Indeed, let for definiteness $\int_a^b f_+(t)dt \geq \int_a^b f_-(t)dt$. Then
\begin{gather*}
    \int_a^b f_x(t)dt 
    =
    \left(\int_a^b f_+(t)dt\right)\cdot x + \left(\int_a^b f_-(t)dt\right)\cdot x'
    \\=
    \left(\int_a^b f_-(t)dt\right)\cdot x +
    \left(\int_a^b (f_+(t)- f_-(t))dt\right)\cdot x +
    \left(\int_a^b f_-(t)dt\right)\cdot x'
    \\=
    \left(\int_a^b f(t)dt\right)\cdot x 
    =
    \left(\int_a^b f(t)dt\right)_+\cdot x 
    =
    \left(\int_a^b f(t)dt\right)_+\cdot x + \left(\int_a^b f(t)dt\right)_-\cdot x'.
\end{gather*}
\end{proof}

\begin{lemma}\label{l::derivativeOfSimpleFunc}
Let $X$ be an isotropic $L$-space, $x\in X^{\rm c}\cap X^{\rm inv}$, and $f\colon [a,b]\to \RR$ be a continuously differentiable function. Then  the derivative $\DerH f_x(t)$ exists at each point $t\in [a,b]$ and 
$\DerH f_x(t) = (f'(t))_x.$
\end{lemma}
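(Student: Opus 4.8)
The plan is to verify the claimed formula $\DerH f_x(t) = (f'(t))_x$ by splitting into cases according to the sign of $f'$ near the point $t$, and then handle the isolated points where $f'$ changes sign separately by a direct limit computation. Recall that $f_x(t) = f_+(t)\cdot x + f_-(t)\cdot x'$, so on any subinterval where $f$ is nonnegative we have $f_x = f\cdot x$, and on any subinterval where $f$ is nonpositive we have $f_x = (-f)\cdot x' = f_-(t)\cdot x'$.

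First I would treat the case where $f'(t) \neq 0$. Then by continuity of $f'$ there is a neighborhood of $t$ on which $f'$ has constant sign, so $f$ is strictly monotone there. If in addition $f(t) > 0$ (or $f(t) < 0$), shrinking the neighborhood we may assume $f$ keeps a constant sign on it, hence $f_x = f\cdot x$ (resp.\ $f_x = (-f)\cdot x'$) on that neighborhood. The increments $f_x(t+\gamma)\DiffH f_x(t)$ then reduce, using Lemma~\ref{l::distBetweenConvexElemMultipliers} and convexity of $x$, to $(f(t+\gamma)-f(t))\cdot x$ when $f$ is increasing (and the analogous expression with $x'$ or with a sign flip in the other subcases), and dividing by $\gamma$ and letting $\gamma\to +0$ gives $f'(t)\cdot x = (f'(t))_x$ by definition of $f_x$ applied to the single real value $f'(t)$. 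The subcase $f(t)=0$ with $f'(t)\neq 0$ means $t$ is a simple zero; on one side of $t$ the function is positive and on the other negative, and one computes the one-sided difference quotients directly: $\gamma^{-1}(f_x(t+\gamma)\DiffH f_x(t)) = \gamma^{-1}f_+(t+\gamma)\cdot x \to f'(t)\cdot x$ if $f'(t) > 0$ (since $f_x(t) = \theta$ here), and similarly from the left, matching $(f'(t))_x = f'(t)_+\cdot x + f'(t)_-\cdot x'$.

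Next I would handle $f'(t) = 0$, which is the case requiring a little more care since $f$ need not be monotone near $t$ and can oscillate in sign. Here I want to show $\DerH f_x(t) = \theta$, matching $(0)_x = \theta$. The key estimate is that for small $\gamma>0$ the difference $f_x(t+\gamma)\DiffH f_x(t)$ exists and $h_X(f_x(t+\gamma)\DiffH f_x(t), \theta) = h_X(f_x(t+\gamma), f_x(t)) = o(\gamma)$: indeed $h_X(f_x(t+\gamma), f_x(t)) \le |f(t+\gamma) - f(t)|\cdot h_X(x,\theta)$ by the Lipschitz-type argument already used in the proof of Lemma~\ref{l::LspaceValuedFunc} (covering all sign combinations of $f(t+\gamma), f(t)$ via Lemmas~\ref{l::distBetweenConvexElemMultipliers} and~\ref{l::inversElementNorm}), and $|f(t+\gamma)-f(t)| = o(\gamma)$ since $f'(t)=0$. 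One must first confirm that the Hukuhara difference actually exists: this follows because, on the segment $[t, t+\gamma]$, $f_x$ takes the form $f_+\cdot x + f_-\cdot x'$ and one can write $f_x(t+\gamma) = f_x(t) + w$ explicitly with $w = (f_+(t+\gamma)-f_+(t))\cdot x + (f_-(t+\gamma)-f_-(t))\cdot x'$ after using $(\alpha+\beta)x = \alpha x + \beta x$ on the convex elements to cancel common nonnegative multiples — one has to be slightly careful to write the cancellation so that all coefficients that get multiplied are nonnegative, exactly as in the displayed computation at the end of the proof of Lemma~\ref{l::LspaceValuedFunc}. Then $\gamma^{-1}(f_x(t+\gamma)\DiffH f_x(t)) \to \theta$ and likewise from the left, so $\DerH f_x(t) = \theta = (f'(t))_x$.

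The main obstacle I anticipate is the bookkeeping around existence and explicit form of the Hukuhara difference $f_x(t+\gamma)\DiffH f_x(t)$ when $f$ changes sign on $[t,t+\gamma]$ (possibly several times): one cannot simply say $f_x = f\cdot x$, and must instead manipulate $f_+, f_-$ with the convexity identity, always arranging that only nonnegative scalars multiply $x$ and $x'$ so that the semilinear axioms apply. Once that algebraic identity is in place, the metric estimate $h_X(f_x(t+\gamma),f_x(t)) \le |f(t+\gamma)-f(t)|h_X(x,\theta)$ and the hypothesis $f\in C^1$ do the rest routinely. Everything else is a direct application of Lemma~\ref{l::distBetweenConvexElemMultipliers}, Lemma~\ref{l::inversElementNorm}, and the definition of the Hukuhara type derivative.
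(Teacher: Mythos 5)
Your proof is essentially the paper's: the paper likewise reduces everything to the increment identity $f_x(t+\gamma)\DiffH f_x(t) = (f(t+\gamma)-f(t))_x$ and then splits on the sign of $f'(t)$, handling $f'(t)=0$ via the metric bound $|f(t+\gamma)-f(t)|\,h_X(x,\theta)=o(\gamma)$ exactly as you do; your additional case split on the sign of $f(t)$ becomes unnecessary once that identity is in hand. One correction is needed in your treatment of the $f'(t)=0$ case: the witness $w=(f_+(t+\gamma)-f_+(t))\cdot x+(f_-(t+\gamma)-f_-(t))\cdot x'$ is not the right element, because its coefficients can be negative (e.g.\ when $0<f(t+\gamma)<f(t)$), and for negative scalars neither the convexity identity $(\alpha+\beta)x=\alpha x+\beta x$ nor any relation between $(-c)x$ and $c\,x'$ is available from the axioms. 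The correct witness is $w=(f(t+\gamma)-f(t))_x=(f(t+\gamma)-f(t))_+\cdot x+(f(t+\gamma)-f(t))_-\cdot x'$, which is verified by precisely the nonnegative-coefficient cancellation you describe together with $c\cdot x+c\cdot x'=c(x+x')=\theta$; with that replacement your argument is complete.
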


\begin{proof} First of all note, that if $t,t+\gamma\in [a,b]$, then 
$
f_x(t+\gamma)\DiffH f_x(t) = (f(t+\gamma) - f(t))_x$.
Let $\gamma>0$. If $f'(t)>0$, then for all small enough $\gamma$, $f(t+\gamma) > f(t)$, hence
\begin{gather*}
    \lim\limits_{\gamma\to +0}\gamma^{-1}( f_x(t+\gamma)\DiffH f_x(t))
    =
    \lim\limits_{\gamma\to +0}\gamma^{-1}(f(t+\gamma)- f(t))x
    = f'(t)\cdot x = (f'(t))_x.
\end{gather*}
Analogously in the case $f'(t) <0$.
Finally, if $f'(t) = 0$, then, due to Lemma~\ref{l::inversElementNorm}, 
$$h_X\left(\gamma^{-1}(f_x(t+\gamma)\DiffH f_x(t)) ,\theta\right) =
\left|\gamma^{-1}(f(t+\gamma)- f(t))\right|h_X(x,\theta)
\to 0, \gamma\to+0.
$$
Analogously for the quantity   $\lim\limits_{\gamma\to +0}\gamma^{-1}(f_x(t)\DiffH f_x(t-\gamma))$. The lemma is proved.\end{proof}

\section{On the Korneichuk--Stechkin lemma}\label{s::KSLemma}
\begin{lemma}\label{l::KSLemma}
 Let positive almost everywhere functions $\psi_1\in B([a,a'],\RR)$, $\psi_2\in B([b',b],\RR)$, $a<a'\le b'<b$ such that
$
\int _ {a} ^ {a '} \psi_1 (t) dt =
\int _ {b '} ^ {b} \psi_2 (t) dt
$
be given.
Let also $ \omega$ be a modulus of continuity and the function $ \rho\colon [a, c] \to [c, b],\; c=(a'+b')/2,$ be defined by the relations
\begin{equation}\label{rhoDefinition}
\int_{a}^{s} \psi_1(t) dt =\int_{\rho(s)}^{b} \psi_2(t) dt,\;\text{if}\; s\in [a,a'],\;\;\;\text {and}\;\;\; \rho(s)=a'+b'-s,\; \text{if}\; s\in [a',c].
\end{equation}
Then for an $L$-space $X$ and the functional $S(\psi_1,\psi_2)$ defined in~\eqref{LspacevaluedFunct}
\begin{equation}\label{mainInequality}
    S(\psi_1,\psi_2)
\\ \leq
\int_{a}^{a'} \psi_1(s)\omega(\rho(s)-s)ds = \int_{b}^{b'} \psi_2(t)\omega(t-\rho^{-1}(t))dt.
\end{equation}
If $\omega$ is a concave modulus of continuity and $X$ is isotropic, then~\eqref{mainInequality} turns into equality. 
In this case, the supremum is attained on the functions $(\pm g+\alpha)_x(\cdot) + y\in H^{\omega}([a,b],X)$, where $\alpha\in \RR$, $y\in X$, $x\in X^{\rm c}\cap X^{\rm inv}$, $h_X(x,\theta) = 1$, and
\begin{equation}\label{extremalFunctions}
   g(t) = 
   \begin{cases}
    -\int_t^c\omega'(\rho(s) - s)ds,& a\leq t \leq c,  \\  
    \int_c^t\omega'(s - \rho^{-1}(s))ds,& c\leq t \leq b.
   \end{cases}
\end{equation}  
 \end{lemma}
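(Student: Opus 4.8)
The plan is to reduce the $L$-space valued problem to the classical real-valued Korneichuk--Stechkin lemma by a two-sided argument: first establish the upper bound in~\eqref{mainInequality} for an arbitrary $L$-space $X$, and then, when $\omega$ is concave and $X$ is isotropic, exhibit extremal functions that attain it. For the upper bound, I would start from an arbitrary $f\in H^\omega([a,b],X)$ and estimate
$$
h_X\!\left(\int_a^{a'}f(t)\psi_1(t)\,dt,\ \int_{b'}^{b}f(t)\psi_2(t)\,dt\right)
$$
using Property~2 of the Lebesgue integral together with the change-of-variables formula induced by $\rho$. Since $\int_a^{a'}\psi_1 = \int_{b'}^b\psi_2$ and $\rho$ is the absolutely continuous increasing bijection matching these measures (i.e. $\psi_1(s)\,ds$ is pushed forward to $\psi_2(t)\,dt$ under $\rho$), one rewrites the second integral as an integral over $[a,a']$ against $\psi_1(s)\,ds$ with integrand $f(\rho(s))$; then the metric between the two integrals is bounded by $\int_a^{a'}\psi_1(s)\,h_X(f(s),f(\rho(s)))\,ds \le \int_a^{a'}\psi_1(s)\,\omega(\rho(s)-s)\,ds$, using the defining modulus-of-continuity property of $f$ and monotonicity of $\omega$. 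The equality of the two expressions on the right of~\eqref{mainInequality} is then just the same change of variables $t=\rho(s)$ applied to the real integral, noting $\rho(s)-s$ becomes $t-\rho^{-1}(t)$; on the middle interval $[a',c]$ the substitution $\rho(s)=a'+b'-s$ is used, and the contributions there match by symmetry.

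For sharpness, assume $\omega$ concave and $X$ isotropic. I would take $g$ as defined in~\eqref{extremalFunctions} and first verify $g\in H^\omega([a,c'])$-type bounds — more precisely that $g\colon[a,b]\to\RR$ belongs to $H^\omega([a,b],\RR)$; this is exactly the classical computation from the real Korneichuk--Stechkin lemma (see~\cite[\S7.1]{ExactConstants}), where concavity of $\omega$ makes $\omega'$ nonincasing and guarantees the Lipschitz-type estimate $|g(t')-g(t'')|\le\omega(|t'-t''|)$, and where one checks $g(\rho(s))-g(s)=\omega(\rho(s)-s)$ for $s\in[a,a']$ and that $g(c)=g(\rho(c))$ from the midpoint normalization. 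Granting this, Lemma~\ref{l::LspaceValuedFunc} shows that $(\pm g+\alpha)_x\in H^\omega([a,b],X)$ for any $\alpha\in\RR$ and any $x\in X^{\rm c}\cap X^{\rm inv}$ with $h_X(x,\theta)=1$, and adding a constant $y\in X$ keeps it in the class by axiom~\eqref{ax::LSemiIsotropic}. Using~\eqref{integralOfSimpleFunc} to compute $\int_a^{a'}(g+\alpha)_x\psi_1$ and $\int_{b'}^b(g+\alpha)_x\psi_2$, choosing $\alpha$ so that one of these two integrals has a representative multiple of $x$ and the other of $x'$ (the real-valued extremal choice makes $\int_a^{a'}(g+\alpha)\psi_1 - \int_{b'}^b(g+\alpha)\psi_2$ equal to the full right-hand side of~\eqref{mainInequality} with the correct sign), and then invoking Lemma~\ref{l::distBetweenInverseElems} (which gives $h_X(\lambda x,\mu x')=(\lambda+\mu)h_X(x,\theta)$ for $\lambda,\mu\ge 0$) together with Lemma~\ref{l::distBetweenConvexElemMultipliers}, one gets the metric distance between the two $L$-space integrals equal to $\int_a^{a'}\psi_1(s)\,\omega(\rho(s)-s)\,ds$, matching the upper bound.

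The main obstacle I anticipate is the bookkeeping in the sharpness part: one must choose the sign $\pm$ and the shift $\alpha$ so that the positive and negative parts $(g+\alpha)_\pm$ split cleanly across the two intervals of integration in the right way — i.e. that $g+\alpha$ does not change sign in a manner that forces cancellation inside $X$ rather than addition. This is where isotropy and the convexity/invertibility of $x$ are essential: in an isotropic space $h_X(u\cdot x,\theta)+h_X(v\cdot x',\theta)$ collapses to $h_X((u+v)x,\theta)$ only when the coefficients are handled with the right signs, so one really needs the real-valued extremal configuration to be arranged so that $(g+\alpha)$ is, say, nonnegative where it is integrated against $\psi_1$ and nonpositive against $\psi_2$ (or vice versa). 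Verifying that the classical $g$ from~\eqref{extremalFunctions} admits such a shift, and that this shift is simultaneously compatible on the middle interval $[a',c]$, is the delicate point; everything else is a transcription of the real-variable argument combined with the integral identities~\eqref{integralOfSimpleFunc} and the elementary $L$-space lemmas collected in Section~\ref{s::LSpace}.
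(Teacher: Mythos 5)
Your proposal is correct and follows essentially the same route as the paper: the upper bound via the change of variables $t=\rho(s)$ combined with Property~2 of the integral and the modulus-of-continuity bound, and sharpness via the classical real-valued extremal $g$ lifted to $X$ through $(\cdot)_x$ and the elementary $L$-space lemmas. The ``delicate point'' you flag about choosing the shift $\alpha$ is handled in the paper by a short three-case analysis on the signs of $I=\int_a^{a'}\psi_1 G$ and $J=\int_{b'}^{b}\psi_2 G$ (using isotropy to reduce, e.g., $h_X(I_-x',J_+x)$ to $|I-J|h_X(x,\theta)$), which shows that every $\alpha$ works, so no careful choice is actually needed.
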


 \begin{proof} Differentiating equality~\eqref{rhoDefinition}, we get $\psi_1(s) = -\psi_2(\rho(s))\rho'(s)$ for all $s\in [a,a']$. After a substitution  $t=\rho(s)$, we obtain that 
 \begin{equation*}
 \int_{b'}^{b}\psi_2(t)f(t)dt 
 = 
 -\int_{a}^{a'}\psi_2(\rho(s))\rho'(s)f(\rho(s))ds
 =
 \int_{a}^{a'}\psi_1(s)f(\rho(s))ds.   
 \end{equation*}
 Hence
 \begin{gather*}
 h_X\left(\int_{a}^{a'} \psi_1(t)f(t) dt, \int_{b'}^{b}\psi_2(t) f(t) dt\right)
     =
 h_X\left(\int_{a}^{a'} \psi_1(t)f(t) dt,
 \int_{a}^{a'}\psi_1(t)f(\rho(t)) dt\right)
 \\ \leq
 \int_{a}^{a'}h_X\left(\psi_1(t)f(t), \psi_1(t)f(\rho(t)) \right)dt
  \\=
 \int_{a}^{a'}\psi_1(t)h_X(f(t),f(\rho(t)))dt
 \leq
 \int_{a}^{a'}\psi_1(t)\omega(\rho(t) - t)dt
 \end{gather*}
 and the inequality in~\eqref{mainInequality} is proved. The 
 equality in~\eqref{mainInequality} can be obtained after the substitution $s = \rho^{-1}(t)$.
Let now $\omega$ be concave and $X$ be isotropic. For $y\in X$, 
$$\int_{a}^{a'} \psi_1(t)y dt 
= 
\left(\int_{a}^{a'} \psi_1(t) dt\right)P(y)
=
\left(\int_{b}^{b'} \psi_2(t) dt\right)P(y)
=
\int_{b}^{b'} \psi_2(t) ydt,
$$
and  since $X$ is isotropic, we obtain
\begin{gather*}
    h_X\left(\int_{a}^{a'} \psi_1(t)f(t) dt , \int_{b'}^{b}\psi_2(t) f(t) dt\right)
 =
    h_X\left(\int_{a}^{a'} \psi_1(t)f(t) dt +\int_{a}^{a'} \psi_1(t)ydt, \right.
    \\ \left.
    \int_{b'}^{b}\psi_2(t) f(t) dt + \int_{b'}^{b} \psi_2(t)y dt\right) 
    = 
 h_X\left(\int_{a}^{a'} \psi_1(t)(f(t) +y) dt, \int_{b'}^{b}\psi_2(t) (f(t)+y) dt\right)
\end{gather*}
and hence if the supremum in~\eqref{mainInequality} is attained on some function $f$, then it is attained on all functions $f(\cdot)+y$, $y\in X$. 
Let $G(\cdot) = g(\cdot) + \alpha$, where $g$ is defined in~\eqref{extremalFunctions} and $\alpha\in \RR$. It is known (see~\cite[\S~7.1]{ExactConstants}), that the function $G$ belongs to $H^\omega([a,b],\RR)$ and is extremal in~\eqref{mainInequality} for real-valued functions. 
 Let $I=\int_a^{a'} \psi_1(t)G(t) dt$ and $J=\int_{b'}^b\psi_2(t) G(t) dt$.  By~\eqref{extremalFunctions}, the function $g$ is non-decreasing. Hence there are three possibilities 1) $I\geq 0, \; J\geq 0$, 2)~$I\leq 0, \; J\geq 0$ and 3) $I\leq 0, \; J\leq 0$. Considering each of them and taking into account Lemmas~\ref{l::distBetweenConvexElemMultipliers}, \ref{l::inversElementNorm} and~\ref{l::LspaceValuedFunc} we obtain
\begin{gather*}
 h_X\left(\int_a^{a'} \psi_1(t)G_x(t) dt, \int_{b'}^b\psi_2(t) G_x(t) dt\right) 
 = |I-J|h_X(x,\theta) = \int_a^{a'}\psi_1(t)\omega(\rho(t) - t)dt.
 \end{gather*}
 We elaborate the proof in the case $I \leq 0$ and $J \geq 0$:
 $$
 h_X\left(\int_a^{a'} \psi_1(t)G_x(t) dt, \int_{b'}^b\psi_2(t) G_x(t) dt\right)=h_X(I_-\cdot x',J_+\cdot x)
 $$
 $$
 =h_X(I_-\cdot x'+I_-\cdot x, I_-\cdot x+J_+\cdot x)=h_X(\theta, (-I+J)\cdot x)=|I-J|h(x,\theta).
 $$
 The case $G(\cdot)=-g(\cdot)+\alpha$, can be considered analogously.
The lemma is proved. \end{proof}

Recall, that for a measurable non-negative function $f\colon [a,b]\to\RR$ the function
$$ 
m(f,y):={\rm mes}\{ t\in [a,b]\colon f(t)>y\},\;y\in\RR
$$
is called the distribution function of $f$. The function 
$$
r(f,t):=\inf\{ y\colon m(f,y)\le t\},\;t\in [0,b-a]
$$
is called the non-increasing (or Hardy's) rearrangement of $f$. $r(f,\cdot)$ is a non-increasing on $[0,b-a]$, and equimeasurable with $f$ function. 
\begin{remark}
For a concave modulus of continuity $\omega$, and an isotropic $L$-space $X$,  the statement of Lemma~\ref{l::KSLemma} can be rewritten as follows:
\begin{equation}\label{rearrangement}
S(\psi_1,\psi_2)=     \left|\int_{0}^{b-a}r' (\Psi,s)\omega(s)ds\right| = \int_{0}^{b-a}r (\Psi,s)\omega'(s)ds,
\end{equation}
where
 $   \Psi (s)=\int_a^s(\psi_1(u)-\psi_2(u))du,\;s\in[a,b].$
\end{remark}
The equality of the quantities in the right-hand sides of inequalities~\eqref{mainInequality} and~\eqref{rearrangement} for concave $\omega$ was proved by Korneichuk, see e.~g.~\cite[Lemma~7.1.2]{ExactConstants}.

\section{Estimate for the functional $S(\psi_1,\psi_2)$ and Ostrowski type inequalities}
\subsection{General estimate for the functional $S(\psi_1,\psi_2)$}
\begin{definition}
A function $\varphi\in C([a,b],\RR)$
is called a hat-function, if
\begin{enumerate}
    \item $\varphi(a) =\varphi(b)=0$, $|\varphi(t) |>0$ for $a<t<b$, and
    \item $\forall y\in (0,\max\limits_{t\in[a,b]}|\varphi(t)|)$ the equation $|\varphi(t)|=y$ has exactly two roots on $(a,b)$.
\end{enumerate}
Each hat-function $\varphi$ is continued to the whole line by $\varphi(t) = 0$, $t\notin (a,b)$.
\end{definition}

Denote by  $D[a,b]$ the set of functions $\psi\colon [a,b]\to\RR$ that have finite one-sided limits $\psi(t+0)$ and $\psi(t-0)$ for all $t\in (a,b)$, and finite limits $\psi(a+0)$ and $\psi(b-0)$. Set
$
D_0[a,b]=\{ \psi\in D[a,b]\colon \int_a^b\psi(t)dt=0 \}$ and
$
D^1_0[a,b]=\{ f(t)=\int_a^t\psi(u)du\colon \psi \in D_0[a,b] \}.
$
As it is known (see e.~g.~\cite{Korneichuk71} and~\cite[Chapter~7]{ExactConstants}), each function $f\in D^1_0[a,b]$ can be represented as a finite or countable sum of hat-functions
\begin{equation}\label{sigma}
f(t)=\sum\nolimits_k\varphi_k(t).
\end{equation}
This equality is called the $\Sigma$-representation of $f$. The following properties are satisfied (see~\cite[Chapter~7]{ExactConstants}).
1) $|f(t)|=\sum\nolimits_k|\varphi_k(t)|,\; a\le t\le b;$
  2) The intervals $(\alpha_k,\alpha_k')$, $(\beta_k',\beta_k)$, on which the functions $\varphi_k(t)$ are strictly monotone, are pairwise disjoint, and on each of them $\varphi_k(t)=f(t)+c_k$, $c_k\in\RR$, and hence $f'(t)=\sum\nolimits_k\varphi_k'(t)$ almost everywhere on $[a,b]$;
  3) $\int_a^b|f(t)|dt=\sum\nolimits_k\int_{\alpha_k}^{\beta_k}|\varphi_k(t)|dt$;
    4) $ \bigvee_a^b f =\sum\nolimits_k\bigvee_{\alpha_k}^{\beta_k}(\varphi_k).$

\begin{definition}
For a function $f\in D^1_0[a,b]$ with $\Sigma$-representation~\eqref{sigma}, the Korneichuk $\Sigma$-rearrangment of $f$ is defined by equality
$$
R(f;t)=\sum\nolimits_kr(|\varphi_k|,t), \; 0\le t\le b-a.
$$
\end{definition}

\begin{theorem}\label{th::KSLemmaGeneralEstimate}
Let $\omega$ be a concave modulus of continuity and $\psi_1, \psi_2 \in B([a,b],\RR)$ be such that $\psi_1- \psi_2\in D_0[a,b]$. Set $\Psi(t)=\int_a^t[\psi_1(u)- \psi_2(u)]du$. Then
\begin{equation}\label{estS}
    S(\psi_1, \psi_2)\le \int_0^{b-a}|R'(\Psi;t)|\omega(t)dt.
\end{equation}
\end{theorem}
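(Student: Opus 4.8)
The plan is to reduce the general case to the already-established Lemma~\ref{l::KSLemma} by decomposing $\Psi$ into its $\Sigma$-representation and estimating the contribution of each hat-function separately. First I would observe that for any $f\in H^\omega([a,b],X)$ we can write, using integration by parts (valid after approximating $f$ by simple functions and using properties~1--4 of the integral together with the fact that $\Psi(a)=\Psi(b)=0$),
\begin{equation*}
\int_a^b f(t)\psi_1(t)\,dt \ \text{and}\ \int_a^b f(t)\psi_2(t)\,dt
\end{equation*}
differ, in the metric $h_X$, by an expression controlled by $\Psi$; more precisely I would aim to show that
$h_X\!\left(\int_a^b f\psi_1,\int_a^b f\psi_2\right)$ equals (or is bounded by) a quantity of the form $h_X\!\left(\int \Psi\,df_{(+)},\int \Psi\,df_{(-)}\right)$, i.e.\ that $S(\psi_1,\psi_2)$ depends on $\psi_1-\psi_2$ only through $\Psi$. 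The key structural input is the $\Sigma$-representation $\Psi=\sum_k\varphi_k$ with the disjoint-support and $f'=\sum_k\varphi_k'$ properties listed after~\eqref{sigma}: on the union of the monotonicity intervals $(\alpha_k,\alpha_k')\cup(\beta_k',\beta_k)$ of $\varphi_k$ the derivative $\psi_1-\psi_2$ coincides with $\varphi_k'$, so the ``mass'' of $\psi_1-\psi_2$ splits as a disjoint sum over $k$.

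Next I would, for each hat-function $\varphi_k$, apply Lemma~\ref{l::KSLemma} with the roles of $\psi_1,\psi_2$ played by the (nonnegative, up to sign) pieces $\varphi_k'$ restricted to its two monotonicity intervals $[\alpha_k,\alpha_k']$ and $[\beta_k',\beta_k]$: since $\varphi_k$ is a hat-function these pieces are positive a.e., have equal integrals (because $\varphi_k(\alpha_k)=\varphi_k(\beta_k)=0$), and the associated function $\rho_k$ is exactly the one pairing points at equal height on the two sides of $\varphi_k$. Lemma~\ref{l::KSLemma} then gives, for the sub-functional built from $\varphi_k$, the bound $\int_{\alpha_k}^{\alpha_k'}|\varphi_k'(s)|\,\omega(\rho_k(s)-s)\,ds$, which by the change of variables in the remark following Lemma~\ref{l::KSLemma} equals $\int_0^{b-a} r(|\varphi_k|,t)\,\omega'(t)\,dt = -\int_0^{b-a} r'(|\varphi_k|,t)\,\omega(t)\,dt$. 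Summing over $k$ and using the definition $R(\Psi;t)=\sum_k r(|\varphi_k|,t)$ together with property~3 (so that term-by-term integration is justified and $\sum_k r'(|\varphi_k|,\cdot)=R'(\Psi;\cdot)$ a.e., both being nonpositive) yields the right-hand side $\int_0^{b-a}|R'(\Psi;t)|\omega(t)\,dt$ of~\eqref{estS}.

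The main obstacle, and the step I would be most careful about, is the first one: rigorously justifying that the full functional $S(\psi_1,\psi_2)$ is dominated by the sum of the hat-function sub-functionals. The difficulty is that in Lemma~\ref{l::KSLemma} the pairing $\rho$ is a single monotone bijection between two intervals, whereas here the $\Sigma$-representation produces countably many overlapping hat-functions whose monotonicity intervals are interleaved; one must show that an arbitrary $f\in H^\omega([a,b],X)$ still satisfies, along each pair of monotonicity intervals of $\varphi_k$, the $\omega$-Lipschitz bound, and that the triangle inequality for the $L$-space metric (the strengthened form in the Remark after axiom~\eqref{ax::LSemiIsotropic}) lets one add up the per-$k$ estimates without loss. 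I expect this to go through by the same telescoping/integration-by-parts computation as in the proof of Lemma~\ref{l::KSLemma}, now carried out hat-function by hat-function and then summed, with the convergence of the sum guaranteed by property~3 of the $\Sigma$-representation and boundedness of $f$; the concavity of $\omega$ is used (as in Lemma~\ref{l::KSLemma} and in the cited Korneichuk identity~\cite[Lemma~7.1.2]{ExactConstants}) to pass between the $\omega(\rho(s)-s)$ form and the rearrangement form $\int_0^{b-a} r(|\varphi_k|,t)\omega'(t)\,dt$.
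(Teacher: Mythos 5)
Your overall architecture matches the paper's proof: decompose $\Psi$ via its $\Sigma$-representation, apply Lemma~\ref{l::KSLemma} (in the rearrangement form~\eqref{rearrangement}) to each hat-function with $\psi_1=(\varphi_k')_+$ and $\psi_2=(\varphi_k')_-$, and sum the per-$k$ estimates using the strengthened triangle inequality $h_X(x+z,y+w)\le h_X(x,y)+h_X(z,w)$. Steps two and three of your plan are exactly what the paper does, and your worry about whether $f$ satisfies the $\omega$-bound on the interleaved monotonicity intervals is unfounded --- $f\in H^\omega([a,b],X)$ is a global hypothesis, so no per-interval verification is needed.

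The genuine problem is your first step. Integration by parts, i.e.\ rewriting $\int_a^b f\psi_i$ in terms of $\int \Psi\,df$, is not available here: $X$ has no subtraction, no Stieltjes integral against an $L$-space valued integrator has been defined, and $f\in H^\omega$ need not have any usable variation structure. Taken literally, that step fails. What you actually need from it --- that $S(\psi_1,\psi_2)$ is controlled by $h_X\bigl(\int_a^b f(t)(\psi_1(t)-\psi_2(t))_+\,dt,\ \int_a^b f(t)(\psi_1(t)-\psi_2(t))_-\,dt\bigr)$ --- is true, but the paper obtains it by a much more elementary device: split $[a,b]$ into $E_\pm=\{t:\pm\psi_1(t)\ge\pm\psi_2(t)\}$, write $\int_a^b f\psi_1 = \int_{E_+}f(\psi_1-\psi_2)+\bigl(\int_{E_-}f\psi_1+\int_{E_+}f\psi_2\bigr)$ and similarly for $\int_a^b f\psi_2$, and then cancel the common summand using axiom~\eqref{ax::LSemiIsotropic}, $h_X(x+z,y+z)\le h_X(x,y)$. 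Note this uses $P(f)$ in place of $f$ inside the integrals (harmless, by property~3 of the integral). With that substitution for your first step, the rest of your argument goes through as written; in particular the identification $(\psi_1-\psi_2)_\pm=\sum_k(\varphi_k')_\pm$ from property~2 of the $\Sigma$-representation is exactly the bridge to the per-hat-function application of Lemma~\ref{l::KSLemma}.
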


\begin{proof} Set $E_\pm =\{ t\in [a,b]\colon \pm \psi_1(t)\ge \pm \psi_2(t)\}$. If $P$ is the convexifying operator (see Definition~\ref{def::convexifyingOperator}), then $\psi_1(s)P(f(s)) = (\psi_1(s)-\psi_2(s))P(f(s)) + \psi_2(s)P(f(s))$ for  all $s\in E_+$ and $\psi_2(s)P(f(s)) = (\psi_2(s)-\psi_1(s))P(f(s))+ \psi_1(s)P(f(s))$ for  all $s\in E_-$. Hence for any function
$f\in H^\omega([a,b],X)$ we obtain
\begin{gather*}
h_X\left( \int_a^bf(t)\psi_1(t)dt,\int_a^bf(t)\psi_2(t)dt\right)
\\
=
h_X\left( \int_{E_+}f(t)(\psi_1(t)-\psi_2(t))dt+\int_{E_-}f(t)\psi_1(t)dt +\int_{E_+}f(t)\psi_2(t)dt,\right.
\\
\left.\int_{E_-}f(t)(\psi_2(t)-\psi_1(t))dt
+\int_{E_-}f(t)\psi_1(t)dt 
+\int_{E_+}f(t)\psi_2(t)dt\right)
\\
\leq
h_X\left( \int_{E_+}f(t)(\psi_1(t)-\psi_2(t))dt,\int_{E_-}f(t)(\psi_2(t)-\psi_1(t))dt\right)
\\
=h_X\left( \int_{a}^bf(t)(\psi_1(t)-\psi_2(t))_+dt,\int_{a}^bf(t)(\psi_1(t)-\psi_2(t))_-dt\right).
\end{gather*}
Moreover, the inequality in the above chain becomes equality in the case of isotropic space $X$. Let $\Psi(t)=\sum\nolimits_k\varphi_k(t)$ be the  $\Sigma$-representation of the function $\Psi.$ Since $\Psi'=\psi_1-\psi_2=\sum\nolimits_k\varphi_k'$, due to the mentioned above properties of $\Sigma$-representations, 
$$
(\psi_1-\psi_2)_+=\sum\nolimits_k(\varphi_k')_+,\;\;\; (\psi_1-\psi_2)_-=\sum\nolimits_k(\varphi_k')_-.
$$
Moreover, the functions $(\varphi_k')_+$ and $(\varphi_k')_-$ satisfy the conditions of Lemma~\ref{l::KSLemma} for each $k$. Applying Lemma~\ref{l::KSLemma} (more precisely, equality~\eqref{rearrangement}), we obtain
\begin{gather*}
h_X\left( \int_{a}^bf(t)(\psi_1(t)-\psi_2(t))_+dt,\int_{a}^bf(t)(\psi_1(t)-\psi_2(t))_-dt\right)
\\
=h_X\left( \int_{a}^bf(t)\sum\nolimits_k(\varphi_k')_+dt,\int_{a}^bf(t)\sum\nolimits_k(\varphi_k')_-dt\right)\\
\le \sum\nolimits_kh_X\left( \int_{a}^bf(t)(\varphi_k')_+dt,
\int_{a}^bf(t)(\varphi_k')_-dt\right)
\\
\le \sum\nolimits_k\int_{0}^{b-a}r(|\varphi_k|,t)\omega'(t)dt=\int_{0}^{b-a}R(\Psi,t)\omega'(t)dt. 
\end{gather*}
The theorem is proved.\end{proof}

\begin{remark}\label{r::Sharpness}
In the case of isotropic $X$, estimate~\eqref{estS} is sharp, provided the exremal in Lemma~\ref{l::KSLemma} functions for  $\psi_1=(\varphi_k')_+$ and $\psi_2=(\varphi_k')_-$ can be ''glued'' so that the obtained function belongs to $H^\omega([a,b],X)$. 

\end{remark}
Assume that the $\Sigma$-representation of the function $\Psi$ is
$
\Psi(t)=\sum\nolimits_{k=1}^n\varphi_k(t),
$
if $[\alpha_k,\beta_k]$ is the support of the hat-function $\varphi_k$, $k=1,\ldots,n$, then
$$
\alpha_1<\beta_1\le\alpha_2<\beta_2\le\ldots\le \alpha_n<\beta_n,
$$
and on the segments $[\alpha_k,\beta_k]$ and $[\alpha_{k+1},\beta_{k+1}]$ the functions $\varphi_k$ and $\varphi_{k+1}$ have opposite signs, $k=1,\ldots, n-1$.
Below we sketch the procedure of gluing. We start with the case $X = \RR$. Let $g_k\in H^\omega([\alpha_k,\beta_k],\RR)$ be the extremal for the functional $S((\varphi_k')_+,(\varphi_k')_-)$ function. On the set $\bigcup_{k=1}^n[\alpha_k,\beta_k]$ define the function $g$, setting $g(t) = g_k(t) +c_k$, $t\in [\alpha_k,\beta_k]$, where $c_k$ are such that $g(\beta_k) = g(\alpha_{k+1})$, $k=1,\ldots, n-1$. Next, we continue $g$ to the whole segment $[a,b]$ setting $g(t) = g(\alpha_1)$, if $t\leq \alpha_1$, $g(t) =g(\beta_k)$, if $t\in (\beta_k,\alpha_{k+1}),\; k=1,\ldots, n-1,$ and $g(t) =g(\beta_k)$, if $t\geq \beta_n$.
Lemma~4.1 from~\cite{stepanets2018} contains a criteria for $g$ to belong to $H^\omega([a,b],\RR)$. In particular, this is true, if 
for some $m$,  $1\le m\le n$, 
$$
\beta_1-\alpha_1\le\beta_2-\alpha_2\le\ldots\le\beta_m-\alpha_m \text{ and } \beta_m-\alpha_m\ge\beta_{m+1}-\alpha_{m+1}\ge\ldots\ge \beta_n-\alpha_n.
$$
If $g\in H^\omega([a,b],\RR)$, then the function $(g+\gamma)_x(\cdot) + y$ with $\gamma\in \RR$, $y\in X$ and $x\in X^{\rm c}\cap X^{\rm inv}$, $h_X(x,\theta) = 1$, is a glued extremal for~\eqref{estS}.

\subsection{Ostrowski type inequalities.}
The following theorem is a wide generalization of Theorem~2 from~\cite{barnet}, in which $X = \RR$, $\omega(t) = t$, and $[c,d]\subset [a,b]$.

\begin{theorem}\label{th::ostrowskiInequality}
Let two segments $[a,b]$ and $[c,d]$ be given. Set $M = \max\{b-a,d-c\}$, $m = \min\{b-a,d-c\}$, for $\alpha,\beta\geq 0$ set $I(\alpha,\beta) = \int_{\alpha}^{\beta}\omega(s)ds$ and assume for definiteness that $a\leq c$. Then for all $f\in H^\omega([a,\max\{b,d\}],X)$
\begin{multline*}
h_X\left(\frac{1}{b-a}\int_a^b{f(t)dt}, \frac {1}{d-c}\int_c^d{f(t)dt}\right)
\\ \leq
 \begin{cases}
\frac{M-m}{M^2}\left\{I\left(0,\frac{M(c-a)}{M-m}\right) + I\left(0,\frac{M(b-d)}{M-m}\right)\right\},& [c,d]\subset [a,b],\\
\frac{1}{M+m}I\left(\frac{M(b-c)}{m}, d-a\right) + 
\frac{M-m}{M^2}I\left(0,\frac{M(b-c)}{m}\right), & b\in [c,d],\\
\frac{1}{M+m}I(c-b, d-a), & c\geq b.
 \end{cases}
\end{multline*}
If $X$ is isotropic and $\omega$ is concave, then the inequality is sharp.
\end{theorem}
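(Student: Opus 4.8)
The plan is to recognize the left-hand side as the functional $S(\psi_1,\psi_2)$ of Section~\ref{s::KSLemma} for the normalized indicators $\psi_1=\tfrac{1}{b-a}\mathbf 1_{[a,b]}$ and $\psi_2=\tfrac{1}{d-c}\mathbf 1_{[c,d]}$, regarded on the segment $[a,\max\{b,d\}]$ (extended by zero), and to apply the Korneichuk--Stechkin lemma. Since $\int\psi_1=\int\psi_2=1$, the difference $\psi_1-\psi_2$ belongs to $D_0$, so $\Psi(t):=\int_a^t(\psi_1(u)-\psi_2(u))\,du$ is well defined and vanishes at both ends of its support. Repeating the reduction at the beginning of the proof of Theorem~\ref{th::KSLemmaGeneralEstimate}, for every admissible $f$ one passes from $h_X\big(\int f\psi_1,\int f\psi_2\big)$ to $h_X\big(\int f(\psi_1-\psi_2)_+,\int f(\psi_1-\psi_2)_-\big)$, with equality when $X$ is isotropic; the remaining work is to read off the $\Sigma$-representation of $\Psi$ and to evaluate the resulting bound of Lemma~\ref{l::KSLemma} in each of the three geometric configurations.

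In the two configurations where the supports of $\psi_1$ and $\psi_2$ do not interlace --- $c\ge b$ and $b\in[c,d]$ --- the function $\Psi$ is non-negative on $[a,d]$ and is a single hat-function, so Lemma~\ref{l::KSLemma} applies directly (with its $\psi_1$ replaced by $(\Psi')_+$ on the interval of increase of $\Psi$ and $\psi_2$ by $(\Psi')_-$ on the interval of decrease), and one evaluates the bound $\int\psi_1(s)\omega(\rho(s)-s)\,ds$ by an explicit change of variable. For $c\ge b$ the function $\rho(s)-s$ is affine in $s$ with slope $-(M+m)/(b-a)$, and the substitution $v=\rho(s)-s$ turns the bound into $\tfrac{1}{M+m}\int_{c-b}^{d-a}\omega(v)\,dv$, the third line of the claimed inequality. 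For $b\in[c,d]$ it is more convenient to substitute the common level $y=\Psi(s)$, which turns the bound into $\int_0^{\Psi_{\max}}\omega\big({\rm mes}\{\Psi>y\}\big)\,dy$; here the graph of $\Psi$ is a single peak, one of whose sides consists of two linear pieces and the other of one, so ${\rm mes}\{\Psi>y\}$ is affine in $y$ on each of two $y$-ranges, with slopes $-(M+m)$ and $-M^2/(M-m)$, and a short computation --- conveniently carried out with $p=c-a$, $q=b-c$, $r=d-b$ and using that the breakpoint value of ${\rm mes}\{\Psi>\cdot\}$ is $\tfrac{M(b-c)}{m}$ regardless of which of $b-a$, $d-c$ is larger --- produces the two summands $\tfrac{1}{M+m}I\big(\tfrac{M(b-c)}{m},d-a\big)$ and $\tfrac{M-m}{M^2}I\big(0,\tfrac{M(b-c)}{m}\big)$.

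In the configuration $[c,d]\subset[a,b]$ the function $\Psi$ increases on $[a,c]$, decreases through zero at a unique point $t_0\in(c,d)$, and increases again on $[d,b]$ back to $0$; thus $\Psi=\varphi_1+\varphi_2$, where $\varphi_1$ is a hat-function on $[a,t_0]$ of length $\tfrac{M(c-a)}{M-m}$ and $\varphi_2$ a hat-function on $[t_0,b]$ of length $\tfrac{M(b-d)}{M-m}$. As in the proof of Theorem~\ref{th::KSLemmaGeneralEstimate}, $h_X\big(\int f(\psi_1-\psi_2)_+,\int f(\psi_1-\psi_2)_-\big)$ is at most the sum over $k=1,2$ of the Lemma~\ref{l::KSLemma} bounds for the pairs $((\varphi_k')_+,(\varphi_k')_-)$, and each of these, by the same affine substitution as in the case $c\ge b$, equals $\tfrac{M-m}{M^2}I(0,\ell_k)$ with $\ell_k$ the length of the support of $\varphi_k$; adding them gives the first line.

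For sharpness, assume $X$ isotropic and $\omega$ concave. In the cases $c\ge b$ and $b\in[c,d]$, $\Psi$ is a single hat-function, so the function $(g+\gamma)_x(\cdot)+y$, with $g$ as in~\eqref{extremalFunctions}, $\gamma\in\RR$, $y\in X$ and $x\in X^{\rm c}\cap X^{\rm inv}$ with $h_X(x,\theta)=1$, attains equality, by Lemma~\ref{l::KSLemma} and isotropy. In the case $[c,d]\subset[a,b]$ there are two hat-functions whose supports $[a,t_0]$, $[t_0,b]$ merely abut at $t_0$; their lengths are comparable, so the sufficient condition for gluing recalled after Remark~\ref{r::Sharpness} is met, the real extremals of Lemma~\ref{l::KSLemma} glue to some $g\in H^\omega([a,b],\RR)$, and then $(g+\gamma)_x(\cdot)+y$ is extremal, the failure of the subadditivity step to be an equality being compensated by $g$ being simultaneously extremal for both hat-functions. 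I expect the main obstacle to be the bookkeeping in the case $b\in[c,d]$, in particular verifying that the two sub-configurations $b-a\gtrless d-c$ both lead to the single displayed formula.
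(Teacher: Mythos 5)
Your proposal is correct and follows exactly the route the paper indicates: apply Theorem~\ref{th::KSLemmaGeneralEstimate} (equivalently, the $\Sigma$-representation of $\Psi$ together with Lemma~\ref{l::KSLemma}) to $\psi_1=\frac1{b-a}\chi_{[a,b]}$, $\psi_2=\frac1{d-c}\chi_{[c,d]}$, and obtain the extremal functions by the gluing procedure described after Remark~\ref{r::Sharpness}. The computations you supply (the hat-function decomposition in each of the three configurations, the breakpoint value $M(b-c)/m$ in both sub-cases of $b\in[c,d]$, and the slopes $-(M+m)$ and $-M^2/(M-m)$ of the rearrangement) are precisely the "technical details" the paper omits, and they check out.
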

The theorem follows from Theorem~\ref{th::KSLemmaGeneralEstimate}  applied to the functions $\psi_1=\frac 1{b-a}\chi_{[a,b]}$ and $\psi_2=\frac 1{d-c}\chi_{[c,d]}$. We omit the technical details of the proof.  The extremal function can be obtained using the described above procedure of gluing.


Direct computations show that Theorem~\ref{th::ostrowskiInequality} implies the following result.
\begin{corollary}\label{c::symmetricalInterval}
If in Theorem~\ref{th::ostrowskiInequality} additionally $c+d = a+b$, i.~e. the midpoints of the intervals $(a,b)$ and $(c,d)$ coincide, then
$$ 
h_X\left(\int_a^b{f(t)dt}, \frac {b-a}{d-c}\int_c^d{f(t)dt}\right)
 \leq
 \frac {4(c-a)}{b-a}\int_0^{(b-a)/2}{\omega(t)dt}.
$$
If $X$ is isotropic and $\omega$ is concave, then the inequality is sharp.
\end{corollary}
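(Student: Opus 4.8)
The plan is to derive Corollary~\ref{c::symmetricalInterval} directly from Theorem~\ref{th::ostrowskiInequality} by specializing to the symmetric configuration $c+d=a+b$. First I would observe that under this hypothesis, with $a\le c$ and the definitions $M=\max\{b-a,d-c\}$, $m=\min\{b-a,d-c\}$, the interval $[c,d]$ is the one of smaller length centered at the same midpoint as $[a,b]$, so $[c,d]\subset[a,b]$ and we are in the first case of the theorem. Symmetry gives $c-a=b-d=(M-m)/2$, so the two integral terms $I\left(0,\frac{M(c-a)}{M-m}\right)$ and $I\left(0,\frac{M(b-d)}{M-m}\right)$ are equal, each reducing to $I(0,M/2)=\int_0^{(b-a)/2}\omega(s)\,ds$ since $M=b-a$. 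Thus the right-hand bound in Theorem~\ref{th::ostrowskiInequality} collapses to $\frac{M-m}{M^2}\cdot 2\int_0^{(b-a)/2}\omega(s)\,ds = \frac{2(c-a)}{(b-a)}\cdot\frac{2}{b-a}\int_0^{(b-a)/2}\omega(t)\,dt$, using $M-m=2(c-a)$ and $M=b-a$.

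Next I would reconcile the left-hand sides. Theorem~\ref{th::ostrowskiInequality} bounds $h_X\left(\frac{1}{b-a}\int_a^b f,\ \frac{1}{d-c}\int_c^d f\right)$, whereas the corollary states a bound on $h_X\left(\int_a^b f,\ \frac{b-a}{d-c}\int_c^d f\right)$. These two quantities are related by the scaling axiom of an $L$-space: for any $\lambda>0$, $h_X(\lambda u,\lambda v)=|\lambda|h_X(u,v)$. Taking $\lambda=b-a$ and $u=\frac{1}{b-a}\int_a^b f$, $v=\frac{1}{d-c}\int_c^d f$ turns the theorem's left-hand side into $\frac{1}{b-a}h_X\left(\int_a^b f,\ \frac{b-a}{d-c}\int_c^d f\right)$. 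Multiplying through by $b-a$ then yields exactly the left-hand side of the corollary, and multiplies the right-hand bound by $b-a$, producing $\frac{4(c-a)}{b-a}\int_0^{(b-a)/2}\omega(t)\,dt$, which is the claimed estimate.

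Finally, the sharpness assertion for isotropic $X$ and concave $\omega$ is inherited verbatim from the sharpness clause of Theorem~\ref{th::ostrowskiInequality}: the extremal function constructed there (via the gluing procedure and Lemma~\ref{l::KSLemma}) attains equality in the first-case bound, hence, after the scaling by $b-a$, attains equality in the corollary's inequality as well. So the only genuine content is the arithmetic simplification of the first-case formula under $c+d=a+b$.

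I do not expect a serious obstacle here; the "direct computations" referenced in the paper are precisely the substitutions $c-a=b-d$, $M-m=2(c-a)$, $M=b-a$ into the first branch of Theorem~\ref{th::ostrowskiInequality}, combined with one application of the $L$-space scaling axiom. The one point deserving a line of care is verifying that $[c,d]\subset[a,b]$ indeed holds under the stated hypotheses (so that the first branch, and not another, applies): this follows because $d-c=2m-(b-a)+\cdots$—more simply, equal midpoints plus $d-c\le b-a$ forces containment—and the degenerate case $d-c=b-a$ (where $c=a$, $d=b$) makes both sides zero and is consistent with the bound.
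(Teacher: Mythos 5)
Your proposal is correct and is exactly the paper's route: the paper derives the corollary from Theorem~\ref{th::ostrowskiInequality} by ``direct computations,'' which are precisely your substitutions $c-a=b-d=(M-m)/2$, $M=b-a$ into the first branch, followed by rescaling with $h_X(\lambda u,\lambda v)=|\lambda|h_X(u,v)$, and the sharpness is likewise inherited from the theorem. The arithmetic checks out, including the degenerate case $c=a$, $d=b$.
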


    Applying Theorem~\ref{th::ostrowskiInequality} to the segment that contains $t$ and the segment $[c,d]$ (while both are contained in $[a,b]$) and then shrinking the first one into a point, we obtain

\begin{corollary}\label{c::valueIntegralDeviation}
Let $t\in [a,b]$, $[c,d]\subset [a,b]$ and $\omega (\cdot)$ be an arbitrary modulus of continuity. If $f\in H^\omega([a,b], X)$ and $P$ is the convexifying operator, then
\begin{equation}\label{ostr}
h_X\left( P(f(t)),\frac 1{d-c}\int_c^d f(u)du\right)
\leq \frac{1}{d-c}\int_c^d\omega(|s-t|)ds.
\end{equation}
If $X$ is isotropic, then the inequality is sharp. An extremal function is  $(\omega(|\cdot -t|))_x$, where $x\in X^{\rm c}, h_X(x,\theta)=1$.
\end{corollary}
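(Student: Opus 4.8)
The plan is to give a direct proof that avoids Theorem~\ref{th::ostrowskiInequality} and its concavity hypothesis, using only the elementary properties of the $L$-space valued integral; this is exactly what lets the statement hold for an \emph{arbitrary} modulus of continuity. The key observation is that $P(f(t))$ is itself a mean value: the function $u\mapsto f(t)$ is simple, so by the definition of the integral of a simple mapping $\int_c^d f(t)\,du=(d-c)\,P(f(t))$, i.e. $\frac{1}{d-c}\int_c^d f(t)\,du=P(f(t))$. Writing both entries of $h_X$ in~\eqref{ostr} as normalized integrals over $[c,d]$ and invoking property~2 of the integral together with the homogeneity axiom $h_X(\alpha x,\alpha y)=|\alpha|\,h_X(x,y)$,
\begin{align*}
h_X\left(P(f(t)),\frac{1}{d-c}\int_c^d f(u)\,du\right)
&=h_X\left(\frac{1}{d-c}\int_c^d f(t)\,du,\ \frac{1}{d-c}\int_c^d f(u)\,du\right)\\
&\le\frac{1}{d-c}\int_c^d h_X\big(f(t),f(u)\big)\,du
\ \le\ \frac{1}{d-c}\int_c^d\omega(|u-t|)\,du,
\end{align*}
the last inequality holding because $f\in H^\omega([a,b],X)$. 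This is exactly~\eqref{ostr}.

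For sharpness when $X$ is isotropic I would test the estimate on $f=g_x$, where $g(u)=\omega(|u-t|)$ and $x\in X^{\rm c}\cap X^{\rm inv}$ is normalized so that $h_X(x,\theta)=1$ (such an $x$ exists by the standing assumption $X^{\rm inv}\cap X^{\rm c}\neq\{\theta\}$). One first checks $g\in H^\omega([a,b],\RR)$: monotonicity and semi-additivity of $\omega$ give $|\omega(|s-t|)-\omega(|u-t|)|\le\omega(|s-u|)$ for all $s,u\in[a,b]$. Then Lemma~\ref{l::LspaceValuedFunc} yields $f=g_x\in H^\omega([a,b],X)$, and, since $g\ge0$ so that $g_-\equiv0$, the integral formula there gives $\int_c^d g_x(u)\,du=\left(\int_c^d\omega(|u-t|)\,du\right)x$. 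As $\omega(0)=0$ we have $g_x(t)=\theta$ and hence $P(f(t))=\theta$, so by the homogeneity axiom and $h_X(x,\theta)=1$,
\begin{align*}
h_X\left(P(f(t)),\frac{1}{d-c}\int_c^d f(u)\,du\right)
=h_X\left(\theta,\ \frac{1}{d-c}\left(\int_c^d\omega(|u-t|)\,du\right)x\right)
=\frac{1}{d-c}\int_c^d\omega(|u-t|)\,du,
\end{align*}
which matches the right-hand side of~\eqref{ostr}.

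There is no genuine obstacle here; the point is that the Korneichuk--Stechkin machinery (Lemma~\ref{l::KSLemma}, Theorems~\ref{th::KSLemmaGeneralEstimate} and~\ref{th::ostrowskiInequality}) is \emph{not} needed --- property~2 of the integral alone suffices, which is why concavity of $\omega$ can be dropped. The two points requiring a little care are: (i) identifying $P(f(t))$ with the mean over $[c,d]$ of the constant function $u\mapsto f(t)$ (immediate from the definition of the integral of a simple mapping, or from property~3); and (ii) evaluating $\int_c^d\omega(|u-t|)\,x\,du=\left(\int_c^d\omega(|u-t|)\,du\right)x$ in the extremal example, which follows from convexity of $x$ together with a routine passage to the limit from simple functions, or simply by citing Lemma~\ref{l::LspaceValuedFunc} (for which isotropy and $x\in X^{\rm c}\cap X^{\rm inv}$ are assumed). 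As an alternative to the direct argument, for \emph{concave} $\omega$ one could instead apply Theorem~\ref{th::ostrowskiInequality} to $[c,d]$ and a short segment $[t,t+\varepsilon]$ (or $[t-\varepsilon,t]$) containing $t$ and let $\varepsilon\to0$; but this requires a case analysis of the right-hand side of that theorem and does not reach non-concave $\omega$, so the direct route is preferable.
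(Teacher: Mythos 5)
Your proof is correct, and it takes a genuinely different route from the paper's. The paper obtains Corollary~\ref{c::valueIntegralDeviation} by applying Theorem~\ref{th::ostrowskiInequality} to $[c,d]$ and a small segment containing $t$ and then shrinking the latter to a point (it only notes in a remark that a direct proof is easy, without giving one). You instead observe that $P(f(t))=\frac{1}{d-c}\int_c^d f(t)\,du$ by the definition of the integral of a simple (constant) mapping, and then a single application of property~2 of the integral plus homogeneity of $h_X$ yields \eqref{ostr}. This is cleaner in two respects: it bypasses the case analysis in the right-hand side of Theorem~\ref{th::ostrowskiInequality} and the limiting/Lebesgue-point argument, and it makes transparent why the inequality holds for an \emph{arbitrary} modulus of continuity, whereas the chain Lemma~\ref{l::KSLemma} $\to$ Theorem~\ref{th::KSLemmaGeneralEstimate} $\to$ Theorem~\ref{th::ostrowskiInequality} is stated under concavity of $\omega$ (so the paper's one-line derivation is, strictly speaking, only licensed for concave $\omega$). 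Your sharpness argument with $f=g_x$, $g(u)=\omega(|u-t|)$, is also sound: subadditivity and monotonicity give $g\in H^\omega([a,b],\RR)$, $g\ge 0$ makes the integral formula of Lemma~\ref{l::LspaceValuedFunc} collapse to $\bigl(\int_c^d g(u)\,du\bigr)x$, and $P(\theta)=\theta$ by linearity of $P$; note that since $g_-\equiv 0$ one could even take $x\in X^{\rm c}$ not necessarily invertible (as the paper's statement does), using Lemma~\ref{l::distBetweenConvexElemMultipliers} in place of Lemma~\ref{l::LspaceValuedFunc} to check membership in $H^\omega([a,b],X)$. The only thing your direct route gives up is the connection to the Korneichuk--Stechkin machinery, which the paper is deliberately showcasing; mathematically nothing is lost.
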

 Let a segment $[a,b]$ and numbers $t,h$ such that $a\le t<t+h\le (a+b)/2$ be given. Applying Theorem~\ref{th::KSLemmaGeneralEstimate} to $\psi_1=\frac 1{b-a}\chi_{[a,b]}$ and $\psi_2=\frac 1{2h}\left(\chi_{[t,t+h]}+\chi_{[a+b-t-h,a+b-t]}\right)$, and passing to the limit as $h\to 0$, we obtain a generalization of \cite[Theorem~2]{Guessab02}.
\begin{corollary}
For arbitrary $f\in H^\omega([a,b], X)$ and $t\in [a,(a+b)/2)$
\begin{multline}\label{guessab}
    h_X\left(\frac 12 (P(f(t))+P(f(a+b-t))),\frac 1{b-a}\int_a^bf(u)du\right)
       \\ \le \frac 2{b-a}\left(\int_0^{t-a}\omega(u)du+\int_0^{({a+b-2t})/2}\omega(u)du\right). 
\end{multline}
Inequality~\eqref{guessab} becomes equality for $f(u)=\min\{\omega(|u-t|),\omega(|u+t-a-b|)\}\cdot x$, $x\in X^{\rm c}$.
\end{corollary}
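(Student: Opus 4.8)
The statement follows from Theorem~\ref{th::KSLemmaGeneralEstimate} by specializing the weights to $\psi_1 = \frac{1}{b-a}\chi_{[a,b]}$ and $\psi_2 = \frac{1}{2h}\bigl(\chi_{[t,t+h]} + \chi_{[a+b-t-h,\,a+b-t]}\bigr)$, followed by a limiting argument as $h \to 0$. First I would check that these weights satisfy the hypotheses of Theorem~\ref{th::KSLemmaGeneralEstimate}: both are bounded, and $\int_a^b(\psi_1 - \psi_2) = 1 - 1 = 0$, so $\psi_1 - \psi_2 \in D_0[a,b]$, hence $\Psi(s) = \int_a^s(\psi_1 - \psi_2)$ lies in $D_0^1[a,b]$. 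Since $a \le t < t+h \le (a+b)/2$, the weights are supported on disjoint-enough sets that $\Psi$ vanishes outside $[a, a+b-t]$ and the setup is symmetric about the midpoint $(a+b)/2$; one should verify $\Psi \ge 0$ on its support, which makes $\Psi$ itself (up to sign) a finite sum of at most two hat-functions, so its $\Sigma$-rearrangement $R(\Psi;\cdot)$ is explicitly computable.

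The core of the argument is the identification of the right-hand side of~\eqref{estS} in the limit. For fixed $h$, Theorem~\ref{th::KSLemmaGeneralEstimate} gives $S(\psi_1,\psi_2) \le \int_0^{b-a}|R'(\Psi;t)|\omega(t)\,dt$, and the left-hand side $S(\psi_1,\psi_2)$ is exactly $h_X\bigl(\frac{1}{b-a}\int_a^b f,\ \frac{1}{2h}\int_t^{t+h}f + \frac{1}{2h}\int_{a+b-t-h}^{a+b-t}f\bigr)$ taken to the supremum over $f \in H^\omega$. Using property~2 of the integral (the estimate $h_X(\int f, \int g) \le \int h_X(f,g)$) together with the continuity of $f$ (each $f \in H^\omega$ is continuous with modulus $\omega$), the averages $\frac{1}{2h}\int_t^{t+h}f(u)\,du$ and $\frac{1}{2h}\int_{a+b-t-h}^{a+b-t}f(u)\,du$ converge to $P(f(t))$ and $P(f(a+b-t))$ respectively as $h \to 0$ — here one uses property~3, $\int f = P(\int f) = \int P(f)$, to bring in the convexifying operator — so the whole inner expression tends to $h_X\bigl(\frac{1}{b-a}\int_a^b f,\ \frac12(P(f(t)) + P(f(a+b-t)))\bigr)$, uniformly over the class. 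On the right, one computes $\int_0^{b-a}|R'(\Psi;t)|\omega(t)\,dt$ explicitly via equality~\eqref{rearrangement} as $h \to 0$: the function $\Psi$ for this choice of weights has two monotone arcs whose rearrangement, after passing to the limit, produces $\frac{2}{b-a}\bigl(\int_0^{t-a}\omega(u)\,du + \int_0^{(a+b-2t)/2}\omega(u)\,du\bigr)$. This is the same bookkeeping that appears in the real-valued Ostrowski-type results, and I would lean on~\cite{Korneichuk71} and~\cite[Chapter~7]{ExactConstants} for the rearrangement computation rather than redoing it.

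For sharpness, I would exhibit the candidate extremal directly rather than trace it through the gluing procedure. The claim is that $f(u) = \min\{\omega(|u-t|),\ \omega(|u + t - a - b|)\}\cdot x$ with $x \in X^{\rm c}$, $h_X(x,\theta) = 1$, achieves equality. One checks $f \in H^\omega([a,b],X)$: since $x$ is convex with $h_X(x,\theta)=1$, Lemma~\ref{l::distBetweenConvexElemMultipliers} gives $h_X(\lambda x, \mu x) = |\lambda - \mu|$ for $\lambda,\mu \ge 0$, so the modulus-of-continuity condition for $f$ reduces to the real-valued statement that $u \mapsto \min\{\omega(|u-t|),\omega(|u+t-a-b|)\}$ belongs to $H^\omega$, which holds because the minimum of two $H^\omega$-functions is in $H^\omega$ and each term is in $H^\omega$ as a composition of $\omega$ with a $1$-Lipschitz map. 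Then $P(f(t)) = f(t) = \theta$ and $P(f(a+b-t)) = \theta$ since the minimum vanishes at both points, so the left side of~\eqref{guessab} becomes $h_X\bigl(\frac{1}{b-a}\int_a^b f(u)\,du,\ \theta\bigr) = \frac{1}{b-a}h_X\bigl(\int_a^b f(u)\,du,\theta\bigr)$; because $f(u) = (\text{nonnegative scalar})\cdot x$ with $x$ convex, $\int_a^b f(u)\,du = \bigl(\int_a^b \min\{\dots\}\,du\bigr)x$, and its $h_X$-distance to $\theta$ is just $\int_a^b \min\{\omega(|u-t|),\omega(|u+t-a-b|)\}\,du$. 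Splitting this integral at $u = (a+b)/2$ and on each half comparing the two arguments of the minimum, together with the change of variables $\omega(|u-t|)$, $\omega(|u+t-a-b|)$, yields exactly $\frac{2}{b-a}\bigl(\int_0^{t-a}\omega + \int_0^{(a+b-2t)/2}\omega\bigr)$.

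\textbf{Main obstacle.} The genuinely delicate point is the interchange of the supremum over $f \in H^\omega([a,b],X)$ with the limit $h \to 0$ on the left-hand side, i.e.\ showing $\lim_{h\to 0}\sup_f (\cdots) = \sup_f \lim_{h\to 0}(\cdots)$. The inequality ``$\le$'' after passing to the limit in Theorem~\ref{th::KSLemmaGeneralEstimate}'s bound for each $h$ is automatic (the RHS converges, the LHS expression for each $f$ converges, and the sup of the limits is at most the limsup of the sups); the reverse, needed for sharpness, is handled by the explicit extremal, which makes the equality $\lim \sup = \sup \lim$ moot. So in fact the limiting argument only needs the one-sided convergence of the upper bound plus pointwise convergence of the integrand averages to $P(f(t))$-values — both of which are controlled by the uniform modulus $\omega$ — and the ``hard part'' reduces to the essentially bookkeeping task of evaluating the rearrangement integral in the $h\to 0$ limit, for which the cited Korneichuk machinery suffices.
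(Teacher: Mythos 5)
Your proposal follows the paper's own (one-line) proof exactly: apply Theorem~\ref{th::KSLemmaGeneralEstimate} to $\psi_1=\frac{1}{b-a}\chi_{[a,b]}$ and $\psi_2=\frac{1}{2h}\left(\chi_{[t,t+h]}+\chi_{[a+b-t-h,\,a+b-t]}\right)$, pass to the limit $h\to 0$, and check sharpness directly on the stated extremal function; your verification of the extremal and of the limiting argument is sound. One small correction: $\Psi$ is not of one sign on its support (it is negative on parts of $(t,(a+b)/2)$ and of $(a+b-t-h,b)$, since $\Psi(t+h)=\frac{t+h-a}{b-a}-\frac12<0$), so its $\Sigma$-representation consists of four alternating hat-functions rather than at most two; this does not affect the value of the rearrangement integral you quote or the final bound.
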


\begin{remark}
Inequalities~\eqref{ostr} and~\eqref{guessab} can easily be proved directly.
\end{remark}

\section{On optimal recovery problems on the class $H^\omega([a,b],X)$}\label{s::HOmegaRecovery}
In this section we consider the problems of recovery of the convexifying operator $P$ and the integral 
$
\Lambda(f) = \int_a^b f(t) dt
$ 
on the class $H^\omega([a,b],X)$, given the information operator 
$
I_{\bf t}(f) 
= 
\left(\frac 1 {2h}\int_{t_1 - h}^{t_1 + h}f(t)dt,\dots,\frac 1 {2h}\int_{t_n - h}^{t_n + h}f(t)dt\right),
$
where $n\in\NN$, $h > 0$ and ${\bf t} := (t_1,\dots,t_n)$ are such that  
\begin{equation}\label{knots}
a\leq t_1-h<t_1+h<t_2-h<\ldots<t_n +h \leq b,
\end{equation}
using arbitrary method of recovery  $\Phi\colon X^n\to B([a,b],X)$ and $\Phi\colon X^n\to X$ respectively.
Define a vector $\tau = \tau({\bf t})$ with components
\begin{equation}\label{tau}
\tau_1 = a, \tau_i = \frac 12(t_{i-1} + t_{i}), i = 2,\ldots, n, \tau_{n+1} = b
\end{equation}
and set
\begin{equation}\label{t-star}
{\bf t}^* = (t_1^*,t_2^*\ldots, t_n^*) = \left(\frac{b-a}{2n}, \frac{3(b-a)}{2n},\ldots, \frac{(2n-1)(b-a)}{2n}\right).
\end{equation}
  We need the following well known estimate for the value of the optimal recovery~\eqref{errorOfRecovery}.
 \begin{lemma}\label{l::errorOfRecoveryFromBelow}
If $f,g\in W$ are such that $I(f) = I(g)$, then 
 $$ {\mathcal E}(\Lambda,W, I, X)\geq \frac 12 h_X(\Lambda(f),\Lambda(g)).$$
\end{lemma}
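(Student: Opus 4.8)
The plan is to use the standard ``two-function'' lower bound argument for optimal recovery, adapted to the metric-space setting. Let $\Phi\colon Y\to X$ be an arbitrary method of recovery, and let $f,g\in W$ be given with $I(f)=I(g)$. Write $y := I(f) = I(g)\in Y$, so that $\Phi(I(f)) = \Phi(I(g)) = \Phi(y)$ is a single element of $X$. I would then bound the error of $\Phi$ from below by its values at the two points $f$ and $g$:
$$
{\mathcal E}(\Lambda,W,I,\Phi,X) = \sup_{z\in W} h_X(\Lambda(z),\Phi(I(z))) \geq \max\bigl\{ h_X(\Lambda(f),\Phi(y)),\, h_X(\Lambda(g),\Phi(y))\bigr\}.
$$

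Next I would apply the triangle inequality for the metric $h_X$ to the three points $\Lambda(f)$, $\Phi(y)$, $\Lambda(g)$:
$$
h_X(\Lambda(f),\Lambda(g)) \leq h_X(\Lambda(f),\Phi(y)) + h_X(\Phi(y),\Lambda(g)) \leq 2\max\bigl\{ h_X(\Lambda(f),\Phi(y)),\, h_X(\Lambda(g),\Phi(y))\bigr\}.
$$
Combining the two displays gives ${\mathcal E}(\Lambda,W,I,\Phi,X) \geq \tfrac12 h_X(\Lambda(f),\Lambda(g))$. Since this holds for every method $\Phi$, taking the infimum over $\Phi$ in the definition~\eqref{errorOfRecovery} yields ${\mathcal E}(\Lambda,W,I,X)\geq \tfrac12 h_X(\Lambda(f),\Lambda(g))$, as claimed.

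There is essentially no obstacle here: the only facts used are that $\Phi$ is a genuine function (so $I(f)=I(g)$ forces $\Phi(I(f))=\Phi(I(g))$) and that $h_X$ is a metric satisfying the triangle inequality, which holds by hypothesis since $(X,h_X)$ is a metric space. The one point worth a line of care is making explicit that the supremum over $z\in W$ dominates the particular values at $z=f$ and $z=g$, which is legitimate precisely because $f,g\in W$. This is exactly why the hypothesis $f,g\in W$ (not merely $f,g\in Z$) is needed.
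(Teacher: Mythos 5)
Your proposal is correct and follows essentially the same argument as the paper: bound the supremum below by the maximum of the errors at $f$ and $g$, use $I(f)=I(g)$ to see both are distances to the single point $\Phi(I(f))$, and apply the triangle inequality (the paper passes through the average of the two distances rather than through $2\max$, but this is the same estimate). No gaps.
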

\begin{proof} We have
\begin{gather*}
    \sup_{z\in W} h_X(\Lambda(z), \Phi(I(z)))
    \geq
    \max\left\{h_X(\Lambda(f), \Phi(I(f))), h_X(\Lambda(g), \Phi(I(g)))\right\}
   \\ \geq 
     \frac12\left(h_X(\Lambda(f), \Phi(I(f))) + h_X(\Lambda(g), \Phi(I(f)))\right)
\geq 
      \frac12 h_X(\Lambda(f), \Lambda(g)).
\end{gather*}
\end{proof}
\subsection{Real-valued extremal functions}
For given $n\in\NN$, $h>0$ and ${\bf t}$ that satisfy~\eqref{knots}, denote by $H_{\bf t}^h$ the class of functions $y\in H^\omega([a,b],\RR)$ such that $\int_{t_i-h}^{t_i+h}y(t)dt = 0$ for all $i = 1,\ldots, n$. Note, that for arbitrary $f\in H_{\bf t}^h$,  $x\in X^{\rm c}\cap X^{\rm inv}$, due to Lemma~\ref{l::LspaceValuedFunc}, $\int_{t_i-h}^{t_i+h}f_x(t)dt =\theta$, $i = 1,\ldots, n$. 
\begin{lemma}\label{l::IdRecoveryLowerBound}
Let numbers $n\in\NN$, $h>0$ and ${\bf t} := (t_1,\dots,t_n)$ that satisfy~\eqref{knots} be given. For arbitrary modulus of continuity $\omega$ there exists a function $f_{\bf t}\in H_{\bf t}^h$ such that 
$$
\max\limits_{t\in[a,b]}|f_{\bf t}(t)|
\geq 
\frac 1{2h}\int_{({b-a})/({2n})-h}^{({b-a})/({2n})+h}\omega(u)du.
$$
\end{lemma}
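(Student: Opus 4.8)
The plan is to construct the function $f_{\bf t}$ explicitly by specifying it on each of the $n$ subintervals $[t_i-h,t_i+h]$ and gluing the pieces together by constants, exactly in the spirit of the $\Sigma$-representation machinery recalled above. On a single symmetric interval $[t_i-h,t_i+h]$ of length $2h$, the natural candidate is (up to an additive constant) the function $u\mapsto \omega(|u-t_i|)$, which is odd-shaped about the midpoint once we subtract its mean; more precisely, the right choice is $\varphi_i(u) = \omega(u-t_i)$ on $[t_i, t_i+h]$ and $\varphi_i(u) = -\omega(t_i-u)$ on $[t_i-h,t_i]$ — a hat-type function, antisymmetric about $t_i$, with $\int_{t_i-h}^{t_i+h}\varphi_i = 0$ automatically by symmetry. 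This is precisely the classical extremal function for the Korneichuk--Stechkin functional $S((\varphi_i')_+,(\varphi_i')_-)$ with $\psi_1,\psi_2$ supported symmetrically; its modulus of continuity on $[t_i-h,t_i+h]$ is $\le\omega$ because $\omega$ is a (semi-additive, non-decreasing) modulus of continuity, so $|\varphi_i(u)-\varphi_i(v)|\le\omega(|u-v|)$ there — here one uses the elementary estimate $\omega(s)+\omega(t)\ge\omega(s+t)$ to handle pairs $u,v$ straddling $t_i$.

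First I would set $f_{\bf t}(u) = \varphi_i(u) + c_i$ on $[t_i-h,t_i+h]$, choosing the constants $c_i$ (and extending $f_{\bf t}$ by constants on the gaps $(t_i+h, t_{i+1}-h)$, and by constants to the left of $t_1-h$ and to the right of $t_n+h$) so that the whole function is continuous. The gluing does not spoil membership in $H^\omega([a,b],\RR)$: on the flat connecting pieces the function is constant, and when comparing a point in $[t_i-h,t_i+h]$ with a point in $[t_j-h,t_j+h]$ for $i<j$, the total variation picked up is at most $\omega(h)+\omega(h)\le\omega(2h)\le\omega(\text{distance})$ since the two intervals are disjoint and separated — here again semi-additivity of $\omega$ together with the fact that all pieces have the same "height" $\le\omega(h)$ does the job. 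Since $\int_{t_i-h}^{t_i+h}f_{\bf t}(u)\,du = \int_{t_i-h}^{t_i+h}\varphi_i(u)\,du + 2hc_i = 2hc_i$, I must additionally arrange $c_i = 0$ for all $i$; but the $c_i$ are forced by continuity, so the real point is to choose the sign pattern of the hats $\varphi_i$ (replacing $\varphi_i$ by $-\varphi_i$ is allowed) so that the continuity-forced constants all vanish. Alternating signs appropriately — or, more simply, observing that each $\varphi_i$ vanishes at both endpoints $t_i\pm h$, so the continuity constraint already gives $c_{i+1}=c_i$ for all $i$ and hence $c_i\equiv c_1$; then translating the whole function by $-c_1$ gives $c_i\equiv 0$ while staying in $H^\omega$ — yields $f_{\bf t}\in H_{\bf t}^h$.

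It remains to verify the lower bound on $\max_{t\in[a,b]}|f_{\bf t}(t)|$. With $c_i\equiv 0$ we have, on $[t_i-h,t_i+h]$, that $f_{\bf t}$ ranges over $[-\omega(h),\omega(h)]$ symmetrically, so in particular $\max|f_{\bf t}| = \omega(h)$ — wait, that is not yet the stated bound, which involves an average $\tfrac{1}{2h}\int \omega$. So instead of the plain hat $\varphi_i$ I would use the shifted/normalized extremal: the correct comparison is with $g_i(u) := \varphi_i(u) - \text{(appropriate constant)}$ chosen not to have zero mean but rather so that $f_{\bf t}$ has zero mean *and* attains its sup at an endpoint; concretely, on each interval replace the antisymmetric hat by the function whose value at the left endpoint $t_i-h$ equals $-\tfrac{1}{2h}\int_{({b-a})/({2n})-h}^{({b-a})/({2n})+h}\omega(u)\,du$ and which grows like $\omega$ — this is exactly the extremal function from Lemma~\ref{l::KSLemma} for the pair $\psi_1 = \tfrac1{2h}\chi_{[t_i-h,t_i+h]}$ against a point-mass-type $\psi_2$, whose sup-norm equals the claimed average by the equality case there. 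The hard part, and the step I expect to require the most care, is precisely this bookkeeping of constants: simultaneously enforcing the $n$ zero-mean conditions $\int_{t_i-h}^{t_i+h}f_{\bf t} = 0$, continuity across all gaps, membership in $H^\omega([a,b],\RR)$, and the lower bound $\max|f_{\bf t}|\ge\tfrac1{2h}\int_{({b-a})/({2n})-h}^{({b-a})/({2n})+h}\omega(u)\,du$. The cleanest route is to note that the worst (largest) such average over a single interval among $i=1,\dots,n$ is governed by the interval closest to an endpoint of $[a,b]$, whose half-gap to the boundary is at least $(b-a)/(2n)-h$ in the extremal configuration ${\bf t}={\bf t}^*$ from~\eqref{t-star}; building $f_{\bf t}$ to concentrate its oscillation on that interval while staying flat elsewhere gives the bound, and semi-additivity of $\omega$ guarantees the $H^\omega$ constraint survives the gluing.
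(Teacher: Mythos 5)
There is a genuine gap here, in fact two. First, your basic building block does not lie in $H^\omega([a,b],\RR)$: for the antisymmetric hat $\varphi_i(u)=\omega(u-t_i)$ for $u\ge t_i$ and $\varphi_i(u)=-\omega(t_i-u)$ for $u\le t_i$, a pair $u<t_i<v$ gives $|\varphi_i(v)-\varphi_i(u)|=\omega(v-t_i)+\omega(t_i-u)$, and semi-additivity reads $\omega(s)+\omega(t)\ge\omega(s+t)$ --- the inequality you invoke to "handle pairs straddling $t_i$" goes the wrong way. Concretely, for $\omega(t)=\sqrt t$ one has $|\varphi_i(t_i+\varepsilon)-\varphi_i(t_i-\varepsilon)|=2\sqrt\varepsilon>\sqrt{2\varepsilon}=\omega(2\varepsilon)$. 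The same reversal appears in your gluing estimate "$\omega(h)+\omega(h)\le\omega(2h)$". The object that \emph{is} in $H^\omega$ is the even bump $c-\omega(|u-\tau|)$, via $|\omega(\alpha)-\omega(\beta)|\le\omega(|\alpha-\beta|)$. Second, even granting admissibility, the hats only give $\max|f_{\bf t}|=\omega(h)$, which (as you notice) falls short of the target averaged bound once $(b-a)/(2n)>h$; the repair you then sketch is not a proof and misidentifies the mechanism. The lemma must hold for \emph{every} ${\bf t}$ satisfying~\eqref{knots}, not just for ${\bf t}^*$ --- it is precisely this universal lower bound that later certifies ${\bf t}^*$ as optimal --- and the relevant large gap need not be adjacent to $a$ or $b$. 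The missing device is a pigeonhole over the $2n$ half-gaps $[\tau_i,t_i]$, $[t_i,\tau_{i+1}]$ from~\eqref{tau}, whose lengths sum to $b-a$, so at least one of them, say $[\tau_{i^*},t_{i^*}]$, has length at least $(b-a)/(2n)$.

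The paper then centers the even bump at the midpoint $\tau_{i^*}$, taking $f_{\bf t}(u)=\frac1{2h}\int_{t_{i^*}-h}^{t_{i^*}+h}\omega(|s-\tau_{i^*}|)\,ds-\omega(|u-\tau_{i^*}|)$ near that gap. Its value at the peak $\tau_{i^*}$ is the mean $\frac1{2h}\int_{t_{i^*}-h}^{t_{i^*}+h}\omega(|s-\tau_{i^*}|)\,ds\ge\frac1{2h}\int_{(b-a)/(2n)-h}^{(b-a)/(2n)+h}\omega(u)\,du$ because $t_{i^*}-\tau_{i^*}\ge(b-a)/(2n)$ and $\omega$ is non-decreasing: the averaged bound arises exactly because the oscillation reaches from the measurement window $[t_{i^*}-h,t_{i^*}+h]$ out to $\tau_{i^*}$, which lies outside that window. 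Finally, "concentrate the oscillation on one interval while staying flat elsewhere" is incompatible with the constraints: every window $[t_j-h,t_j+h]$ must have zero mean, so $f_{\bf t}$ cannot equal a nonzero constant on any of them, while continuity forces the constant on the gaps to be the (generally nonzero) value $f_{\bf t}(t_{i^*}+h)$. The paper resolves this by propagating the shape through successive reflections $f_{\bf t}(u)=f_{\bf t}(t_j+t_{j+1}-u)$ on the remaining windows, constant on the gaps in between, which preserves both the zero means and membership in $H^\omega$. Your proposal, as written, does not close any of these three points.
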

\begin{proof}
 Among $2n$ segments $[\tau_i,t_i]$ and $[t_i,\tau_{i+1}]$, $i = 1,\ldots, n$, there exists at least one with length at least $\frac{b-a}{2n}$. Let for definitness it be the segment $[\tau_{i^*},t_{i^*}]$, $i^*\in \{1,\ldots, n\}$. We define a functions $f_{\bf t}$ on the segment $[\tau_{i^*},t_{i^*}+h] = [a,t_{1}+h]$, if  $i^* = 1$, or on the segment $[t_{i^*-1}-h,t_{i^*}+h]$, if  $i^* > 1$, by the formula
$$
f_{\bf t}(u)
=
\frac 1{2h}\int_{t_{i^*}-h}^{t_{i^*}+h}\omega(|s-\tau_{i^*}|)ds-\omega (|u-\tau_{i^*}|).
$$
Next we continue this function to the whole segment $[a,b]$ as follows. We set $f_{\bf t}(u) = f_{\bf t}(t_{i^*}+h)$ on $[t_{i^*}+h,t_{i^*+1}-h]$; $f_{\bf t}(u) =  f_{\bf t}(t_{i^*}+t_{i^*+1}-u)$ on $[t_{i^*+1}-h,t_{i^*+1}+h]$; $f_{\bf t}(u) = f_{\bf t}(t_{i^*+1}+h)$ on $[t_{i^*+1}+h,t_{i^*+2}-h]$; $f_{\bf t}(u) =  f_{\bf t}(t_{i^*+1}+t_{i^*+2}-u)$ on $[t_{i^*+2}-h,t_{i^*+2}+h]$ and so on. The process goes analogously for $u<t_{i^*}-h$.

From the definition it follows that $f_{\bf t}\in H^h_{\bf t}$ and
$$
\max\limits_{t\in[a,b]}|f_{\bf t}(t)|
\geq 
f_{\bf t}(\tau_{i^*})
=
\frac 1{2h}\int_{t_{i^*}-h}^{t_{i^*}+h}\omega(|u-\tau_{i^*}|)du
\geq 
\frac 1{2h}\int_{({b-a})/({2n})-h}^{({b-a})/({2n})+h}\omega(u)du.
$$
\end{proof}

\begin{lemma}\label{l::lowerBound}
Let numbers $n\in\NN$, $h>0$ and ${\bf t} := (t_1,\dots,t_n)$ that satisfy~\eqref{knots} be given. Let $\omega$ be a concave modulus of continuity. Then there exists a function $f_{\bf t}\in H_{\bf t}^h$ such that 
$$\int_a^bf_{\bf t}(t)dt\geq  2n\left(1-\frac{2nh}{b-a}\right)\int_0^{(b-a)/ (2n)}\omega(t)dt.$$
\end{lemma}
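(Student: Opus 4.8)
The plan is to construct an explicit real‑valued $f_{\bf t}$, which suffices since $H^h_{\bf t}\subset H^\omega([a,b],\RR)$. Using~\eqref{tau} I would decompose $[a,b]$ into the $n$ cells $[\tau_i,\tau_{i+1}]$, $d_i:=\tau_{i+1}-\tau_i$, $\sum_{i=1}^n d_i=b-a$; by~\eqref{knots} and~\eqref{tau} the interval $[t_i-h,t_i+h]$ lies inside the $i$‑th cell and splits it into two ``free'' subintervals of nonnegative lengths $a_i=t_i-h-\tau_i$ and $c_i=\tau_{i+1}-t_i-h$ with $a_i+c_i=d_i-2h$. On each cell I would build a function with zero mean over $[t_i-h,t_i+h]$ and the largest possible integral over the cell, and then assemble these pieces into $f_{\bf t}$.

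For a single cell the crucial point is that, after subtracting the mean of $g$ over $[t_i-h,t_i+h]$ (which changes $\int_{\tau_i}^{\tau_{i+1}}g$ by $d_i$ times that mean and leaves $g$ in $H^\omega$, and which, conversely, is undone on functions with zero mean), maximizing $\int_{\tau_i}^{\tau_{i+1}}g$ over $g\in H^\omega([\tau_i,\tau_{i+1}],\RR)$ subject to $\int_{t_i-h}^{t_i+h}g=0$ equals the value of the functional $S$ of Theorem~\ref{th::KSLemmaGeneralEstimate} (for $X=\RR$) with $\psi_1=\chi_{[\tau_i,\tau_{i+1}]}$, $\psi_2=\tfrac{d_i}{2h}\chi_{[t_i-h,t_i+h]}$. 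The primitive $\Psi_i(s)=\int_{\tau_i}^{s}(\psi_1-\psi_2)$ is piecewise linear, vanishes at $\tau_i$ and $\tau_{i+1}$, and changes sign exactly once inside $[t_i-h,t_i+h]$, so its $\Sigma$‑representation consists of exactly two hat‑functions of opposite signs whose supports are adjacent and hence satisfy the length condition of the gluing criterion. Thus Theorem~\ref{th::KSLemmaGeneralEstimate} together with Remark~\ref{r::Sharpness} gives that this per‑cell maximum equals $\int_0^{d_i}|R'(\Psi_i;t)|\,\omega(t)\,dt$ and is attained by a function glued from two extremals of type~\eqref{extremalFunctions}. A direct computation of the rearrangement $R(\Psi_i;\cdot)$ (each of the two summands is linear with slope $-\lambda_i$, where $\lambda_i:=1-\tfrac{2h}{d_i}=\tfrac{d_i-2h}{d_i}$) shows that this per‑cell maximum equals $\lambda_i\bigl(\int_0^{a_i/\lambda_i}\omega(t)\,dt+\int_0^{c_i/\lambda_i}\omega(t)\,dt\bigr)$.

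Next I would glue the per‑cell extremals into one function $f_{\bf t}\in H^\omega([a,b],\RR)$, arranging that all cells share a common value at the knots $\tau_i$ and inserting constant pieces where needed, exactly in the spirit of the gluing procedure following Theorem~\ref{th::KSLemmaGeneralEstimate} and of the construction in the proof of Lemma~\ref{l::IdRecoveryLowerBound}. This is the step I expect to be the main obstacle: one must verify that the assembled function really belongs to $H^\omega([a,b],\RR)$ — the per‑cell extremals are ``mountain‑shaped'' near the knots, which is what makes this possible, but the check is technical — and that any loss at the junctions is absorbed by the slack in the estimate below; in the symmetric situation $a_i=c_i$, $d_1=\dots=d_n$ the per‑cell extremals already match by symmetry, and all other configurations carry strict slack. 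In any case $f_{\bf t}\in H^h_{\bf t}$ and $\int_a^b f_{\bf t}\ge\sum_{i=1}^{n}\lambda_i\bigl(\int_0^{a_i/\lambda_i}\omega(t)\,dt+\int_0^{c_i/\lambda_i}\omega(t)\,dt\bigr)$.

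Finally I would estimate this sum from below. Set $\Theta(x)=\int_0^x\omega(t)\,dt$; since $\omega$ is nondecreasing, $\Theta$ is convex, so $\Theta(a_i/\lambda_i)+\Theta(c_i/\lambda_i)\ge 2\Theta\bigl(\tfrac{a_i+c_i}{2\lambda_i}\bigr)=2\Theta(d_i/2)$, whence $\int_a^b f_{\bf t}\ge\sum_{i=1}^n G(d_i)$ with $G(d):=2\bigl(1-\tfrac{2h}{d}\bigr)\Theta(d/2)=2\Theta(d/2)-2h\,L(d/2)$, where $L(x):=\tfrac1x\Theta(x)=\int_0^1\omega(sx)\,ds$. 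The function $d\mapsto2\Theta(d/2)$ is convex ($\Theta$ is convex), and $L$ is concave, being an average of the concave functions $x\mapsto\omega(sx)$ (here the concavity of $\omega$ enters), so $d\mapsto-2h\,L(d/2)$ is convex; hence $G$ is convex on $(0,\infty)$. Jensen's inequality and $\sum_i d_i=b-a$ then give
\begin{equation*}
\sum_{i=1}^n G(d_i)\ \ge\ n\,G\Bigl(\tfrac{b-a}{n}\Bigr)\ =\ 2n\Bigl(1-\tfrac{2nh}{b-a}\Bigr)\Theta\Bigl(\tfrac{b-a}{2n}\Bigr)\ =\ 2n\Bigl(1-\tfrac{2nh}{b-a}\Bigr)\int_0^{(b-a)/(2n)}\omega(t)\,dt,
\end{equation*}
which is the asserted inequality.
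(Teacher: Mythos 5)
Your per-cell computation is correct (the reduction of the constrained maximization to $S(\chi_{[\tau_i,\tau_{i+1}]},\tfrac{d_i}{2h}\chi_{[t_i-h,t_i+h]})$, the two-hat $\Sigma$-representation, and the value $\lambda_i\bigl(\int_0^{a_i/\lambda_i}\omega+\int_0^{c_i/\lambda_i}\omega\bigr)$ all check out), and your closing convexity/Jensen argument is sound and parallels the one in the paper. But the step you yourself flag as the main obstacle --- assembling the $n$ per-cell extremals into a single function of $H^h_{\bf t}$ --- is a genuine gap, not a technicality. The two tools available for gluing are in direct conflict with the constraints: adding a constant $c_i$ on the $i$-th cell to match values at the knots $\tau_i$ destroys the condition $\int_{t_i-h}^{t_i+h}f=0$ (it shifts that integral by $2hc_i$), while without such constants the per-cell extremals have boundary values depending on $(a_i,c_i,d_i)$ in a way that gives no reason for them to agree at the junctions; moreover, even agreement at the junctions is not sufficient for membership in $H^\omega([a,b],\RR)$ across cells (that is exactly what the criterion of Lemma~4.1 of~\cite{stepanets2018} is for). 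Your fallback claim that ``any loss at the junctions is absorbed by the slack'' in the Jensen step is unquantified and cannot be taken on faith: the gluing loss is not even defined until you say how the per-cell functions are modified, and there are non-symmetric configurations (e.g.\ all $d_i$ equal but some $a_i\neq c_i$) where the available slack is small while the mismatch need not be.

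The paper sidesteps the whole difficulty by \emph{not} using the per-cell optima. It fixes one profile $y_0$ --- the extremal~\eqref{extremalFunctions} for the uniform half-cell data $\psi_1=\tfrac1h\chi_{[0,h]}$, $\psi_2=\tfrac{2n}{b-a-2nh}\chi_{[h,(b-a)/(2n)]}$, extended evenly and by a constant --- translates it to every knot, and sets $y_1=\min_k y_0(\cdot-t_k)$. The pointwise minimum of $H^\omega$ functions is again in $H^\omega$, so no gluing criterion is needed; and because every window $[t_k-h,t_k+h]$ lies in the cell where $y_1$ coincides with the same translated profile, a \emph{single} additive constant $C$ with $\int_{-h}^{h}(y_0+C)=0$ enforces all $n$ moment conditions at once. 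The resulting per-cell integrals are suboptimal for non-uniform partitions, but the convexity of $J(t)=\int_0^t y_0$ and Jensen's inequality (applied to the $2n$ half-cell lengths $s_k-t_k$, $t_k-s_{k-1}$) still deliver exactly the bound $2n\bigl(1-\tfrac{2nh}{b-a}\bigr)\int_0^{(b-a)/(2n)}\omega(t)\,dt$. If you want to rescue your argument, replace the per-cell extremals by this common translated profile; otherwise the construction of $f_{\bf t}$ is not established.
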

\begin{proof}
Consider the even function $y_0$, defined on $[0,\infty)$ by the following equation.
\begin{equation}\label{lowerBound.1}
y_0(t) = 
\begin{cases}
		-\frac{2nh}{b-a}\omega\left(\frac {b-a} {2nh}(h-t)\right), & t\in [0,h], \\  
        \frac{b-a-2nh}{b-a}\omega\left(\frac {b-a}{{b-a}-2nh}(t-h)\right), & t\in \left[h, \frac {b-a} {2n}\right], \\ 
        y_0\left(\frac {b-a} {2n}\right), & t > \frac {b-a} {2n}.
\end{cases}
\end{equation}
Note, that the restriction of the function $y_0$ to the segment $\left[0,({b-a})/({2n})\right]$ is the function built according to formula~\eqref{extremalFunctions} with $\psi_1 =\frac 1h\chi_{[0,h]}$  and $\psi_2 = \frac {2n}{b-a-2nh} \chi_{\left[h,({b-a})/({2n})\right]}$.
Hence $y_0(t)\in H^\omega([0,(b-a)/(2n)],\RR)$, since $\omega$ is a concave modulus of continuity. 
Set $$y_1(t) := \min\{y_0(t-t_1), y_0(t-t_2),\dots,y_0(t-t_n)\},\; t\in\RR.$$ 

Set $s_0:=a$, $s_i = (t_i+t_{i+1})/2$, $i=1,\ldots, n-1$, $s_n:=b$.
Then  $y_1(t) = y_0(t-t_k)$, $t\in [s_{k-1}, s_k]$, $k=1,\dots,n$. Note, that  $y_1(t)\in H^\omega([a,b],\RR)$. Set $y(t):=y_1(t) + C$, where the constant $C$ is chosen in such a way that $\int_{-h}^h (y_0(t)+C)dt=0$. This implies 
\begin{equation}\label{lowerBound.2}
\int_{t_k-h}^{t_k+h} y(t)dt=0,\;k=1,\dots,n.
\end{equation}
Hence $y\in H_{\bf t}^h$. We estimate the integral $\int_a^by(t)dt$ from below. The function $y_0$ is even and  $J(t) := \int_{0}^{t}y_0(s)ds$ is convex, since $y_0$ is non-decreasing on $[0,\infty)$. Hence
\begin{gather*}
\int_{a}^{b} y(t)dt=C(b-a)+\int_{a}^{b} y_1(t)dt=C(b-a)+\sum\nolimits_{k=1}^n\int_{s_{k-1}}^{s_k}y_0(t-t_k)dt=
C(b-a)
\\+\sum\nolimits_{k=1}^n\int_{s_{k-1}-t_k}^{s_k-t_k}y_0(t)dt
=
 C(b-a)+\sum\nolimits_{k=1}^n J(s_k-t_k) +\sum\nolimits_{k=1}^n J(t_k - s_{k-1}) \geq C(b-a)
 \\
 +
 2n J\left(\frac 1 {2n}\sum\nolimits_{k=1}^n (s_k-t_k) + \frac 1 {2n}\sum\nolimits_{k=1}^n (t_k - s_{k-1})\right)=C(b-a)+2nJ\left(\frac {b-a} {2n}\right).
\end{gather*}
Using~\eqref{lowerBound.1} and~\eqref{lowerBound.2} to compute the right-hand side of the latter inequality, we obtain
$$C(b-a)+2nJ\left(\frac {b-a} {2n}\right) = 2n\left(1-\frac{2nh}{b-a}\right)\int_0^{(b-a)/ (2n)}\omega(t)dt$$
and the lemma is proved.\end{proof}
\subsection{Optimal recovery of the  convexifying operator.}
\begin{theorem}
Let numbers $n\in\NN$, $h>0$ and ${\bf t} := (t_1,\dots,t_n)$ that satisfy~\eqref{knots} be given. For the convexifying operator $P$ and arbitrary modulus of continuity $\omega$
$$
 \inf\limits_{\bf t}{\mathcal E}(P,H^\omega([a,b],X), I_{\bf t}, B([a,b],X))
 =
 \frac 1{2h}\int_{({b-a})/({2n})-h}^{({b-a})/({2n})+h}\omega(u)du.
 $$
The optimal informational operator is $I_{{\bf t}^*}$ and the optimal recovery method is 
$$\Phi^*(I_{{\bf t}^*}(f))(u) = \frac{1}{2h}\int_{t_k^* - h}^{t_k^* + h}f(t)dt,\;u\in [\tau_{k}({\bf t}^*),\tau_{k+1}({\bf t}^*)],$$
where the vectors ${\bf t}^*$ and $\tau = \tau({\bf t}^*)$ are defined in~\eqref{t-star} and~\eqref{tau} respectively.
\end{theorem}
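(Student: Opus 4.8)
The plan is to establish matching lower and upper bounds for the quantity $\inf_{\bf t}\mathcal E(P,H^\omega([a,b],X),I_{\bf t},B([a,b],X))$.

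\emph{Lower bound.} Fix an admissible vector $\bf t$ and, using the Assumption, choose $x\in X^{\rm c}\cap X^{\rm inv}$ with $h_X(x,\theta)=1$. Let $f_{\bf t}\in H_{\bf t}^h$ be the function produced by Lemma~\ref{l::IdRecoveryLowerBound} and let $t_0$ be a point of maximum of $|f_{\bf t}|$ on $[a,b]$; replacing $f_{\bf t}$ by $-f_{\bf t}$ if necessary we may assume
\[
A:=f_{\bf t}(t_0)\ge\frac1{2h}\int_{(b-a)/(2n)-h}^{(b-a)/(2n)+h}\omega(u)\,du>0.
\]
By Lemma~\ref{l::LspaceValuedFunc}, $(f_{\bf t})_x,(-f_{\bf t})_x\in H^\omega([a,b],X)$, and by~\eqref{integralOfSimpleFunc} together with $\int_{t_i-h}^{t_i+h}f_{\bf t}=0$ we get $\int_{t_i-h}^{t_i+h}(\pm f_{\bf t})_x(t)\,dt=\theta$ for each $i$, hence $I_{\bf t}\big((f_{\bf t})_x\big)=I_{\bf t}\big((-f_{\bf t})_x\big)$. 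Since $Ax,Ax'\in X^{\rm c}$, the convexifying operator fixes $(f_{\bf t})_x(t_0)=Ax$ and $(-f_{\bf t})_x(t_0)=Ax'$, so Lemma~\ref{l::errorOfRecoveryFromBelow}, applied with $\Lambda$ the operator $f\mapsto P\circ f$, target metric space $B([a,b],X)$ and information $I_{\bf t}$, gives
\[
\mathcal E(P,H^\omega([a,b],X),I_{\bf t},B([a,b],X))\ge\frac12\,h_X(Ax,Ax')=\frac A2\,h_X(x,x')=A\,h_X(x,\theta)=A ,
\]
where $h_X(x,x')=2h_X(x,\theta)$ follows from~\eqref{ax::LSemiIsotropic}, the triangle inequality and Lemma~\ref{l::inversElementNorm} (so no isotropy is needed). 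Taking the infimum over $\bf t$ yields the lower estimate $\frac1{2h}\int_{(b-a)/(2n)-h}^{(b-a)/(2n)+h}\omega(u)\,du$ for the left-hand side.

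\emph{Upper bound.} I would evaluate the error of the explicit pair $(I_{{\bf t}^*},\Phi^*)$. A direct computation from~\eqref{tau} and~\eqref{t-star} gives $\tau_k({\bf t}^*)=a+(k-1)(b-a)/n$, so $t_k^*-\tau_k({\bf t}^*)=\tau_{k+1}({\bf t}^*)-t_k^*=(b-a)/(2n)$ for every $k$ (and $h\le(b-a)/(2n)$, so ${\bf t}^*$ and all segments $[t_k^*-h,t_k^*+h]$ are admissible). For $f\in H^\omega([a,b],X)$ and $u\in[\tau_k({\bf t}^*),\tau_{k+1}({\bf t}^*)]$, Corollary~\ref{c::valueIntegralDeviation} with $t=u$ and $[c,d]=[t_k^*-h,t_k^*+h]$ gives
\[
h_X\!\left(P(f(u)),\frac1{2h}\int_{t_k^*-h}^{t_k^*+h}f(s)\,ds\right)\le\frac1{2h}\int_{t_k^*-h}^{t_k^*+h}\omega(|s-u|)\,ds .
\]
Putting $\varphi(r):=\int_{-h}^{h}\omega(|\sigma+r|)\,d\sigma$, the function $\varphi$ is even and non-decreasing on $[0,\infty)$ (its derivative there is $\omega(h+r)-\omega(|h-r|)\ge0$), and after the substitution $\sigma=s-t_k^*$ the right-hand side above equals $\frac1{2h}\varphi(|u-t_k^*|)$. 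Since $|u-t_k^*|\le(b-a)/(2n)$, this is at most $\frac1{2h}\varphi\big((b-a)/(2n)\big)=\frac1{2h}\int_{(b-a)/(2n)-h}^{(b-a)/(2n)+h}\omega(v)\,dv$. Taking suprema over $u$ and then over $f$, and using $\mathcal E(P,H^\omega,I_{{\bf t}^*},B)\le\mathcal E(P,H^\omega,I_{{\bf t}^*},\Phi^*,B)$, we obtain the matching upper estimate.

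Combining the two bounds proves the asserted equality; moreover the lower-bound argument shows the error of $I_{{\bf t}^*}$ already equals the infimum (so $I_{{\bf t}^*}$ is a best information operator) and the upper-bound argument shows $\Phi^*$ is optimal for it. The changes of variables and the verification that $P\circ f\in B([a,b],X)$ for $f\in H^\omega([a,b],X)$ are routine. The point I expect to need the most care is that the two bounds must match on \emph{every} cell $[\tau_k,\tau_{k+1}]$ simultaneously: Corollary~\ref{c::valueIntegralDeviation} is worst at a cell endpoint, where the distance to $t_k^*$ is exactly $(b-a)/(2n)$, and it is precisely the equispaced configuration $\bf t^*$ that equalizes all $2n$ half-gaps $t_k-\tau_k$, $\tau_{k+1}-t_k$; any other admissible $\bf t$ has some half-gap of length $\ge(b-a)/(2n)$, which is exactly the feature Lemma~\ref{l::IdRecoveryLowerBound} exploits to prevent the optimal error from decreasing.
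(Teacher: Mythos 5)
Your proof is correct and follows essentially the same route as the paper: the lower bound via Lemma~\ref{l::IdRecoveryLowerBound}, Lemma~\ref{l::LspaceValuedFunc} and Lemma~\ref{l::errorOfRecoveryFromBelow} applied to the pair $(f_{\bf t})_x$, $(-f_{\bf t})_x=(f_{\bf t})_{x'}$, and the upper bound via Corollary~\ref{c::valueIntegralDeviation} with the worst case at the cell endpoints $\tau_k({\bf t}^*)$. Your added justification of the monotonicity of $r\mapsto\int_{-h}^{h}\omega(|\sigma+r|)\,d\sigma$ and the observation that the identity $h_X(x,x')=2h_X(x,\theta)$ does not require isotropy are fine refinements of steps the paper leaves implicit.
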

\begin{proof}
For $f\in H^\omega([a,b],X)$, $t\in [\tau_i({\bf t}^*),\tau_{i+1}({\bf t}^*)]$, and $i\in\{1,\ldots, n\}$,  Corollary~\ref{c::valueIntegralDeviation} implies
\begin{multline*}
h_X(P(f(t)),\Phi^*(I_{{\bf t}^*})(t))
=
h_X\left(P(f(t)),\frac 1{2h}\int_{t_i^*-h}^{t_i^*+h}f(u)du\right)
\le
\frac 1{2h}\int_{t_i^*-h}^{t_i^*+h}\omega(|u-t|)du
\\ \le
\frac 1{2h}\int_{t_i^*-h}^{t_i^*+h}\omega(|u-\tau_i({\bf t}^*)|)du
=
\frac 1{2h}\int_{({b-a})/({2n})-h}^{({b-a})/({2n})+h}\omega(u)du.
\end{multline*}
Hence
$$
{\mathcal E}(P,H^\omega([a,b],X), I_{{\bf t}^*}, B([a,b],X))
\le 
\frac 1{2h}\int_{({b-a})/{(2n)}-h}^{({b-a})/({2n})+h}\omega(u)du.
$$
Choose $x\in X^{\rm c}\cap X^{\rm {inv}}$, $h_X(x,\theta)=1$, and for the function $f_{\bf t}$ from Lemma~\ref{l::IdRecoveryLowerBound} set
\begin{equation}\label{LValuedExtremalFunc}
    \underline{F}_n=(f_{\bf t})_{x'} \text{ and }
\overline{F}_n=(f_{\bf t})_x.
\end{equation}
Note, that the functions $\underline{F}_n$ and $\overline{F}_n$ are convex-valued. Using Lemma~\ref{l::errorOfRecoveryFromBelow}, we obtain
\begin{multline*}
{\mathcal E}(P,H^\omega([a,b],X), I_{\bf t}, B([a,b],X))
\ge 
\frac 12\max\limits_{t\in[a,b]}h_X(\overline{F}_n(t),\underline{F}_n(t))
\\ =
\max\limits_{t\in[a,b]}|f_{\bf t}(t)|
\geq 
\frac 1{2h}\int_{({b-a})/({2n})-h}^{({b-a})/({2n})+h}\omega(u)du.
\end{multline*}
The theorem is proved. \end{proof}

\subsection{Optimal recovery of the integral}
\begin{theorem}
Let numbers $n\in\NN$, $h>0$, and ${\bf t} := (t_1,\dots,t_n)$ that satisfy~\eqref{knots} be given.  For a concave modulus of continuity $\omega$,
$$
 \inf\limits_{\bf t}{\mathcal E}(\Lambda,H^\omega([a,b],X), I_{\bf t}, X)
 =
 2n\left(1-\frac{2nh}{b-a}\right)\int_0^{(b-a)/ (2n)}\omega(t)dt.
 $$
The optimal informational operator is $I_{{\bf t}^*}$ and the optimal recovery method is 
$$\Phi^*(I_{{\bf t}^*}(f)) = \frac {b-a}{n}\sum\nolimits_{k=1}^n\frac{1}{2h}\int_{t_k^* - h}^{t_k^* + h}f(t)dt,$$
where the vector ${\bf t}^*$  is defined in~\eqref{t-star}.
\end{theorem}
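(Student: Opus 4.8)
The plan is to prove the two matching bounds separately, reusing the sharp Ostrowski‑type estimate of Corollary~\ref{c::symmetricalInterval} for the upper bound and the real‑valued extremal function of Lemma~\ref{l::lowerBound} for the lower bound. Throughout, $x\in X^{\rm c}\cap X^{\rm inv}$ denotes a fixed element with $h_X(x,\theta)=1$, which exists by the standing Assumption; we may also assume $\omega\not\equiv 0$, the opposite case being trivial.

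First I would establish that $\inf_{\bf t}{\mathcal E}(\Lambda,H^\omega([a,b],X),I_{\bf t},X)$ does not exceed the claimed value by computing the error of the concrete pair $(I_{{\bf t}^*},\Phi^*)$. Writing $\tau=\tau({\bf t}^*)$ (see~\eqref{tau},~\eqref{t-star}) and using that the knots ${\bf t}^*$ are equally spaced, one checks $\tau_{k+1}-\tau_k=(b-a)/n$ and that $[\tau_k,\tau_{k+1}]$ and $[t_k^*-h,t_k^*+h]$ are concentric with common midpoint $t_k^*$, the left gap being $(b-a)/(2n)-h$. By Property~4 of the integral, $\Lambda(f)=\sum_{k=1}^n\int_{\tau_k}^{\tau_{k+1}}f(t)dt$ and $\Phi^*(I_{{\bf t}^*}(f))=\sum_{k=1}^n\frac{\tau_{k+1}-\tau_k}{2h}\int_{t_k^*-h}^{t_k^*+h}f(t)dt$, so iterating $h_X(x+z,y+w)\le h_X(x,y)+h_X(z,w)$ and applying Corollary~\ref{c::symmetricalInterval} termwise yields, for every $f\in H^\omega([a,b],X)$,
\[
h_X\bigl(\Lambda(f),\Phi^*(I_{{\bf t}^*}(f))\bigr)\le\sum_{k=1}^n\frac{4\bigl((b-a)/(2n)-h\bigr)}{(b-a)/n}\int_0^{(b-a)/(2n)}\omega(t)dt=2n\Bigl(1-\frac{2nh}{b-a}\Bigr)\int_0^{(b-a)/(2n)}\omega(t)dt.
\]
Taking the supremum over $f$ and then the infimum over admissible ${\bf t}$ gives the upper bound.

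Next I would prove the matching lower bound for an arbitrary ${\bf t}$ satisfying~\eqref{knots}. Let $f_{\bf t}\in H_{\bf t}^h$ be the function from Lemma~\ref{l::lowerBound} and put $I_0:=\int_a^bf_{\bf t}(t)dt>0$. Set $\overline{F}_n=(f_{\bf t})_x$ and $\underline{F}_n=(f_{\bf t})_{x'}$; by Lemma~\ref{l::LspaceValuedFunc} both belong to $H^\omega([a,b],X)$. Since $\int_{t_i-h}^{t_i+h}f_{\bf t}(t)dt=0$ for each $i$, formula~\eqref{integralOfSimpleFunc} applied on each of these intervals gives $\int_{t_i-h}^{t_i+h}\overline{F}_n(t)dt=\int_{t_i-h}^{t_i+h}\underline{F}_n(t)dt=\theta$, hence $I_{\bf t}(\overline{F}_n)=I_{\bf t}(\underline{F}_n)$. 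Using~\eqref{integralOfSimpleFunc} once more together with $I_0>0$, we get $\Lambda(\overline{F}_n)=I_0\cdot x$ and $\Lambda(\underline{F}_n)=I_0\cdot x'$; since $I_0\cdot x\in X^{\rm c}\cap X^{\rm inv}$ with inverse $I_0\cdot x'$, Lemma~\ref{l::distBetweenInverseElems} and $h_X(I_0x,\theta)=I_0h_X(x,\theta)$ give $h_X(\Lambda(\overline{F}_n),\Lambda(\underline{F}_n))=2I_0$. Lemma~\ref{l::errorOfRecoveryFromBelow} then yields
\[
{\mathcal E}(\Lambda,H^\omega([a,b],X),I_{\bf t},X)\ge\frac12 h_X\bigl(\Lambda(\overline{F}_n),\Lambda(\underline{F}_n)\bigr)=I_0\ge 2n\Bigl(1-\frac{2nh}{b-a}\Bigr)\int_0^{(b-a)/(2n)}\omega(t)dt.
\]
Taking the infimum over ${\bf t}$ gives the lower bound; combined with the first step this proves the asserted equality, and the computation in the upper‑bound step shows that the infimum is attained at $I_{{\bf t}^*}$ with the method $\Phi^*$.

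Since the two substantive ingredients are already available, the work is mostly organizational. The points to be careful about are: verifying that for ${\bf t}^*$ the intervals $[\tau_k,\tau_{k+1}]$ and $[t_k^*-h,t_k^*+h]$ are genuinely concentric — this is exactly what makes Corollary~\ref{c::symmetricalInterval} reproduce the constant $2n\bigl(1-\frac{2nh}{b-a}\bigr)\int_0^{(b-a)/(2n)}\omega(t)dt$ after summation — and tracking the sign of $I_0$ so that, via~\eqref{integralOfSimpleFunc}, the $L$‑space distance $h_X(\Lambda(\overline{F}_n),\Lambda(\underline{F}_n))$ collapses to $2I_0$. Concavity of $\omega$ enters only through Corollary~\ref{c::symmetricalInterval} and Lemma~\ref{l::lowerBound}.
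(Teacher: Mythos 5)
Your proposal is correct and follows essentially the same route as the paper: the upper bound by splitting $[a,b]$ into the $n$ concentric subintervals and applying Corollary~\ref{c::symmetricalInterval} termwise, and the lower bound via Lemma~\ref{l::lowerBound}, the functions $(f_{\bf t})_x$, $(f_{\bf t})_{x'}$, and Lemma~\ref{l::errorOfRecoveryFromBelow}. Your added care about the positivity of $I_0$ and the reduction of $h_X(\Lambda(\overline{F}_n),\Lambda(\underline{F}_n))$ to $2I_0$ only makes explicit what the paper leaves implicit.
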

\begin{proof}
For each $f\in H^\omega([a,b],X)$ we obtain, using Corollary~\ref{c::symmetricalInterval},
\begin{multline*}
h_X\left(
\int_{a}^bf(t)dt, \Phi^*(I_{{\bf t}^*}(f))\right)
\leq
\sum_{k=1}^nh_X\left(
{\int_{ \frac{(k-1)(b-a)}{n}}^{ \frac{k(b-a)}{n}}f(t)dt},
\frac {b-a}{2nh}
{\int_{ \frac{(2k-1)(b-a)}{2n}- h}^{ \frac{(2k-1)(b-a)}{2n} + h}f(t)dt}\right)
\\ \leq 
\sum_{k=1}^n 2\left(1-\frac{2nh}{b-a}\right)
\int_0^{(b-a)/ (2n)}\omega(t)dt
=
2n\left(1-\frac{2nh}{b-a}\right)\int_0^{(b-a)/(2n)}\omega(t)dt.
\end{multline*}
Hence
$
 \inf\limits_{\bf t}{\mathcal E}(\Lambda,H^\omega([a,b],X), I_{\bf t}, X)
\leq 
2n\left(1-\frac{2nh}{b-a}\right)\int_0^{(b-a)/ (2n)}\omega(t)dt.
$
Now let an information set ${\bf t}$ be fixed and $f_{\bf t}$ be built according to Lemma~\ref{l::lowerBound}. Using functions~\eqref{LValuedExtremalFunc} with such $f_{\bf t}$,  Lemmas~\ref{l::errorOfRecoveryFromBelow} and~\ref{l::lowerBound}, we obtain
\begin{gather*}
{\mathcal E}(\Lambda,H^\omega([a,b],X), I_{\bf t}, X)
\\ \geq
\frac 12h_X\left(\int_a^b \overline{F}_n(t)dt , \int_a^b \underline{F}_n(t)dt\right) 
 =
\int_a^b f_{\bf t}(t)dt
\geq 
2n\left(1 - \frac{2nh}{b-a}\right)\int_0^{ (b-a)/(2n)}\omega(t)dt.
\end{gather*}
\end{proof}

\section{On optimal recovery problems on the class $W^1H^\omega ([a,b],X)$}\label{s::PolylineApproximation}
\begin{definition}\label{W1Homega}
Given a modulus of continuity $\omega(\cdot)$, denote by $W^1H^\omega([a,b],X)$ the class of functions $f$ of the form $$f(t)=x+\int_a^t\phi(s)ds, \text{ where } \phi\in H^\omega([a,b],X),\, x\in X^c.$$
\end{definition}
Note, that such a function $f$ is convex-valued, has Hukuhara type derivative $\DerH f$ and $\DerH f=P(\phi)$, see \cite[Lemma~2.24]{babenko19}, where $P$ is the convexifying operator.
\subsection{Real-valued extremal functions}
Let a partition ${\bf t} = (t_0,\ldots, t_n)$ of the segment $[a,b]$ be given, 
\begin{equation}\label{partition}
a = t_0<t_1<\ldots <t_{n-1}<t_n = b.
\end{equation}

\begin{lemma}\label{l::omegaSpline}
Let $\omega$ be a concave modulus of continuity and a partition ${\bf t}$ be given. Then there exists a function $f_{\bf t}\in W^1H^\omega ([a,b],\RR)$ such that $f_{\bf t}(t_i) = 0$, $i = 0,1,\ldots, n$, and  
\begin{equation}\label{splineNorm}
    \max\limits_{t\in [a,b]}|f_{\bf t}(t)|\geq \frac 14\int_0^{(b-a)/n}\omega(u)du.
\end{equation}
If the partition ${\bf t}$ is uniform, then inequality~\eqref{splineNorm} becomes equality.
\end{lemma}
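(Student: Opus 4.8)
The plan is to construct $f_{\bf t}$ explicitly on each subinterval $[t_{i-1},t_i]$ of the partition as a scaled and translated copy of a single ``model'' function, so that $f_{\bf t}$ vanishes at every knot $t_i$ and its derivative stays in $H^\omega([a,b],\RR)$. On a subinterval of length $\ell = t_i - t_{i-1}$ the natural model is the function $u \mapsto \int_0^u g_\ell(s)\,ds$ where $g_\ell\in H^\omega([0,\ell],\RR)$ is built from $\omega$ so that it is odd about the midpoint $\ell/2$ (forcing the integral to vanish at both endpoints) and so that $g_\ell$ increases from $-\tfrac12\omega(\ell/2)$ up to $\tfrac12\omega(\ell/2)$ as quickly as the modulus $\omega$ allows — concretely $g_\ell(s) = \tfrac12[\omega(|s-\ell/2|\vee 0)\operatorname{sgn}(s-\ell/2)]$-type profile, normalized using concavity of $\omega$ exactly as the extremal function $g$ in \eqref{extremalFunctions} is built in Lemma \ref{l::KSLemma}. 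Concavity of $\omega$ is what guarantees that this ``steepest admissible'' $g_\ell$ indeed lies in $H^\omega$; without it the construction would fail. Gluing the pieces together is automatic here since each piece already vanishes at the knots, so no additive constants are needed and the resulting $f_{\bf t}$ is continuous, piecewise-$C^1$, vanishes at all $t_i$, and has $\DerH f_{\bf t} = f_{\bf t}' \in H^\omega([a,b],\RR)$ on each piece; one checks the no-jump condition across knots holds because $f_{\bf t}'$ takes the value $\tfrac12\omega(\ell_i/2)$ at the right end of $[t_{i-1},t_i]$ and $-\tfrac12\omega(\ell_{i+1}/2)$ at the left end of the next piece, and the difference of these is controlled by $\omega$ of the gap — but since the knots are shared this is the value at a single point, so there is genuinely nothing to glue across. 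Hence $f_{\bf t}\in W^1H^\omega([a,b],\RR)$.

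Next I would prove the lower bound \eqref{splineNorm}. Among the $n$ subintervals at least one, say $[t_{i^*-1},t_{i^*}]$, has length $\ell_{i^*} \geq (b-a)/n$. On that subinterval the maximum of $|f_{\bf t}|$ is attained at the midpoint and equals $\int_0^{\ell_{i^*}/2} g_{\ell_{i^*}}(\ell_{i^*}/2 - \cdot)$-type quantity; unwinding the normalization this midpoint value equals $\tfrac12\int_0^{\ell_{i^*}/2}\omega(u)\,du$ after the change of variables implicit in the model's definition. Wait — I must be careful with the constant: the bound claimed is $\tfrac14\int_0^{(b-a)/n}\omega(u)\,du$, so the model should be normalized so that the midpoint value on an interval of length $\ell$ is $\tfrac14\int_0^{\ell}\omega(u)\,du$ (this is the standard value of the ``$\omega$-spline'' half-bump; cf. \cite[\S~5,6]{SplinesInApproxTh} and the analogous computation in Lemma \ref{l::lowerBound} with $h=0$). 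Since $\ell_{i^*}\geq (b-a)/n$ and $t\mapsto \int_0^t\omega$ is nondecreasing, monotonicity gives $\max_{[a,b]}|f_{\bf t}| \geq \tfrac14\int_0^{\ell_{i^*}/2}\omega \cdot(\text{correct combinatorial factor}) \geq \tfrac14\int_0^{(b-a)/n}\omega(u)\,du$.

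Finally, for the equality statement when ${\bf t}$ is uniform: then every subinterval has length exactly $(b-a)/n$, so by the explicit formula $\max_{[t_{i-1},t_i]}|f_{\bf t}|$ has the same value $\tfrac14\int_0^{(b-a)/n}\omega(u)\,du$ on each piece, and this common value is exactly the right-hand side of \eqref{splineNorm}; hence the inequality is an equality. The main obstacle I anticipate is pinning down the precise normalization constant of the model function so that the midpoint value comes out to be $\tfrac14\int_0^\ell\omega$ rather than $\tfrac12\int_0^{\ell/2}\omega$ or some other variant — these differ by a change of variable inside $\omega$, and getting the factor of $4$ right requires matching the construction in \eqref{extremalFunctions} carefully against the normalization $h_X(x,\theta)=1$ and the halving at the midpoint. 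Once the model is correctly specified, verifying membership in $W^1H^\omega$ via concavity of $\omega$ and reading off the bound are both routine.
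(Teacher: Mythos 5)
There is a genuine gap, and it is exactly at the point you wave away. Your model forces the derivative $g=f_{\bf t}'$ to take the value $+\tfrac12\omega(\ell_i/2)$ at the right endpoint of $[t_{i-1},t_i]$ and $-\tfrac12\omega(\ell_{i+1}/2)$ at the left endpoint of $[t_i,t_{i+1}]$ --- that is, two different one\nobreakdash-sided values \emph{at the same point} $t_i$. Your remark that ``the difference of these is controlled by $\omega$ of the gap'' is backwards: the gap is $0$ and $\omega(0)=0$, so membership in $H^\omega([a,b],\RR)$ forces the two values to coincide, whereas your construction produces a jump of size $\tfrac12\bigl(\omega(\ell_i/2)+\omega(\ell_{i+1}/2)\bigr)>0$. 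Hence $g\notin H^\omega([a,b],\RR)$ and $f_{\bf t}\notin W^1H^\omega([a,b],\RR)$. Flipping signs on alternate subintervals only reduces the mismatch to $\tfrac12|\omega(\ell_i/2)-\omega(\ell_{i+1}/2)|$, which still vanishes only when $\ell_i=\ell_{i+1}$; so your construction is valid essentially only for the uniform partition, while the lemma is needed (and used, via $\inf_{\bf t}$ in the recovery theorem) for an \emph{arbitrary} partition. A localized repair --- putting a derivative profile that vanishes at the endpoints of the single longest subinterval and is zero elsewhere --- restores continuity but yields a maximum of only $\tfrac12\int_0^{\ell/2}\omega(u)\,du\le\tfrac14\int_0^{\ell}\omega(u)\,du$ (with equality only for constant $\omega$), so it does not reach the constant in \eqref{splineNorm}.

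The paper's proof resolves precisely this obstruction by \emph{not} anchoring the sign changes of the derivative at the knots $t_i$: it takes $g_\xi$ built from bumps $\tfrac12\omega(2|\cdot-\eta_k|)$ centered at free points $\eta_1,\dots,\eta_n$ with alternating signs (so $g_\xi$ is continuous and lies in $H^\omega$ by concavity), and then invokes the Borsuk antipodal theorem on the odd map $\xi\mapsto(G_\xi(t_1),\dots,G_\xi(t_n))$ to find a parameter $\xi^*$ for which $G_{\xi^*}$ vanishes at all prescribed knots. The lower bound \eqref{splineNorm} then comes not from inspecting one long subinterval but from a global estimate: Jensen's inequality applied to the convex function $u\mapsto\int_0^u\omega$ bounds the total variation $\bigvee_a^b G_{\xi^*}$ from below by $\tfrac n2\int_0^{(b-a)/n}\omega(u)\,du$, and $2n\max_i|G_{\xi^*}(\eta_i)|\ge\bigvee_a^b G_{\xi^*}$ finishes the argument. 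Your normalization discussion and the equality claim for the uniform partition are consistent with this, but without an existence mechanism such as Borsuk's theorem (or an equivalent fixed\nobreakdash-point/zero\nobreakdash-counting argument) the required $f_{\bf t}$ is not constructed for general ${\bf t}$.
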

\begin{proof}
In the space $\RR^{n+1}$ consider the sphere
 $$
 \mathbb{S}^n=\left\{ \xi=(\xi_1,\ldots ,\xi_{n+1})\in \RR^{n+1}\colon \sum\nolimits_{i=1}^{n+1}|\xi_i|=b-a\right\}.
 $$
  Each $\xi\in \mathbb{S}^n$ generates a set of points on the segment $[a,b]$
  $$
  \eta_0(\xi) = a,\;\eta_1(\xi)=a+|\xi_1|,\; \eta_2(\xi)=\eta_1+|\xi_2|,\ldots , \eta_n(\xi)=\eta_{n-1}+|\xi_n|,\; \eta_{n+1}(\xi) = b.
  $$
  Let $h(t) = \frac 12 \omega(2|t|)$, $t\in \RR$. For $\xi\in \mathbb{S}^n$, set $h_\xi(t) = \min\limits_{k = \overline{1,n}} h(t-\eta_k(\xi))$ and   
  $g_\xi(t) = h_\xi(t)\cdot {\rm sgn\,}\xi_i$ for $t\in [\eta_{i-1}(\xi),\eta_{i}(\xi)]$, $i = 1, \ldots, n+1$. Then, due to concavity of $\omega$, $g_\xi\in H^\omega([a,b],\RR)$.
  Set
  $$
   G_\xi(t)=\int_a^tg_\xi(u)du
  $$
  and define the vector field on $\mathbb{S}^n$, by the formula
  $
  \xi \to (G_\xi(t_1),\ldots ,G_\xi(t_n)).
  $
  It is easy to see, that this field is continuous and odd. The Borsuk theorem implies that there exists $\xi^*=\xi^*({\bf t})=(\xi^*_1,\ldots ,\xi^*_{n+1})\in \mathbb{S}^n$ such that 
  $
  G_{\xi^*}(t_1)=G_{\xi^*}(t_2)=\ldots =G_{\xi^*}(t_n)=0.
  $
  Moreover, $G_{\xi^*}(a)=0$. Hence the function $ G_{\xi^*}$ has at least  $n+1$ zeros on $[a,b]$ and thus $g_{\xi^*}=G'_{\xi^*}$ has at least $n$ changes of sign. Since $g_{\xi^*}$ can change its sign only at the points $\eta_1(\xi^*),\ldots ,\eta_n(\xi^*)$, all these points are distinct, $g_{\xi^*}$ has exactly $n$ sign changes on $[a,b]$, and $\eta_i(\xi^*)$ is the unique point of local extremum of $G_{\xi^*}$ inside the segment $[t_{i-1},t_i]$, $i=1,\ldots, n$.
    Since $\omega$ is non-decreasing, the function $u\to\int_0^{u}\omega(t)dt$ is convex, hence applying the Jensen inequality we obtain
  \begin{gather*}
  \bigvee\nolimits_a^bG_{\xi^*}=\int_a^b\left|g_{\xi^*}(u)\right|du
  =
 \frac 12\int_0^{|\xi^*_1|}\omega(2u)du+
 2\frac 12\sum\nolimits_{i=2}^{n}\int_0^{|\xi^*_i|/2}\omega(2u)du
 \\  +\frac 12\int_0^{|\xi^*_{n+1}|}\omega(2u)du
 \ge 
 \int_0^{(|\xi^*_1|+|\xi^*_{n+1}|)/2}\omega(2u)du
  +
 \sum\nolimits_{i=2}^{n}\int_0^{|\xi^*_i|/2}\omega(2u)du
  \\  = 
  \frac{1}2\int_0^{|\xi^*_1|+|\xi^*_{n+1}|}\omega(u)du+\frac{1}2\sum\nolimits_{i=2}^{n}\int_0^{|\xi^*_i|}\omega(u)du
  \ge \frac n2\int_0^{(b-a)/n}\omega(u)du.
  \end{gather*}
  The function $G_{\xi^*}$ is monotone on the segments $[a,\eta_1(\xi^*)], [\eta_i(\xi^*),\eta_{i+1}(\xi^*)]$, $i = 1,\ldots, n-1$, $[\eta_n(\xi^*),b]$ and $G_{\xi^*}(0) = G_{\xi^*}(b) = 0$. Hence
  $$
   2n\max\limits_i |G_{\xi^*}(\eta_i(\xi^*))|
   \ge
   2\sum\nolimits_{i=1}^n |G_{\xi^*}(\eta_i(\xi^*))|
   =\bigvee\nolimits_a^bG_{\xi^*}
   \geq
   \frac n2\int_0^{(b-a)/n}\omega(u)du,
  $$
  which implies~\eqref{splineNorm} for $f_{\bf t}=G_{\xi^*({\bf t})}$. If ${\bf t}$ is the uniform partition, all inequalities above become equalities, and hence~\eqref{splineNorm} also becomes equality.   
  The lemma is proved.\end{proof}

\subsection{Optimal recovery of the identity operator.}
Using the definition of the class $W^1H^\omega ([a,b],X)$, for an isotropic $L$-space $X$, $f\in W^1H^\omega ([a,b],X)$ and $t\in [a,b]$, applying Theorem~\ref{th::ostrowskiInequality} to  $\DerH f\in H^\omega ([a,b],X)$, one has
\begin{multline}\label{*}
h_X\left( f(t),\frac{b-t}{b-a}f(a)+\frac{t-a}{b-a}f(b)\right)
=
h_X\left( f(a)+\int_a^t\DerH f(u)du,\right.
\\ 
\left.\frac{b-t}{b-a}f(a)+\frac{t-a}{b-a}f(a)
+
\frac{t-a}{b-a}\int_a^b\DerH f(u)du\right)
\\ =
h_X\left(\int_a^t\DerH f(u)du,\right.
\left.\frac{t-a}{b-a}\int_a^b\DerH f(u)du\right)
\le 
\frac{(b-t)(t-a)}{(b-a)^2}\int_0^{b-a}\omega(u)du.
\end{multline}

Next we apply the obtained inequality to prove an estimate of the deviation of a function $f\in W^1H^\omega ([a,b],X)$ at a fixed point $t\in [a,b]$ from the interpolation polygonal function. Let a partition ${\bf t}$ as in~\eqref{partition} be given.
The interpolation polygonal function is 
\begin{equation}\label{interpolationPolyline}
 l_f({\bf t};t)=\frac{t_{k+1}-t}{t_{k+1}-t_k}f(t_k)+\frac{t-t_k}{t_{k+1}-t_k}f(t_{k+1}),\; t\in[t_k,t_{k+1}].  
\end{equation}
Applying~\eqref{*}, we obtain that for $t\in [t_k, t_{k+1}]$
\begin{equation}\label{polylineUpperEstimate}
h_X(f(t),l_f({\bf t};t))\le \frac{(t_{k+1}-t)(t-t_k)}{(t_{k+1}-t_k)^2}\int_0^{t_{k+1}-t_k}\omega(u)du.    
\end{equation}
Therefore for a uniform partition ${\bf t}^*$ of the segment $[a,b]$ the following generalization of a result by Malozemov~\cite{Malozemov66} holds: for each $f\in W^1H^\omega ([a,b],X)$
\begin{equation}\label{polylineDeviation}
\max\limits_{t\in [a,b]}h_X(f(t),l_f({\bf t}^*;t))\le \frac 1{4}\int_0^{(b-a)/n}\omega(u)du.
\end{equation}

\begin{theorem} If ${\bf t}$ is a partition of $[a,b]$, 
$
I_{\bf t}(f)=(f(t_0),f(t_1),\ldots , f(t_n))
$
is the information operator and ${\rm Id}$ is the identity operator, then for an isotropic $L$-space $X$
$$
\inf\limits_{\bf t}\mathcal{E}({\rm Id}, W^1H^\omega([a,b],X),I_{\bf t}, C([a,b],X))
=
\frac 14\int_0^{(b-a)/n}\omega(t)dt.
$$
The optimal information operator is $I_{{\bf t}^*}$ where ${\bf t}^*$ is the uniform partition, and the optimal method of recovery is $\Phi(I_{{\bf t}^*}(f)) =l_f({\bf t}^*)$, where $l_f({\bf t})$ is defined by~\eqref{interpolationPolyline}.
\end{theorem}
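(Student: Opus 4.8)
The plan is to establish the two inequalities $\mathcal{E}\le \tfrac14\int_0^{(b-a)/n}\omega$ and $\mathcal{E}\ge \tfrac14\int_0^{(b-a)/n}\omega$ separately, the upper bound for the specific uniform partition and method, and the lower bound for an arbitrary partition ${\bf t}$. For the upper bound I would simply invoke the estimate~\eqref{polylineDeviation}: choosing the uniform partition ${\bf t}^*$ and the method $\Phi(I_{{\bf t}^*}(f))=l_f({\bf t}^*;\cdot)$, the error of this concrete method is $\sup_{f}\max_{t\in[a,b]}h_X(f(t),l_f({\bf t}^*;t))$, which by~\eqref{polylineDeviation} is at most $\tfrac14\int_0^{(b-a)/n}\omega(u)du$. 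Since $\Phi(I_{{\bf t}^*}(f))$ is continuous (a polygonal line joining the values $f(t_i)\in X$), this is a legitimate recovery method into $C([a,b],X)$, so $\mathcal{E}({\rm Id},W^1H^\omega([a,b],X),I_{{\bf t}^*},C([a,b],X))\le\tfrac14\int_0^{(b-a)/n}\omega(u)du$, and a fortiori the infimum over ${\bf t}$ is at most this value.

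For the lower bound, fix an arbitrary partition ${\bf t}$. By Lemma~\ref{l::omegaSpline} there is a real-valued function $f_{\bf t}\in W^1H^\omega([a,b],\RR)$ with $f_{\bf t}(t_i)=0$ for all $i$ and $\max_{t\in[a,b]}|f_{\bf t}(t)|\ge\tfrac14\int_0^{(b-a)/n}\omega(u)du$. Now pick $x\in X^{\rm c}\cap X^{\rm inv}$ with $h_X(x,\theta)=1$ and form $\overline F=(f_{\bf t})_x$ and $\underline F=(f_{\bf t})_{x'}$ as in~\eqref{LValuedExtremalFunc}. By Lemma~\ref{l::LspaceValuedFunc} both lie in $H^\omega([a,b],X)$, and since $f_{\bf t}(a)=0$ one checks (using the representation in Definition~\ref{W1Homega}, with the integrand being the $L$-space valued function $(f_{\bf t}')_x$, which is in $H^\omega$ by concavity of $\omega$ and belongs to the right class by Lemma~\ref{l::derivativeOfSimpleFunc}) that $\overline F,\underline F\in W^1H^\omega([a,b],X)$. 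At the interpolation knots, $\overline F(t_i)=(f_{\bf t}(t_i))_x=\theta=(f_{\bf t}(t_i))_{x'}=\underline F(t_i)$, so $I_{\bf t}(\overline F)=I_{\bf t}(\underline F)$. Applying Lemma~\ref{l::errorOfRecoveryFromBelow},
\begin{equation*}
\mathcal{E}({\rm Id},W^1H^\omega([a,b],X),I_{\bf t},C([a,b],X))\ge\tfrac12\max_{t\in[a,b]}h_X(\overline F(t),\underline F(t)).
\end{equation*}
For each $t$, $h_X(\overline F(t),\underline F(t))=h_X((f_{\bf t})_+(t)x+(f_{\bf t})_-(t)x',\,(f_{\bf t})_+(t)x'+(f_{\bf t})_-(t)x)$; by adding the common convex element $(f_{\bf t})_+(t)x+(f_{\bf t})_-(t)x$ inside $h_X$ (isotropy) and Lemma~\ref{l::distBetweenInverseElems} this equals $2|f_{\bf t}(t)|\,h_X(x,\theta)=2|f_{\bf t}(t)|$. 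Hence the right-hand side is $\max_{t\in[a,b]}|f_{\bf t}(t)|\ge\tfrac14\int_0^{(b-a)/n}\omega(u)du$. Taking the infimum over ${\bf t}$ gives the matching lower bound, and combining with the upper bound proves the theorem; optimality of ${\bf t}^*$ and of $\Phi$ follows since equality is attained there.

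The main obstacle is the lower-bound construction: one must verify that the real scalar extremal $f_{\bf t}$ from Lemma~\ref{l::omegaSpline} genuinely lifts to $L$-space valued functions in $W^1H^\omega$, i.e. that $(f_{\bf t})_x$ has a Hukuhara type derivative of the required regularity and that the $h_X$-distance between the two lifts is exactly $2|f_{\bf t}|$ pointwise — this is where isotropy, the convexity/invertibility of $x$, and Lemmas~\ref{l::LspaceValuedFunc}, \ref{l::distBetweenInverseElems}, \ref{l::derivativeOfSimpleFunc} all get used. The arguments connecting the real case to the $L$-space case are entirely parallel to those already carried out in the proofs of the preceding theorems in Sections~\ref{s::KSLemma} and~\ref{s::HOmegaRecovery}, so I expect no essentially new difficulty there.
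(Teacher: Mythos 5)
Your proposal is correct and follows essentially the same route as the paper: the upper bound via~\eqref{polylineDeviation} for the uniform partition with the interpolation polygonal method, and the lower bound by lifting the real extremal $f_{\bf t}$ of Lemma~\ref{l::omegaSpline} to $(f_{\bf t})_x$ and $(f_{\bf t})_{x'}$ and applying Lemma~\ref{l::errorOfRecoveryFromBelow} together with isotropy and Lemma~\ref{l::distBetweenInverseElems} to get $h_X\bigl((f_{\bf t})_x(t),(f_{\bf t})_{x'}(t)\bigr)=2|f_{\bf t}(t)|$. You even supply details the paper leaves implicit (membership of the lifts in $W^1H^\omega([a,b],X)$ and the coincidence of their values at the knots), so no gap.
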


\begin{proof} 
For arbitrary partition ${\bf t}$ let $f_{\bf t}$ be the function from Lemma~\ref{l::omegaSpline}, and $x\in X^c\cap X^{\rm inv}$, $h_X(x,\theta) = 1$.
Using Lemmas~\ref{l::errorOfRecoveryFromBelow} and~\ref{l::distBetweenInverseElems}, and isotropness of $X$, we obtain
$$
\mathcal{E}({\rm Id}, W^1H^\omega([a,b],X),I_{\bf t}, (C[a,b],X))
 \ge
{ \frac 12\max\limits_{t\in [a,b]}h_X\left((f_{\bf t})_x(t), (f_{\bf t})_{x'}(t)\right)}
 $$
 $$
 = \frac 12 \max\limits_{t\in [a,b]}h_X(2(f_{\bf t})_+(t)\cdot x,2(f_{\bf t})_-(t)\cdot x) =
\max\limits_{t\in [a,b]}|f_{\bf t}\left(t\right)|
\geq 
\frac 14\int_0^{(b-a)/n}\omega(t)dt.
$$

It follows from~\eqref{polylineDeviation}, that in the case of the uniform partition we have equalities in the above inequalities.
The theorem is proved.\end{proof}
\subsection{Recovery of the derivative}
Consider the problem about the deviation of the Hukuhara type derivative of a function $f\in W^1H^\omega ([a,b], X)$ from the derivative of its interpolation polygonal function. 

The Hukuharu type derivative of the interpolation at the points of the partition ${\bf t}$ polygonal function $l_f({\bf t})$ for $t\in (t_k,t_{k+1})$ is equal to 
$$
\DerH l_f({\bf t};t)=\frac{f(t_{k+1})\DiffH f(t_k)}{t_{k+1}-t_k}=\frac1{t_{k+1}-t_k}\int_{t_k}^{t_{k+1}}\DerH f(u)du, \;k = 0,\ldots, n-1.
$$
We define it at the points $t_k$, setting
$$
 \DerH l_f({\bf t};t_k)=
\begin{cases}
   (f(t_{k+1}\DiffH f(t_k))/(t_{k+1}-t_k), & \text{  if } k=0,1,\ldots,n-1,\\
      (f(t_n)\DiffH f(t_{n-1}))/(t_{n}-t_{n-1}), & \text{ if } k=n.
\end{cases}
$$
For $t\in [t_k, t_{k+1}]$ we obtain, using Corollary~\ref{c::valueIntegralDeviation} 
\begin{equation}\label{derivativeDeviation}
h_X\left(\DerH f(t), \DerH l_f({\bf t};t)\right)\le \frac 1{t_{k+1}-t_k}\int_{t_k}^{t_{k+1}}\omega (|s-t|)ds.    
\end{equation}
The following theorem generalizes the results from~\cite{Malozemov67}.
\begin{theorem}
Let $\omega$ be an arbitrary modulus of continuity and ${\bf t^*}=(t^*_0,\ldots ,t^*_n)$ be the uniform partition of the segment $[a,b]$. Then 
$$
\mathcal{E}(\DerH, W^1H^\omega([a,b],X),I_{{\rm\bf t}^*}, B([a,b],X))=\frac n{b-a}\int_{0}^{(b-a)/n}\omega (u)du.
$$
The optimal method of recovery is
$$
\Phi (f(t_0^*),f(t_1^*),\ldots ,f(t_n^*))=\DerH l_f({\bf t}^*).
$$
\end{theorem}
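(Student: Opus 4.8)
The plan is to prove matching upper and lower bounds. For the \emph{upper bound}, take the method $\Phi^*$ defined on the range of $I_{{\bf t}^*}$ by $\Phi^*(f(t_0^*),\dots,f(t_n^*))=\DerH l_f({\bf t}^*)$, which is meaningful because for $f\in W^1H^\omega([a,b],X)$ the Hukuhara differences $f(t_{k+1}^*)\DiffH f(t_k^*)=\int_{t_k^*}^{t_{k+1}^*}\DerH f(u)\,du$ exist. Since ${\bf t}^*$ is uniform, $t_{k+1}^*-t_k^*=\ell:=(b-a)/n$, so~\eqref{derivativeDeviation} gives, for $t\in[t_k^*,t_{k+1}^*]$, $h_X(\DerH f(t),\DerH l_f({\bf t}^*;t))\le\tfrac1\ell\int_{t_k^*}^{t_{k+1}^*}\omega(|s-t|)\,ds$. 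Writing $t=t_k^*+\theta\ell$ with $\theta\in[0,1]$, this integral equals $\int_0^{\theta\ell}\omega+\int_0^{(1-\theta)\ell}\omega$, and since $\omega$ is non-decreasing one has $\int_0^{\theta\ell}\omega\le\int_{(1-\theta)\ell}^{\ell}\omega$, so the sum is at most $\int_0^\ell\omega$. Taking the supremum over $t\in[a,b]$ and then over $f$ yields $\mathcal E(\DerH,W^1H^\omega([a,b],X),I_{{\bf t}^*},B([a,b],X))\le\tfrac n{b-a}\int_0^{(b-a)/n}\omega(u)\,du$.

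For the \emph{lower bound} I would first construct a scalar extremal function. Put $C=\tfrac1\ell\int_0^\ell\omega(u)\,du$ and let $\phi_0\colon\RR\to\RR$ be the $2\ell$-periodic even function with $\phi_0(u)=C-\omega(u)$ for $u\in[0,\ell]$ (equivalently $\phi_0(u)=C-\omega(|u|)$ on $[-\ell,\ell]$). From $|\omega(A)-\omega(B)|\le\omega(|A-B|)$ for $A,B\ge0$ (a consequence of monotonicity and semi-additivity of $\omega$), from $\bigl||u|-|v|\bigr|\le|u-v|$, and from an analogous estimate at the junction points $\ell+2\ell\ZZ$, one checks $\phi_0\in H^\omega(\RR,\RR)$. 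Moreover $\int_0^\ell\phi_0=C\ell-\int_0^\ell\omega=0$, whence by periodicity and evenness $\int_c^{c+\ell}\phi_0=0$ for every $c\in\ell\ZZ$, while $\max_{\RR}|\phi_0|\ge\phi_0(0)=C$. Set $\phi(u)=\phi_0(u-a)$ and $f_{{\bf t}^*}(t)=\int_a^t\phi(s)\,ds$; then $f_{{\bf t}^*}\in W^1H^\omega([a,b],\RR)$, $f_{{\bf t}^*}(t_k^*)=0$ for all $k$, and $\max_{[a,b]}|\phi|\ge C$.

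Next I would pass to the $L$-space, as in the proof of the optimal recovery theorem for the identity operator. Fix $x\in X^{\rm c}\cap X^{\rm inv}$ with $h_X(x,\theta)=1$ and, in the notation of~\eqref{simpleFunc}, set $\overline F=(f_{{\bf t}^*})_x$, $\underline F=(f_{{\bf t}^*})_{x'}$. Since $x,x'\in X^{\rm c}$, the functions $\phi_x,\phi_{x'}$ are convex-valued; by Lemma~\ref{l::LspaceValuedFunc} they belong to $H^\omega([a,b],X)$ and $\int_a^t\phi_x(s)\,ds=(f_{{\bf t}^*}(t))_x$, $\int_a^t\phi_{x'}(s)\,ds=(f_{{\bf t}^*}(t))_{x'}$. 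Hence $\overline F,\underline F\in W^1H^\omega([a,b],X)$, and recalling that a function $x_0+\int_a^\cdot\psi$ with $\psi\in H^\omega$ has Hukuhara derivative $P(\psi)$ and that $P$ fixes convex elements, $\DerH\overline F=\phi_x$ and $\DerH\underline F=\phi_{x'}$. Since $f_{{\bf t}^*}(t_k^*)=0$, we get $\overline F(t_k^*)=\underline F(t_k^*)=\theta$, so $I_{{\bf t}^*}(\overline F)=I_{{\bf t}^*}(\underline F)$. Using isotropy of $X$ we reduce $h_X(\phi_x(t),\phi_{x'}(t))$ to $h_X(2\phi_+(t)\,x,\,2\phi_-(t)\,x)$, and Lemma~\ref{l::distBetweenConvexElemMultipliers} then gives $h_X(\phi_x(t),\phi_{x'}(t))=2|\phi(t)|$; hence $h_{B([a,b],X)}(\DerH\overline F,\DerH\underline F)=2\max_{[a,b]}|\phi|\ge2C$. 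By Lemma~\ref{l::errorOfRecoveryFromBelow}, $\mathcal E(\DerH,W^1H^\omega([a,b],X),I_{{\bf t}^*},B([a,b],X))\ge C=\tfrac n{b-a}\int_0^{(b-a)/n}\omega(u)\,du$, and together with the upper bound this proves the theorem.

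The only delicate step is the construction of $\phi$: it must have zero mean over each of the $n$ equal subintervals, satisfy the modulus-of-continuity bound \emph{globally} on $[a,b]$ (and not merely inside each subinterval), and attain absolute value $C$. The periodic even extension of $C-\omega(|\cdot|)$ achieves all three at once; the single point that needs care is the verification of the $\omega$-Lipschitz property across the junction points, for which the inequalities $|\omega(A)-\omega(B)|\le\omega(|A-B|)$ and $\bigl||u|-|v|\bigr|\le|u-v|$ are used. The remaining $L$-space bookkeeping for $\overline F,\underline F$ and for $h_X(\phi_x,\phi_{x'})$ is routine given the lemmas of Section~\ref{s::LSpace}.
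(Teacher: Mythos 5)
Your proposal is correct and follows essentially the same route as the paper: the upper bound comes from~\eqref{derivativeDeviation} plus the monotonicity of $\omega$ (a step the paper leaves implicit and you spell out), and your lower-bound function $\phi$ is exactly the negative of the paper's $g=g_0-\frac1{b-a}\int_a^b g_0$ with $g_0(t)=\min_{k\colon 2k\le n}\omega(|t-t_{2k}^*|)$, just described as a periodic even extension rather than a minimum of translates of $\omega(|\cdot|)$. The subsequent lift to $X$ via $(f_{{\bf t}^*})_x$, $(f_{{\bf t}^*})_{x'}$ and Lemma~\ref{l::errorOfRecoveryFromBelow} is the paper's argument verbatim.
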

\begin{proof}
From~\eqref{derivativeDeviation} it follows that 
$$
\sup\limits_{f\in W^1H^\omega ([a,b], X)}\sup\limits_{t\in [a,b]}h_X\left(\DerH f(t), \DerH l_f({\bf t}^*,t)\right)
\leq 
\frac n{b-a}\int_{0}^{(b-a)/n}\omega (u)du.
$$
An extremal function is built as follows. Set
$
g_0(t)=\min\limits_{k\colon 2k\le n}\omega(|t-t_{2k}^*|)
$
and
$$
g(t)=g_0(t)-\frac{1}{b-a}\int_a^bg_0(u)du.
$$
The function $f_{{\bf t}^*}(t):=\int_a^tg(u)du$ belongs to $W^1H^\omega([a,b],\RR)$. Moreover, since ${\bf t}^*$ is the uniform partition, $f_{{\bf t}^*}(t_k) = 0$, $k = 0,\ldots, n$, and hence $l_f({\bf t}^*)\equiv 0$. Finally, applying Lemma~\ref{l::errorOfRecoveryFromBelow} to functions
$(f_{{\bf t}^*})_x$ and $(f_{{\bf t}^*})_{x'}$ ($x\in X^c\cap X^{\rm inv}$, $h_X(x,\theta)=1$)
we obtain 
\begin{gather*}
\mathcal{E}(\DerH, W^1H^\omega([a,b],X),I_{{\rm\bf t}^{*}}, B([a,b],X))
\geq  
\frac 12h_X\left(\DerH(f_{{\bf t}^*})_x(a), \DerH(f_{{\bf t}^*})_{x'}(a)\right)
\\ =
 |f'_{{\bf t}^*}(a)|
 =
 \frac{1}{b-a}\int_a^bg_0(u)du
 =
 \frac n{b-a}\int_{0}^{(b-a)/n}\omega (u)du.
 \end{gather*}
The theorem is proved.\end{proof}

\section{On Inequalities of Landau type and Stechkin's Problem for Hukuharu Type Divided Differences and Derivatives}\label{s::StechkinPr}
\subsection{Deviation of Hukuhara type divided differences and derivatives}
Let $t\in [a,b]$ and non-negative numbers $\gamma_1,\gamma_2,h_1,h_2$ such that 
\begin{equation}\label{h,gamma}
\gamma_1+\gamma_2>0, h_1+h_2>0, \text{ and } 
[t-\gamma_1,t+\gamma_2]\subset [t-h_1,t+h_2]\subset [a,b]
\end{equation}
be given. For a function $f\in W^1H^\omega([a,b],X)$ set
$$
\Delta^H_{\gamma_1,\gamma_2}f(t)=\frac{f(t+\gamma_2)\DiffH f(t-\gamma_1)}{\gamma_1+\gamma_2}.
$$
Applying Theorem~\ref{th::ostrowskiInequality} to the segments $[t-\gamma_1,t+\gamma_2]$ and $[t-h_1,t+h_2]$, and writing 
$I(\alpha)$ instead of $I(0,\alpha)$, we obtain
$$
h_X(\Delta^H_{\gamma_1,\gamma_2}f(t),\Delta^H_{h_1,h_2}f(t))
=h_X\left(\frac 1{\gamma_1+\gamma_2}\int_{t-\gamma_1}^{t+\gamma_2}\DerH f(u)du,\frac 1{h_1+h_2} \int_{t-h_1}^{t+h_2}\DerH f(u)du\right)
$$
$$
\le \frac{(h_1-\gamma_1)+(h_2-\gamma_2)}{(h_1+h_2)^2}\left\{ I\left(\frac{(h_1+h_2)(h_1-\gamma_1)}{(h_1-\gamma_1)+(h_2-\gamma_2)}\right)\right.
+\left.I\left(\frac{(h_1+h_2)(h_2-\gamma_2)}{(h_1-\gamma_1)+(h_2-\gamma_2)}\right)\right\}
$$
$$
=:K(\gamma_1,\gamma_2;h_1,h_2).
$$
If $\omega$ is a concave modulus of continuity, then the estimate 
\begin{equation}\label{a}
h_X(\Delta^H_{\gamma_1,\gamma_2}f(t),\Delta^H_{h_1,h_2}f(t))\le K(\gamma_1,\gamma_2;h_1,h_2)
\end{equation}
is sharp. Extremal functions can be built as follows. Start with the extremal function $g$ from Theorem~\ref{th::ostrowskiInequality} for the segments $[t-\gamma_1,t+\gamma_2]$ and $[t-h_1,t+h_2]$. Continue it setting \begin{equation}\label{continuation}
    g(u)=g(t-h_1) \text{ for } u\le t-h_1 \text{ and }  g(u)=g(t+h_2) \text{ for } t\ge t+h_2. 
\end{equation}
Inequality~\eqref{a} becomes equality on the functions
$
f(u)=\int_a^ug(s)ds+y, \; u\in [a,b]$, $y\in X^{\rm c}.
$
Shrinking the segment $[t-\gamma_1, t+\gamma_2]$ into the point $t$, we obtain
\begin{equation}\label{a'}
h_X(\DerH f(t),\Delta_{h_1,h_2}^Hf(t))\le\frac {I(h_1)+I(h_2)}{h_1+h_2}.    \end{equation}
This inequality is sharp for arbitrary modulus of continuity $\omega$. Extremal functions can be built analogously to the extremal ones for~\eqref{a}, except we need to start from the extremal function from Corollary~\ref{c::valueIntegralDeviation}.

\subsection{Landau type inequalities}
Below for brevity we write $\overline{W}^1H^\omega([a,b],X):=\bigcup_{k>0}k\cdot {W}^1H^\omega([a,b],X)$
$$
\|x\|_X=h_X(x,\theta), \;
\| f\|_{\omega,X}=\sup\limits_{\substack{t',t''\in [a,b] \\ t'\neq t''}}\frac{h_X(f(t'),f(t''))}{\omega(|t'-t''|)},\;\;\;
\| f\|_{C([a,b],X)}=\sup\limits_{t\in [a,b]}\| f(t)\|_X.
$$
\begin{theorem}\label{1stLandauIn}
Let $\omega$ be a modulus of continuity, and $X$ be an isotropic $L$-space. For all  $t\in [a,b]$, non-negative $\gamma_1,\gamma_2, h_1,h_2$ that satisfy~\eqref{h,gamma}, and $f\in \overline{W}^1H^\omega([a,b],X)$,
\begin{equation}\label{b}
\| \Delta_{\gamma_1,\gamma_2}^Hf(t)\|_X\le K(\gamma_1,\gamma_2; h_1,h_2)\| \DerH f\|_{\omega,X}+\| \Delta_{h_1,h_2}^Hf(t)\|_X,   
\end{equation}
\begin{equation}\label{c}
    \|\DerH f(t)\|_X\le \frac {I(h_1)+I(h_2)}{h_1+h_2}\| \DerH f\|_{\omega,X}+\| \Delta_{h_1,h_2}f(t)\|_X.
\end{equation}
Inequality~\eqref{b} is sharp for concave $\omega$. Inequality~\eqref{c} is sharp for arbitrary $\omega$.
\end{theorem}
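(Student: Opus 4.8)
The plan is to deduce \eqref{b} and \eqref{c} from the sharp estimates \eqref{a} and \eqref{a'}, which are already available for functions in $W^1H^\omega([a,b],X)$, by a homogeneity argument followed by the triangle inequality, and then to exhibit extremal functions. Fix $f\in\overline W^1H^\omega([a,b],X)$ and set $L:=\|\DerH f\|_{\omega,X}$. Writing $f=k\cdot g$ with $k>0$ and $g=x_0+\int_a^{\cdot}\phi\in W^1H^\omega([a,b],X)$, $\phi\in H^\omega([a,b],X)$, $x_0\in X^{\rm c}$, we have $\DerH f=k\,P(\phi)$, hence $L=k\,\|P(\phi)\|_{\omega,X}\le k<\infty$ and, by property~3 of the integral, $f(u)=f(a)+\int_a^u\DerH f(s)\,ds$. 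If $L=0$, then $\DerH f$ is constant, so $\Delta^H_{\gamma_1,\gamma_2}f(t)=\Delta^H_{h_1,h_2}f(t)=\DerH f(t)$ and both inequalities are trivial equalities. If $L>0$, put $\widetilde f:=L^{-1}f$; then $\widetilde f(a)=L^{-1}f(a)\in X^{\rm c}$ and $\widetilde\phi:=L^{-1}\DerH f$ satisfies $\|\widetilde\phi\|_{\omega,X}=1$, so $\widetilde f=\widetilde f(a)+\int_a^{\cdot}\widetilde\phi\in W^1H^\omega([a,b],X)$ with $\DerH\widetilde f=\widetilde\phi=L^{-1}\DerH f$.

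Since $X$ is isotropic, the Hukuhara difference is unique and positively homogeneous and $h_X(\lambda u,\lambda v)=\lambda h_X(u,v)$ for $\lambda>0$; therefore $\Delta^H_{\gamma_1,\gamma_2}\widetilde f(t)=L^{-1}\Delta^H_{\gamma_1,\gamma_2}f(t)$, $\Delta^H_{h_1,h_2}\widetilde f(t)=L^{-1}\Delta^H_{h_1,h_2}f(t)$ and $\DerH\widetilde f(t)=L^{-1}\DerH f(t)$. Applying \eqref{a} (resp.\ \eqref{a'}) to $\widetilde f$ and multiplying by $L$ yields $h_X(\Delta^H_{\gamma_1,\gamma_2}f(t),\Delta^H_{h_1,h_2}f(t))\le K(\gamma_1,\gamma_2;h_1,h_2)L$ (resp.\ $h_X(\DerH f(t),\Delta^H_{h_1,h_2}f(t))\le\frac{I(h_1)+I(h_2)}{h_1+h_2}L$); then \eqref{b} and \eqref{c} follow from $\|A\|_X\le h_X(A,B)+\|B\|_X$ with $A=\Delta^H_{\gamma_1,\gamma_2}f(t)$ (resp.\ $A=\DerH f(t)$) and $B=\Delta^H_{h_1,h_2}f(t)$.

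For sharpness the idea is to make all these triangle inequalities into equalities, by choosing extremal functions of the form $f(u)=y+\int_a^u g_0(s)\,ds\cdot x$, where $y\in X^{\rm c}$, $x\in X^{\rm c}\cap X^{\rm inv}$ with $h_X(x,\theta)=1$, and $g_0\colon[a,b]\to[0,\infty)$ is a nonnegative real-valued function with $\|g_0\|_{\omega,\RR}=1$. By Lemmas~\ref{l::distBetweenConvexElemMultipliers} and~\ref{l::LspaceValuedFunc}, for such $f$ one has $\DerH f=(g_0)_x=g_0\cdot x$, $\|\DerH f\|_{\omega,X}=1$, and $\Delta^H_{\gamma_1,\gamma_2}f(t)=\big(\tfrac1{\gamma_1+\gamma_2}\int_{t-\gamma_1}^{t+\gamma_2}g_0\big)x$, $\Delta^H_{h_1,h_2}f(t)=\big(\tfrac1{h_1+h_2}\int_{t-h_1}^{t+h_2}g_0\big)x$, $\DerH f(t)=g_0(t)x$, all of them nonnegative multiples of the single convex element $x$; on the ray $\{sx:s\ge0\}$ the metric $h_X$ is additive by Lemma~\ref{l::distBetweenConvexElemMultipliers}. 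Consequently the triangle inequality used above is an equality precisely when the mean of $g_0$ over the inner interval is at least its mean over the outer interval.

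To finish \eqref{c} for an arbitrary modulus of continuity $\omega$, take $g_0$ equal to $\omega(h_1+h_2)-\omega(|u-t|)$ on $[t-h_1,t+h_2]$ and extended by constants to $[a,b]$: then $g_0\ge0$, $\|g_0\|_{\omega,\RR}=1$, $\DerH f(t)=\omega(h_1+h_2)x$, $\Delta^H_{h_1,h_2}f(t)=\big(\omega(h_1+h_2)-\tfrac{I(h_1)+I(h_2)}{h_1+h_2}\big)x$, and substituting into the right-hand side of \eqref{c} returns exactly $\omega(h_1+h_2)=\|\DerH f(t)\|_X$. To finish \eqref{b} for concave $\omega$, take for $g_0$ the real-valued extremal of \eqref{a} supplied by Theorem~\ref{th::ostrowskiInequality} for the pair $[t-\gamma_1,t+\gamma_2]\subset[t-h_1,t+h_2]$, continued by constants as in \eqref{continuation}; after possibly replacing it by its negative (so that its inner mean exceeds its outer mean, the difference being $K(\gamma_1,\gamma_2;h_1,h_2)$) and adding a constant so that $g_0\ge0$ on all of $[a,b]$ — neither operation changes $\|g_0\|_{\omega,\RR}$ nor, by isotropy, the value $h_X(\Delta^H_{\gamma_1,\gamma_2}f(t),\Delta^H_{h_1,h_2}f(t))=K(\gamma_1,\gamma_2;h_1,h_2)$ — the mean condition holds and \eqref{b} becomes an equality. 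The step requiring the most care is precisely this normalization of the real Ostrowski extremal: one must verify that it can simultaneously be made nonnegative on all of $[a,b]$, be kept with $\|\cdot\|_{\omega,\RR}=1$, and still saturate \eqref{a}, so that all three Hukuhara quantities genuinely lie on the ray $\RR_{\ge 0}x$ in the required order.
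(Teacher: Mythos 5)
Your proposal is correct and follows essentially the same route as the paper: inequalities~\eqref{b} and~\eqref{c} are deduced from~\eqref{a} and~\eqref{a'} via the triangle inequality (you merely make explicit the homogeneity reduction from $\overline{W}^1H^\omega$ to $W^1H^\omega$, which the paper leaves implicit), and sharpness is obtained from exactly the same extremal functions $\bigl(\int_a^{\cdot}g_0\bigr)_x$ with $g_0\ge 0$ a real Ostrowski extremal whose mean over the inner interval dominates its mean over the outer one. The normalization step you flag as delicate is handled in the paper by the observation that the nonnegative extremal of Theorem~\ref{th::ostrowskiInequality} can be taken unimodal with its peak inside $(t-\gamma_1,t+\gamma_2)$, which is the same point phrased slightly differently.
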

\begin{proof} Inequalities~\eqref{b} and~\eqref{c} follow from~\eqref{a} and~\eqref{a'} respectively. An extremal function for~\eqref{b} can be built as follows. Let $g$ be a non-negative extremal function in Theorem~\ref{th::ostrowskiInequality} for the case $X=\RR$ and the segments  $[t-h_1,t+h_2]$, $[t-\gamma_1,t+\gamma_2]$. Continue it to the whole segment $[a,b]$ by~\eqref{continuation}.
Note, that due to construction of $g$, we can assume that there exists $\gamma\in (t-\gamma_1,t+\gamma_2)$ such that $g$ increases on $(t-h_1,\gamma)$ and decreases on $(\gamma,t+h_2)$. Hence 
$$
\frac{1}{\gamma_1+\gamma_2}\int_{t-\gamma_1}^{t+\gamma_2}g(u)du \geq \frac{1}{h_1+h_2}\int_{t-h_1}^{t+h_2}g(u)du,
$$
and the function $f(u) = \int_{a}^ug(s)ds$  turns inequality~\eqref{b} into equality in the case $X = \RR$. Indeed, 
$$
 \Delta_{\gamma_1,\gamma_2}^Hf(t)=\left(\frac{1}{\gamma_1+\gamma_2}\int_{t-\gamma_1}^{t+\gamma_2}g(u)du - \frac{1}{h_1+h_2}\int_{t-h_1}^{t+h_2}g(u)du\right)+\frac{1}{h_1+h_2}\int_{t-h_1}^{t+h_2}g(u)du
$$
$$
=K(\gamma_1,\gamma_2,h_1,h_2)+\Delta_{h_1,h_2}^Hf(t).
$$
In general case, the function $f_x$ with $x\in X^{\rm c}$, $\| x\|_X = 1$ is extremal for inequality~\eqref{b}. 

An extremal function for~\eqref{c} can be built analogously to the one in~\eqref{b}, but we need to start from a non-negative extremal for Corollary~\ref{c::valueIntegralDeviation} for the point $t$ and the segment $[t-h_1,t+h_2]$. 
\end{proof}

\begin{theorem}\label{th::landauIneq}
Under the conditions of Theorem~\ref{1stLandauIn}
for any $f\in \overline{W}^1H^\omega([a,b],X)$,
\begin{equation}\label{d}
\| \Delta_{\gamma_1,\gamma_2}f(t)\|_X\le K(\gamma_1,\gamma_2; h_1,h_2)\| \DerH f\|_{\omega,X}+\frac 2{h_1+h_2}\| f\|_{C([a,b],X)},    \end{equation}
\begin{equation}\label{e}
\| \DerH f(t)\|_X\le \frac {I(h_1)+I(h_2)}{h_1+h_2}\|\DerH f\|_{\omega,X}+\frac 2{h_1+h_2}\| f\|_{C([a,b],X)}.
\end{equation}
If for given $t\in [a,b]$ and $h>\gamma>0$ 
\begin{equation}\label{restrictionsOnH}
    \gamma_1=\min\{ \gamma, t-a\},\; \gamma_2=\min\{ \gamma, b-t\},
\; h_1=\min\{ h, t-a\},\; h_2=\min\{ h, b-t\}
\end{equation}
and $\omega$ is concave, then inequality~\eqref{d} is sharp.  If for $t\in [a,b]$ and $h>0$
\begin{equation}\label{restrictionsOnh}
     h_1=\min\{ h, t-a\},\; h_2=\min\{ h, b-t\}, 
     \end{equation}
and $\omega$ is an arbitrary modulus of continuity, then inequality~\eqref{e} is sharp.
\end{theorem}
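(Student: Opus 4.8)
The two inequalities \eqref{d} and \eqref{e} follow almost immediately from Theorem~\ref{1stLandauIn} together with a uniform bound on the divided difference $\Delta_{h_1,h_2}^Hf(t)$ in terms of $\|f\|_{C([a,b],X)}$. The plan is as follows. First, for arbitrary $f\in\overline{W}^1H^\omega([a,b],X)$ and admissible $t,h_1,h_2$, write $\Delta_{h_1,h_2}^Hf(t)=(\gamma_1+\gamma_2)^{-1}$-free expression $\frac{f(t+h_2)\DiffH f(t-h_1)}{h_1+h_2}$; since $z:=f(t+h_2)\DiffH f(t-h_1)$ satisfies $f(t+h_2)=f(t-h_1)+z$, the semi-isotropy axiom~\eqref{ax::LSemiIsotropic} gives $\|z\|_X=h_X(f(t-h_1)+z,f(t-h_1))\le h_X(f(t+h_2),\theta)+h_X(f(t-h_1),\theta)\le 2\|f\|_{C([a,b],X)}$, whence
\[
\|\Delta_{h_1,h_2}^Hf(t)\|_X\le\frac{2}{h_1+h_2}\|f\|_{C([a,b],X)}.
\]
Substituting this into \eqref{b} and \eqref{c} of Theorem~\ref{1stLandauIn} yields \eqref{d} and \eqref{e} respectively. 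This is the routine half.

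The main work is the sharpness. For \eqref{d}, under the choice~\eqref{restrictionsOnH}, I would revisit the extremal function $g$ constructed in the proof of Theorem~\ref{1stLandauIn} for inequality~\eqref{b} — the nonnegative extremal from Theorem~\ref{th::ostrowskiInequality} for the pair of segments $[t-h_1,t+h_2]\supset[t-\gamma_1,t+\gamma_2]$, extended to $[a,b]$ by~\eqref{continuation}. The key point is that for this extremal both ends of $[t-h_1,t+h_2]$ are points where $g$ attains its boundary value and $g\ge 0$; one needs $g$ arranged (adding a suitable constant if necessary, or exploiting the freedom $f\mapsto f_x+y$) so that simultaneously $\|\DerH f\|_{\omega,X}=1$, the remainder term in~\eqref{b} is exactly $K(\gamma_1,\gamma_2;h_1,h_2)$, and $\|f\|_{C([a,b],X)}=\tfrac{h_1+h_2}{2}\|\Delta_{h_1,h_2}^Hf(t)\|_X$, i.e. the bound $\|z\|_X\le 2\|f\|_C$ above is itself an equality. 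Concretely, set $f(u)=\bigl(\int_a^u g(s)\,ds + c\bigr)_x$ for $x\in X^{\rm c}$, $\|x\|_X=1$, and choose the real constant $c$ so that $\int_a^{t-h_1}g + c = -\bigl(\int_a^{t+h_2}g + c\bigr)$; then $f(t-h_1)$ and $f(t+h_2)$ are "antipodal" multiples $\mp\lambda x$ of $x$, Lemma~\ref{l::distBetweenInverseElems}/Lemma~\ref{l::inversElementNorm} give $\|f\|_C=\lambda$ and $h_X(f(t+h_2),f(t-h_1))=2\lambda$, so $\|\Delta_{h_1,h_2}^Hf(t)\|_X=\tfrac{2\lambda}{h_1+h_2}$ and the estimate in the first paragraph is tight. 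Since $g$ is, by construction, monotone increasing then decreasing with the single peak inside $[t-\gamma_1,t+\gamma_2]$, the Hukuhara difference $f(t+\gamma_2)\DiffH f(t-\gamma_1)$ equals the appropriate nonnegative multiple of $x$ and the computation from the proof of~\eqref{b} shows $\Delta_{\gamma_1,\gamma_2}^Hf(t)=\bigl(K(\gamma_1,\gamma_2;h_1,h_2)+\|\Delta_{h_1,h_2}^Hf(t)\|_X\bigr)x$, turning~\eqref{d} into equality.

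For \eqref{e}, the argument is the same, except one starts from the nonnegative extremal function of Corollary~\ref{c::valueIntegralDeviation} for the point $t$ and the segment $[t-h_1,t+h_2]$ — namely $u\mapsto \omega(|u-t|)$ suitably restricted — extended by constants via~\eqref{continuation}, and then forms $f(u)=\bigl(\int_a^u g(s)\,ds+c\bigr)_x$ with $c$ chosen as above so that $\|f\|_C$ equals exactly half of $(h_1+h_2)\|\Delta_{h_1,h_2}^Hf(t)\|_X$; shrinking $[t-\gamma_1,t+\gamma_2]$ to $\{t\}$ in the computation gives $\DerH f(t)=\bigl(\tfrac{I(h_1)+I(h_2)}{h_1+h_2}+\|\Delta_{h_1,h_2}f(t)\|_X\bigr)x$. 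Here no concavity of $\omega$ is needed because Corollary~\ref{c::valueIntegralDeviation} holds for arbitrary $\omega$.

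The main obstacle is purely bookkeeping rather than conceptual: verifying that the single real shift $c$ can be chosen to make the two a priori unrelated equalities — "$\|f\|_C=\tfrac{h_1+h_2}{2}\|\Delta^H_{h_1,h_2}f(t)\|_X$" and "$f(t\pm)$ are antipodal multiples of $x$" — hold simultaneously, and that the piecewise-constant continuation~\eqref{continuation} does not create a larger oscillation elsewhere that would spoil $\|f\|_C$. This hinges on the monotonicity structure of $g$ (increasing on $(t-h_1,\gamma)$, decreasing on $(\gamma,t+h_2)$, then constant outside), which forces $\int_a^u g + c$ to be monotone up to $\gamma$ and monotone down afterwards, so its extreme values over $[a,b]$ are attained exactly at $t-h_1$ (or $a$) and at $t+h_2$ (or $b$) — precisely where~\eqref{restrictionsOnH}, resp.~\eqref{restrictionsOnh}, guarantee the endpoints sit. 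I would record this monotonicity observation explicitly and then the two equalities follow from Lemmas~\ref{l::distBetweenConvexElemMultipliers}, \ref{l::distBetweenInverseElems} and~\ref{l::inversElementNorm} exactly as in the proof of Theorem~\ref{1stLandauIn}.
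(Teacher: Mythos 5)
The routine half of your argument --- deducing \eqref{d} and \eqref{e} from \eqref{b} and \eqref{c} via $\|\Delta^H_{h_1,h_2}f(t)\|_X\le\frac{2}{h_1+h_2}\|f\|_{C([a,b],X)}$ --- is exactly the paper's, and your normalization of the antiderivative (choosing $c$ so that $F(t-h_1)=-F(t+h_2)$, where $F(u)=\int_a^u g+c$) coincides with the paper's choice of base point $\xi$ with $\int_{t-h_1}^{\xi}g=\int_{\xi}^{t+h_2}g$. The sharpness argument, however, has a genuine gap in how you continue $g$ outside $[t-h_1,t+h_2]$. You use the constant continuation \eqref{continuation} and justify that it does not inflate $\|f\|_{C([a,b],X)}$ by asserting that $F$ is ``monotone up to $\gamma$ and monotone down afterwards.'' But your $g$ is nonnegative, so $F$ is nondecreasing on all of $[a,b]$: its extreme values sit at $a$ and $b$, and under the constant continuation $F(a)<F(t-h_1)$ and $F(b)>F(t+h_2)$ whenever $g$ is positive at an endpoint of $[t-h_1,t+h_2]$ that is interior to $[a,b]$. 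In that case $\|f\|_{C([a,b],X)}>\frac{h_1+h_2}{2}\|\Delta^H_{h_1,h_2}f(t)\|_X$, the right-hand side of \eqref{d} strictly exceeds the left-hand side, and your $f$ is not extremal. The paper's proof instead fixes the additive constant of $g$ itself so that $g(t+h_2)=0$ and continues $g$ by \emph{zero} outside $[t-h_1,t+h_2]$; under \eqref{restrictionsOnH} with $t\le(a+b)/2$ the left endpoint $t-h_1$ is either $a$ (nothing to continue) or, in the symmetric case $h_1=h_2=h$, a point where $g$ vanishes too, so the zero continuation stays in $H^\omega$ and makes $F$ constant outside $[t-h_1,t+h_2]$, which is precisely what forces $\|f\|_{C([a,b],X)}=\frac{h_1+h_2}{2}\|\Delta^H_{h_1,h_2}f(t)\|_X$.

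For \eqref{e} there is a second, more concrete error: you start from $g(u)=\omega(|u-t|)$, which has its \emph{minimum} (equal to zero) at $t$, so $\DerH f(t)=g(t)\cdot x=\theta$ and the left-hand side of \eqref{e} vanishes --- this $f$ is as far from extremal as possible. The correct choice (the paper's) is the reflected, truncated cap $g(s)=(\omega(h_2)-\omega(|s-t|))_+$, which lies in $H^\omega([a,b],\RR)$ for arbitrary $\omega$, attains its maximum $\omega(h_2)=\max\{\omega(h_1),\omega(h_2)\}$ at $t$, and already vanishes where it needs to so that the zero continuation is automatic; combined with the antipodal normalization it gives $\|f\|_{C([a,b],X)}=\frac{h_1+h_2}{2}\omega(h_2)-\frac{I(h_1)+I(h_2)}{2}$ and hence equality in \eqref{e}.
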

\begin{proof}
Inequalities~\eqref{d} and~\eqref{e} follow from~\eqref{b} and~\eqref{c}, since 
$$
\|\Delta_{h_1,h_2}f(t)\|_X\le \frac 2{h_1+h_2}\| f\|_{C([a,b],X)}.
$$
We prove their sharpness under above conditions on the numbers $t,\gamma_1,\gamma_2$, $h_1$ and $h_2$.

Let, for definiteness, $t\le (a+b)/2$, and hence $h_2\geq h_1$. For inequality~\eqref{d} as a function $g$ we take the non-negative extremal function in Theorem~\ref{th::ostrowskiInequality} for the segments $[t-\gamma_1,t+\gamma_2]$ and $[t-h_1,t+h_2]$ and $X=\RR$  such that $g(t+h_2)=0$. Continue it to the segment $[a,b]$ setting $g(u) = 0$, $u\notin [t-h_1,t+h_2]$. For inequality~\eqref{e} we take $g(s)=(\omega(h_2)-\omega(|s-t|))_+$, $s\in [a,b]$. Both functions belong to $H^\omega([a,b],\RR)$ (the first one in the case of concave $\omega$) and are non-negative on $[a,b]$.
Choose $\xi\in [t-h_1,t+h_2]$ so that 
$
\int^\xi_{t-h_1}g(u)du=\int_\xi^{t+h_2}g(u)du.
$
The function
\begin{equation}\label{segmentExtremalFunc}
   f(u)=\left(\int_\xi^{u}g(s)ds\right)_x,\; x\in X^{\rm c}\cap X^{\rm inv}, \;\| x\|_X=1 
\end{equation}
is extremal. The theorem is proved. \end{proof}

Note, that for the  extremal in inequality~\eqref{e} function one has
\begin{equation}\label{segmentExtremalFuncNorm}
 \| f\|_{C([a,b],X)}=\frac 12\int_{t-h_1}^{t+h_2}[\omega(h_2)-\omega(|s-t|)]ds=\frac{h_1+h_2}2\omega(h_2)-\frac {I(h_1)+I(h_2)}{2}.   
\end{equation}

\subsection{Approximation of operators by the ones with smaller norms}
In the space $C([a,b],X)$ consider the cone $C^H([a,b],X)$, that consists of functions $f$ such that for all $t\in [a,b]$ and $\gamma_1,\gamma_2>0$ such that $[t-\gamma_1,t+\gamma_2]\subset [a,b]$, the difference $\Delta^H_{\gamma_1,\gamma_2} f(t)$  exists. We call a positively homogeneous operator $T\colon C^H([a,b],X) \to X$ bounded, if
$$
\| T\|=\sup\{\| Tf\|_X\; :\; f\in C^H([a,b],X),\;\| f\|_{C([a,b],X)}\le 1\}<\infty .
$$
Assume that an operator  $A\colon \overline{W}^1H^\omega ([a,b],X)\to X$, a number  $N>0$ and an operator  $T\colon C^H([a,b],X)\to X$ such that $\| T\|\le N$ are given. Set
$$
U(A,T)=\sup\limits_{f\in W^1H^\omega([a,b],X)}h_X(Af,Tf).
$$
The quantity 
$$
E(A,N)=\inf\limits_{\| T\|\le N}U(A,T)
$$
is called the best approximation of the operator $A$ by operators $T$ with $\|T\|\le N$. It is clear, that if $A$ is defined on $C^H([a,b],X)$, is bounded, and $N\ge \| A\|$, then  $E(A,N)=0.$ 

 For $t\in [a,b]$ denote by $\Delta_{\gamma_1,\gamma_2}(t)$ and $\DerH(t)$ the operators that act by the formulae 
$$
\Delta_{\gamma_1,\gamma_2}(t)f=\Delta^H_{\gamma_1,\gamma_2}f(t) \text{ and } \DerH(t)f=\DerH f(t).
$$
\begin{theorem}
Let $\omega$ be a modulus of continuity, $X$ be an isotropic $L$-space, $t\in [a,b]$, and numbers $h>\gamma>0$ be given. Let also numbers $\gamma_1,\gamma_2,h_1,h_2$ be defined by~\eqref{restrictionsOnH}. If $\omega$ is concave, then
\begin{equation}\label{k}
  E\left(\Delta_{\gamma_1,\gamma_2}(t),\frac 2{h_1+h_2}\right)=U\left(\Delta_{\gamma_1,\gamma_2}(t),\Delta_{h_1,h_2}(t)\right)= K(\gamma_1,\gamma_2;h_1,h_2),
\end{equation}
and for arbitrary $\omega$
\begin{equation}\label{l}
  E\left(\DerH(t),\frac 2{h_1+h_2}\right)=U\left(\DerH(t),\Delta_{h_1,h_2}(t)\right)= \frac {I(h_1)+I(h_2)}{h_1+h_2}.  
\end{equation}
\end{theorem}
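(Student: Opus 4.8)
The plan is to establish \eqref{k} and \eqref{l} by the standard Stechkin sandwich. Write $A$ for either $\Delta_{\gamma_1,\gamma_2}(t)$ (in \eqref{k}) or $\DerH(t)$ (in \eqref{l}), put $N=2/(h_1+h_2)$, and let $V$ denote the value claimed in the corresponding formula, i.e. $V=K(\gamma_1,\gamma_2;h_1,h_2)$ for \eqref{k} and $V=(I(h_1)+I(h_2))/(h_1+h_2)$ for \eqref{l}. The goal is to show $V\le E(A,N)\le U(A,\Delta_{h_1,h_2}(t))\le V$, which yields all three equalities simultaneously. For the upper bound, the operator $T:=\Delta_{h_1,h_2}(t)$ is positively homogeneous, and for $f\in C^H([a,b],X)$, using isotropy and the triangle inequality, $\|\Delta^H_{h_1,h_2}f(t)\|_X=(h_1+h_2)^{-1}h_X(f(t+h_2),f(t-h_1))\le N\|f\|_{C([a,b],X)}$, so $\|T\|\le N$ and $T$ is admissible. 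Inequality~\eqref{a} (valid since $\omega$ is concave) in case \eqref{k}, resp.~\eqref{a'} (valid for arbitrary $\omega$) in case \eqref{l}, gives $U(A,T)\le V$; hence $E(A,N)\le U(A,\Delta_{h_1,h_2}(t))\le V$.

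For the lower bound I would use the two ``antipodal'' extremal functions. Let $g$ be the real-valued, non-negative function from the proof of sharpness of~\eqref{d} in Theorem~\ref{th::landauIneq} in case \eqref{k} (built from the Ostrowski extremal of Theorem~\ref{th::ostrowskiInequality} for $[t-\gamma_1,t+\gamma_2]\subset[t-h_1,t+h_2]$, extended by $g\equiv 0$ outside $[t-h_1,t+h_2]$), and let $g(s)=(\omega(h_2)-\omega(|s-t|))_+$ in case \eqref{l}. In either case $g\in H^\omega([a,b],\RR)$, $g\ge 0$, and $\|g\|_{\omega,\RR}=1$. Fix $x\in X^{\rm c}\cap X^{\rm inv}$ with $\|x\|_X=1$, pick $\xi$ with $\int_{t-h_1}^{\xi}g=\int_{\xi}^{t+h_2}g$, and define $F_+,F_-$ as in~\eqref{segmentExtremalFunc} with multipliers $x$ and $x'$ respectively. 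Since $g\ge 0$, one has $g_x=g\cdot x\in H^\omega([a,b],X)$ by Lemma~\ref{l::LspaceValuedFunc}, and a short computation using $x+x'=\theta$ gives $F_+(u)=(\int_a^{\xi}g)\cdot x'+\int_a^u g_x(s)\,ds$; since $(\int_a^\xi g)x'\in X^{\rm c}$, we obtain $F_+\in W^1H^\omega([a,b],X)$ with $\DerH F_+=g\cdot x$ and $\|F_+\|_{C([a,b],X)}=\tfrac12\int_{t-h_1}^{t+h_2}g=:\delta$; likewise $F_-\in W^1H^\omega([a,b],X)$, $\DerH F_-=g\cdot x'$, $\|F_-\|_{C([a,b],X)}=\delta$. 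Evaluating $A$ and using that $g$ was chosen extremal for Theorem~\ref{th::landauIneq} — so that $(\gamma_1+\gamma_2)^{-1}\int_{t-\gamma_1}^{t+\gamma_2}g=V+N\delta$ in case \eqref{k}, and $g(t)=\omega(h_2)$ with $N\delta=\omega(h_2)-V$ by~\eqref{segmentExtremalFuncNorm} in case \eqref{l} — one gets, in both cases,
$$
AF_+=(V+N\delta)\cdot x,\qquad AF_-=(V+N\delta)\cdot x',
$$
hence $h_X(AF_+,AF_-)=(V+N\delta)\,h_X(x,x')=2(V+N\delta)$ by Lemma~\ref{l::distBetweenInverseElems}.

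To close the argument, let $T$ be any admissible operator, $\|T\|\le N$. Then $\|TF_\pm\|_X\le N\|F_\pm\|_{C([a,b],X)}=N\delta$, so $h_X(TF_+,TF_-)\le\|TF_+\|_X+\|TF_-\|_X\le 2N\delta$, and two applications of the triangle inequality give
$$
U(A,T)\ge\tfrac12\bigl(h_X(AF_+,TF_+)+h_X(AF_-,TF_-)\bigr)\ge\tfrac12\bigl(h_X(AF_+,AF_-)-h_X(TF_+,TF_-)\bigr)\ge V.
$$
Taking the infimum over admissible $T$ gives $E(A,N)\ge V$, and together with the upper bound this forces equalities throughout, proving \eqref{k} and \eqref{l}.

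The essential point is the Stechkin-type lower bound realised by the \emph{pair} $F_+,F_-$ produced from one and the same real extremal $g$ by multiplication by $x$ and by its inverse $x'$: Lemma~\ref{l::distBetweenInverseElems} makes $h_X(AF_+,AF_-)=2(V+N\delta)$ as large as possible, so that the loss $2N\delta$ arising from the mere bound $\|T\|\le N$ (no Lipschitz hypothesis on $T$) is compensated exactly; this is where I expect the main care to be needed. The more routine verifications are: $F_\pm\in W^1H^\omega([a,b],X)$ via the decomposition $F_+=(\int_a^\xi g)x'+\int_a^{\cdot}g_x$ (legitimate precisely because $g\ge 0$, whence $g_x=g\cdot x\in H^\omega$), the bound $\|\Delta_{h_1,h_2}(t)\|\le N$, and matching the constants to those already recorded in the proof of Theorem~\ref{th::landauIneq} and in~\eqref{segmentExtremalFuncNorm}.
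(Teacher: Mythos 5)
Your proof is correct and has the same overall structure as the paper's: the upper bound, via $\|\Delta_{h_1,h_2}(t)\|\le 2/(h_1+h_2)$ together with~\eqref{a} and~\eqref{a'}, is identical. The only divergence is in the lower bound. The paper simply reuses the already-established sharpness of the Landau inequalities~\eqref{d},~\eqref{e}: there is a single $f_1\in W^1H^\omega([a,b],X)$ with $\| Af_1\|_X = V + N\| f_1\|_{C([a,b],X)}$ (equation~\eqref{alpha}), and for any admissible $T$ the triangle inequality through $\theta$ gives $U(A,T)\ge h_X(Af_1,Tf_1)\ge \| Af_1\|_X-\| Tf_1\|_X\ge V$ --- a one-line contradiction. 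You instead build the antipodal pair $F_{+}=(\int_\xi^{\cdot} g)_{x}$, $F_{-}=(\int_\xi^{\cdot} g)_{x'}$ from the same real extremal $g$ and run the two-point diameter argument, using Lemma~\ref{l::distBetweenInverseElems} to get $h_X(AF_+,AF_-)=2(V+N\delta)$. Both arguments are valid and rest on exactly the same extremal $g$ and the same identity $V+N\delta=(\gamma_1+\gamma_2)^{-1}\int_{t-\gamma_1}^{t+\gamma_2}g$ (resp.\ $=\omega(h_2)$ via~\eqref{segmentExtremalFuncNorm}), which the paper has already recorded in the proofs of Theorems~\ref{1stLandauIn} and~\ref{th::landauIneq}; your route just pays for an extra verification ($AF_-=(V+N\delta)\cdot x'$ and $h_X(x,x')=2$) that the single-function contradiction avoids.
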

\begin{proof}  It is clear that $\| \Delta^H_{h_1,h_2}\|\le \frac 2{h_1+h_2}$. Due to~\eqref{a} and~\eqref{a'} we have
$$
E\left( \Delta_{\gamma_1,\gamma_2}(t),\frac 2{h_1+h_2}\right)\le U(\Delta_{\gamma_1,\gamma_2}(t),\Delta_{h_1,h_2}(t))\le K(\gamma_1,\gamma_2;h_1,h_2)
$$
and
$$
E\left(\DerH(t),\frac 2{h_1+h_2}\right)\le U(\DerH(t),\Delta_{h_1,h_2}(t))\le \frac {I(h_1)+I(h_2)}{h_1+h_2}.
$$
 We also proved that there exist functions $f_1,f_2\in W^1H^\omega([a,b],X)$ such that 
\begin{equation}\label{alpha}
 \| \Delta^H_{\gamma_1,\gamma_2}f_1(t)\|_X= K(\gamma_1,\gamma_2; h_1,h_2)+\frac 2{h_1+h_2}\| f_1\|_{C([a,b],X)}
\end{equation}
and
$$
\| \DerH f_2(t)\|_X=\frac {I(h_1)+I(h_2)}{h_1+h_2} +\frac 2{h_1+h_2}\| f_2\|_{C([a,b],X)}.
$$
To prove~\eqref{k}, assume there exists an operator  $T$, $\| T\|\le\frac 2{h_1+h_2}$ such that
$$
U(\Delta_{\gamma_1,\gamma_2}(t),T)< K(\gamma_1,\gamma_2;h_1,h_2).
$$
Then for the function  $f_1$ we get a strict inequality
$$
\| \Delta^H_{\gamma_1,\gamma_2}f_1(t)\|_X< K(\gamma_1,\gamma_2; h_1,h_2)+\frac 2{h_1+h_2}\| f_1\|_{C([a,b],X)}, 
$$
which contradicts to~\eqref{alpha}. Equality~\eqref{l} can be proved similarly.  \end{proof}

\subsection{Recovery of an operator given inexact data}
Finally, we consider the problem of optimal recovery of an operator  $A$ on the elements of the class $W^1H^\omega ([a,b],X)$ known with error. For an operator $A$, bounded operator  $T$ and a number $\delta>0$ set
$$
U_\delta (A,T)=\sup\{ h_X(Af,Tg)\colon f\in W^1H^\omega ([a,b],X), g\in C([a,b],X), h_{C([a,b],X)}(f,g)\le \delta \}.
$$
The problem is to find the quantity
$$
\mathcal{E}_\delta(A)=\inf\nolimits_TU_\delta(A,T)
$$
and the operator $T^*$ on which the infimum in the right-hand side of the equality is attained. 
\begin{theorem}
Let $\omega$ be a modulus of continuity,  $t\in [a,b]$, $h>0$, and $\DerH(t) f=\DerH f(t)$ for $f\in W^1H^\omega ([a,b],X)$. If the numbers $h_1,h_2$ are defined by~\eqref{restrictionsOnh}, and 
$$ \delta =\frac {h_1+h_2}2\max\{\omega(h_1), \omega(h_2)\}-\frac {I(h_1)+I(h_2)}2,
$$
then for the operator $\Delta_{h_1,h_2}(t)f=\Delta^H_{h_1,h_2}f(t)$ we have
$$
\mathcal{E}_\delta(\DerH(t))=U_\delta(\DerH(t),\Delta_{h_1,h_2}(t)) = \max\{\omega(h_1), \omega(h_2)\}.
$$
\end{theorem}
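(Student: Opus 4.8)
The plan is to establish the theorem by proving the matching upper and lower bounds for $\mathcal E_\delta(\DerH(t))$, with the upper bound coming from the choice $T=\Delta_{h_1,h_2}(t)$ and the lower bound from a suitable pair of extremal elements. First I would record the triangle-type inequality that controls $U_\delta$ for this particular $T$: for $f\in W^1H^\omega([a,b],X)$ and $g\in C([a,b],X)$ with $h_{C([a,b],X)}(f,g)\le\delta$,
\begin{equation*}
h_X(\DerH f(t),\Delta^H_{h_1,h_2}g(t))\le h_X(\DerH f(t),\Delta^H_{h_1,h_2}f(t))+h_X(\Delta^H_{h_1,h_2}f(t),\Delta^H_{h_1,h_2}g(t)).
\end{equation*}
The first term is bounded by $\frac{I(h_1)+I(h_2)}{h_1+h_2}\|\DerH f\|_{\omega,X}\le\frac{I(h_1)+I(h_2)}{h_1+h_2}$ via~\eqref{a'}, using that $f\in W^1H^\omega$ means $\|\DerH f\|_{\omega,X}\le 1$. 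The second term is bounded using the isotropy/translation axiom~\eqref{ax::LSemiIsotropic} and linearity of the integral by $\frac{2}{h_1+h_2}\,h_{C([a,b],X)}(f,g)\le\frac{2\delta}{h_1+h_2}$. Adding and substituting the stated value of $\delta$ should collapse the bound to exactly $\max\{\omega(h_1),\omega(h_2)\}$; this is where I expect a short but slightly fiddly computation, since one must check that $\frac{I(h_1)+I(h_2)}{h_1+h_2}+\frac{2\delta}{h_1+h_2}=\max\{\omega(h_1),\omega(h_2)\}$, which is precisely the identity built into the definition of $\delta$. This gives $\mathcal E_\delta(\DerH(t))\le U_\delta(\DerH(t),\Delta_{h_1,h_2}(t))\le\max\{\omega(h_1),\omega(h_2)\}$.

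For the lower bound, the strategy is the standard one for optimal recovery with inexact data: exhibit a single element $f^*\in W^1H^\omega([a,b],X)$ together with two admissible "data" functions $g_\pm\in C([a,b],X)$ both within $\delta$ of something, whose images under any candidate operator $T$ cannot simultaneously be close to $\DerH f^*(t)$ and $-\DerH f^*(t)$. Concretely, I would take the real-valued extremal $g(s)=(\omega(h_2)-\omega(|s-t|))_+$ from the proof of Theorem~\ref{th::landauIneq} (assuming $h_2\ge h_1$ for definiteness), form $f^*(u)=\big(\int_\xi^u g(s)\,ds\big)_x$ as in~\eqref{segmentExtremalFunc} with $x\in X^{\rm c}\cap X^{\rm inv}$, $\|x\|_X=1$, so that $\DerH f^*(t)=(g(t))_x=\omega(h_2)\cdot x$, and note from~\eqref{segmentExtremalFuncNorm} that $\|f^*\|_{C([a,b],X)}=\frac{h_1+h_2}{2}\omega(h_2)-\frac{I(h_1)+I(h_2)}{2}$. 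Then $g^*\equiv\theta$ satisfies $h_{C([a,b],X)}(f^*,g^*)=\|f^*\|_{C([a,b],X)}$, and one checks this equals $\delta$ (when $h_2\ge h_1$ the max is $\omega(h_2)$, so this is exactly the stated $\delta$). Considering the two elements $\pm f^*$ of $W^1H^\omega$ together with the common data function $g^*=\theta$, any operator $T$ gives $T g^*$ a fixed value, and by Lemma~\ref{l::errorOfRecoveryFromBelow}-type reasoning
\begin{equation*}
U_\delta(\DerH(t),T)\ge\tfrac12 h_X(\DerH f^*(t),\DerH(-f^*)(t))=\tfrac12 h_X(\omega(h_2)x,-\omega(h_2)x)=\omega(h_2)\cdot h_X(x,\theta)=\omega(h_2),
\end{equation*}
using Lemma~\ref{l::distBetweenInverseElems} and isotropy. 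Since this holds for every $T$, $\mathcal E_\delta(\DerH(t))\ge\omega(h_2)=\max\{\omega(h_1),\omega(h_2)\}$, and combined with the upper bound the theorem follows, with $T^*=\Delta_{h_1,h_2}(t)$.

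The main obstacle I anticipate is not conceptual but bookkeeping: verifying that the prescribed $\delta$ is simultaneously (i) large enough that the extremal $f^*$ with $g^*=\theta$ is an admissible pair realizing the lower bound $\omega(h_2)$, and (ii) small enough that the upper-bound computation yields exactly $\omega(h_2)$ rather than something larger — i.e. that the two bounds genuinely meet. Both hinge on the single algebraic identity relating $\delta$, $I(h_1)$, $I(h_2)$ and $\max\{\omega(h_1),\omega(h_2)\}$, so I would isolate and prove that identity once (treating the cases $h_1\le h_2$ and $h_1\ge h_2$ symmetrically) and then invoke it in both halves of the argument. A minor additional point to check is that $-f^*\in W^1H^\omega([a,b],X)$: since $f^*(u)=x+\int_a^u\phi(s)\,ds$ requires $\DerH(-f^*)$ to make sense, one uses that $x\in X^{\rm inv}$ and that $(-g(s))_x=(g(s))_{x'}$, so $-f^*$ is again of the required form with leading convex element $x'\in X^{\rm c}$; this is where the standing assumption $X^{\rm inv}\cap X^{\rm c}\neq\{\theta\}$ and isotropy are used.
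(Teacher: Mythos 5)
Your proof is correct and follows essentially the same route as the paper: the identical triangle-inequality upper bound via~\eqref{a'} and the identity $\frac{I(h_1)+I(h_2)}{h_1+h_2}+\frac{2\delta}{h_1+h_2}=\max\{\omega(h_1),\omega(h_2)\}$, and a lower bound built from the extremal function~\eqref{segmentExtremalFunc} whose $C$-norm equals $\delta$ by~\eqref{segmentExtremalFuncNorm}. The only (harmless) divergence is in the lower bound: the paper uses the single function $f$ with data $\theta$ (implicitly relying on $T\theta=\theta$ for positively homogeneous $T$), whereas you symmetrize with the pair $f^*$, $(f^*)$ replaced by its $x'$-counterpart and invoke the two-element averaging argument of Lemma~\ref{l::errorOfRecoveryFromBelow}; both yield $\omega(h_2)=\max\{\omega(h_1),\omega(h_2)\}$.
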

\begin{proof}
For each $f\in W^1H^\omega ([a,b],X)$, $g\in C([a,b],X)$ such that $h_{C([a,b],X)}(f,g)\leq \delta$, due to~\eqref{a'},
$$
h_X(\DerH f(t) , \Delta^H_{h_1,h_2}g(t))\le h_X(\DerH f(t) , \Delta^H_{h_1,h_2}f(t))+h_X(\Delta^H_{h_1,h_2}f(t) , \Delta^H_{h_1,h_2}g(t))
$$
$$
\le \frac {I(h_1)+I(h_2)}{h_1+h_2} +\frac 2{h_1+h_2}\delta = \max\{\omega(h_1), \omega(h_2)\}.
$$
Hence,
$
\mathcal{E}_\delta(\DerH(t))\le\max\{\omega(h_1), \omega(h_2)\}.
$
On the other hand, for the function $f$ defined by~\eqref{segmentExtremalFunc}, due to~\eqref{segmentExtremalFuncNorm},
$$
\mathcal{E}_\delta(\DerH(t))\ge\| \DerH f(t)\|_X= \frac {I(h_1)+I(h_2) }{h_1+h_2}+\frac 2{h_1+h_2}\| f\|_{C([a,b],X)}=\max\{\omega(h_1), \omega(h_2)\}
$$
and the theorem is proved. \end{proof}

\bibliographystyle{plain}
\bibliography{bibliography}

\end{document}